\documentclass[12pt,a4paper]{amsart}

\usepackage[
    margin=1in,          
    footskip=0.5in,      
    hmarginratio=1:1     
]{geometry}

\usepackage{graphicx}
\usepackage{amsmath,amssymb}
\usepackage{amsthm}
\usepackage{bbm}
\usepackage{hyperref}
\usepackage{tikz}
\usetikzlibrary{patterns}
\usepackage{enumitem}
\usepackage[capitalize,nameinlink,noabbrev,nosort]{cleveref}
\usepackage{subcaption}

\theoremstyle{plain}
\newtheorem{theorem}{Theorem}[section]
\newtheorem{proposition}[theorem]{Proposition}
\newtheorem{lemma}[theorem]{Lemma}
\newtheorem{corollary}[theorem]{Corollary}
\newtheorem{question}[theorem]{Question}

\theoremstyle{definition}
\newtheorem{definition}[theorem]{Definition}
\newtheorem{conjecture}[theorem]{Conjecture}

\theoremstyle{remark}
\newtheorem{remark}[theorem]{Remark}

\newcommand{\iden}{\mathbbm{1}}
\newcommand{\qbinom}[2]{\genfrac{[}{]}{0pt}{}{#1}{#2}_q}
\DeclareMathOperator{\PD}{PD}
\DeclareMathOperator{\pd}{pd}
\DeclareMathOperator{\PSD}{PSD}
\DeclareMathOperator{\psd}{psd}

\title[Positive definite, psd and totally positive matrices over finite fields]
{Positive definite, positive semidefinite and totally positive matrices over finite fields}
\author{Arvind Ayyer}
\address{Arvind Ayyer, Department of Mathematics, 
Indian Institute of Science, Bangalore  560012, India.}
\email{arvind@iisc.ac.in}

\author{Shubhanshu Prasad}
\address{Shubhanshu Prasad, Department of Mathematics, 
Indian Institute of Science, Bangalore  560012, India.}
\email{shubhanshup@iisc.ac.in}

\date{\today}

\begin{document}

\begin{abstract}
Motivated by the equivalent definitions of positive definite (resp. positive semidefinite) matrices over real and complex fields, we give four (resp. five) inequivalent definitions for these matrices over finite fields. Our starting point is the recent definition due to Cooper--Hanna--Whitlatch (RMJ. Math., 2024) of positive elements in finite fields.
We also use this definition to study totally positive matrices over finite fields.
For all of these cases, we give explicit enumeration formulae or give bounds. 
Most of our formulas are new, but we summarize results from the existing literature for completeness.
For positive semidefinite matrices of type 5 and totally positive matrices, we give structural formulas using the rationality of the Weil zeta function, i.e. Dwork's theorem, and conjecture a quasipolynomial-type formula.
\end{abstract}

\subjclass[2020]{15B33, 15B48, 05A05, 05A10}
\keywords{Positive definite matrix, positive semidefinite matrix, totally positive matrix, finite fields, positive elements, definite field}

\maketitle

\section{Introduction}

Positive definite and positive semidefinite matrices are important in many areas of mathematics. For example, in optimization, they characterize convexity through the Hessian and form the foundation of semidefinite programming~\cite{VandenbergheBoyd1996}. 
In probability and statistics, positive semidefinite matrices arise as covariance matrices of random vectors~\cite{Anderson2003}. 
In functional analysis and operator theory, they appear as positive operators on Hilbert spaces~\cite{Conway1990}. 
In graph theory, Laplacian matrices are basic examples of positive semidefinite matrices~\cite{Chung1997}. 

While these matrices have been extensively studied over $\mathbb{R}$ and $\mathbb{C}$, there have been relatively few attempts to study them over finite fields, primarily because there is no canonical notion of positive elements in finite fields.
One natural notion of a positive element in a field is one which is the square of a nonzero element; this was made explicit in \cite{CooperHannaWhitlatch2022}.
The set of such nonzero elements is precisely the subgroup of quadratic residues modulo a prime power and is a classical subject in number theory. It is intimately related to the quadratic (Dirichlet) character, an important concept in modern number theory~\cite{baker-2012}.
There are many other applications of this idea.
For example, this notion of positive elements is implicitly used
in the definition of Paley graphs, which are highly symmetric graphs~\cite{brouwer-haemers-2012}. It is used in the theory of quadratic residue codes, which are an important class of error-correcting codes~\cite[Chapter 16]{macwilliams-sloane-1977}. It also has applications in cryptography and primality testing.

In a field of characteristic two, every nonzero element is positive.
In this case, one of the earliest definitions of positive definite matrices is due to Albert \cite{Albert1938}.
His result is given in \cref{sec:posdef_3}.

One of the aims in this article is to investigate how the equivalent definitions in real and complex fields (see \cref{thm:posdef RC}) generalizes to arbitrary finite fields.
We study four such inequivalent notions of positive definiteness in \cref{sec:posdef}.

More precisely, we study these in \cref{sec:posdef_1,sec:posdef 2,sec:posdef_3,sec:posdef_4},
calling them positive definite matrices of types 1 through 4 respectively.
In each case, we enumerate the number of positive definite matrices of that type.
We note that some of these results were already in the literature.
For instance, the number of positive definite matrices of types 1 and 3 are given in \cite{Vishwakarma2025} and \cite{Macwilliams1969} respectively, which we summarize for completeness.

We give new formulas for the number of positive definite matrices of types 2 and 4.
We then study the relationship between these classes of matrices in \cref{sec:rel posdef}. 
In particular, we draw Venn diagrams showing the intersections of each of these types.

We then move on to the study of positive semidefinite (psd) matrices over arbitrary finite fields generalizing the
equivalent conditions in the real and complex case in \cref{sec:psd}; see \cref{thm:psd RC}.
We study these in \cref{sec:psd 1,sec:psd 2,sec:psd 3,sec:psd 4,sec:psd 5},
calling them psd matrices of types 1 through 5 respectively. 
For psd matrices, the number of matrices of type 3 was previously determined by MacWilliams for $q$ even, and for nonsingular matrices of type 3 when $q$ is odd~\cite{Macwilliams1969}. We give new formulas for the number of psd matrices of types 1 and 4, as well as for the number of singular matrices of type 3 when $q$ is odd. We give nontrivial upper bounds for the number of psd matrices
of types 2 and 5.
Using the rationality of the Weil zeta function over finite fields, we give a structural formula for the number of psd
matrices of type 5, and conjecture that the number has a 
quasipolynomial-type behaviour.

We again study the relationship between these different definitions of positive semidefinite matrices in \cref{sec:rel psd}.

Totally positive matrices and their cousins, totally nonnegative matrices, form another important class of matrices over the real field and have a long history beginning with the work of Polya~\cite[Chapter 18]{marshall-olkin-1979}. 
They appear in a variety of areas such as systems of differential equations, approximation theory, probability, combinatorics, representation theory and algebraic geometry.

Since now we have a notion of positivity over finite fields, it is natural to study totally positive matrices in this setting; see \cref{def:tp}. 

In \cref{sec:TP}, we study some properties of the number of totally positive matrices over finite fields. We 
obtain exact formulas for the number of totally positive matrices for small order and prove a structural formula for this number again using the rationality of the Weil zeta function over finite fields.
Based on numerical experiments, we conjecture that this number has a quasipolynomial-type behaviour.

We begin with the preliminaries in \cref{sec:prelim}.

\section{Preliminaries}
\label{sec:prelim}

\subsection{Notation}

Throughout, for $m,n\in\mathbb{N}$ with $m<n$, we use $[m,n]$ to denote the set of integers from $m$ to $n$, and $[n]$ to denote the set $\{1,2,\dots,n\}$. 
The group of all the permutation of $[n]$ is denoted by $S_n$. 
We will use $p$ to denote a prime number and $q$ to denote a prime power unless stated otherwise. 
The set of all the matrices of order $m\times n$ over a field $\mathbb{F}$ is denoted by $M_{m,n}(\mathbb{F})$. When $m=n$, we simply denote this set by $M_n(\mathbb{F})$. The set of all the invertible matrices in $M_n(\mathbb{F})$ is denoted by $\mathrm{GL}_n(\mathbb{F})$. For $A\in M_n(\mathbb{F})$, $\det(A)$ denotes the determinant of $A$. For $A\in M_{m,n}(\mathbb{F})$, $A^T$ denotes the transpose of $A$. For $A\in M_{m,n}(\mathbb{F})$, $I\subseteq [m]$, and $J\subseteq [n]$, let $A_{I,J}$ denote the matrix obtained by taking the rows corresponding to $I$ and the columns corresponding to $J$. When $I=J$, we simply write $A_I$ instead of $A_{I,I}$. When $v\in \mathbb{F}_q^n$ and $I\subseteq [n]$, we use $v_I$ to denote the subvector of $v$ corresponding to the indices in $I$. The identity matrix in $M_n(\mathbb{F})$ is denoted by $\iden_n$ 
and the zero matrix in $M_{m,n}(\mathbb{F})$ is denoted by $\mathbf{0}_{m \times n}$.

\subsection{Positive definite and semidefinite matrices}

\begin{definition}
\label{def:pd psd}
For a positive integer $n$, a Hermitian matrix of order $n\times n$ over $\mathbb{C}$ is said to be \textit{positive definite} (respectively, \textit{positive semidefinite}) if for every nonzero vector $x \in \mathbb{C}^n$, $x^*Ax>0$ (respectively $x^*A x\geq0$).
\end{definition}

Over the complex field, Hermitian matrices admit several equivalent characterizations of positive definiteness, summarized in the following theorem.

\begin{theorem}[{\cite[Chapter 7]{HornJohnson2012}}]
\label{thm:posdef RC}
Let $ A \in \mathrm{GL}_n(\mathbb{C}) $ be a Hermitian matrix. The following statements are equivalent:
\begin{itemize}
\item[(a)] $A$ is positive definite.

\item[(b)] All the leading principal minors of $A$ are positive, that is, for every $m\in [n]$, $\det (A_{[m]})$ is positive.

\item[(c)] There exists a unique lower triangular matrix $L$ with positive diagonal entries such that $A = LL^*$.

\item[(d)] $A$ is a Gram matrix; that is, there exists a matrix $B \in M_{n \times k}(\mathbb{C})$ such that $A = B^*B$.

\item[(e)] $A$ is unitarily diagonalizable with strictly positive diagonal entries.

\end{itemize}
\end{theorem}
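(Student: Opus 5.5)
The plan is to prove the equivalences in a cycle, (a) $\Rightarrow$ (e) $\Rightarrow$ (d) $\Rightarrow$ (a), and then attach (b) and (c) through Cholesky factorization, (a) $\Rightarrow$ (c) $\Rightarrow$ (b) $\Rightarrow$ (a).

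\emph{The cycle through (e) and (d).} By the spectral theorem, a Hermitian $A$ can be written $A=UDU^*$ with $U$ unitary and $D$ real diagonal.
\begin{itemize}
\item[(a) $\Rightarrow$ (e):] If $A$ is positive definite, take $x=Ue_i$. Then $x^*Ax=d_i>0$.
\item[(e) $\Rightarrow$ (d):] Set $B=D^{1/2}U^*$. Then $A=B^*B$.
\item[(d) $\Rightarrow$ (a):] If $A=B^*B$, then $x^*Ax=\|Bx\|^2\ge 0$. Equality forces $Bx=0$, hence $Ax=0$. Since $A\in\mathrm{GL}_n(\mathbb{C})$, this gives $x=0$.
\end{itemize}
Note that this last step is exactly where the invertibility hypothesis in the statement is used.

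\emph{Attaching (c) and (b).} The step (c) $\Rightarrow$ (b) is direct. If $A=LL^*$ with $L$ lower triangular, then $A_{[m]}=L_{[m]}L_{[m]}^*$. Hence $\det A_{[m]}=\prod_{i\le m}|l_{ii}|^2>0$.

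For (a) $\Rightarrow$ (c) I would induct on $n$. Write
\[
A=\begin{pmatrix} a & v^* \\ v & A' \end{pmatrix}.
\]
Here $a=e_1^*Ae_1>0$. The Schur complement $S=A'-vv^*/a$ is positive definite, because $y^*Sy=x^*Ax$ for $x=(-v^*y/a,\,y)$, and this vector is nonzero whenever $y$ is. Set $l_{11}=\sqrt a$, take the first column below the diagonal to be $v/\sqrt a$, and apply induction to $S$. Uniqueness also goes by induction. If $LL^*=MM^*$, then $M^{-1}L=M^*(L^*)^{-1}$ is both lower and upper triangular, so it is diagonal; call it $\Delta$. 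Then $\Delta\Delta^*=\iden_n$ and $\Delta$ has positive diagonal entries, so $\Delta=\iden_n$.

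\emph{The main obstacle: (b) $\Rightarrow$ (a).} I would handle this by induction using the same block decomposition, now splitting off the last row and column. Assume the leading block $A_{[n-1]}$ is positive definite, by induction, since its leading principal minors are among those of $A$. Then block Gaussian elimination gives
\[
A=P^*\,\mathrm{diag}(A_{[n-1]},s)\,P,
\]
where $P$ is unipotent upper triangular and $s=\det A/\det A_{[n-1]}>0$. Since $P$ is invertible, $A$ is congruent to a positive definite block diagonal matrix, and therefore $A$ is positive definite. The care needed is in verifying that the congruence preserves definiteness, which follows from $x^*Ax=(Px)^*\,\mathrm{diag}(A_{[n-1]},s)\,(Px)$ together with $Px\neq 0$ for $x\neq 0$.
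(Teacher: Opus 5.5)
Your proof is correct and complete. The two cycles cover all five conditions, the identity $y^*Sy=x^*Ax$ and the congruence $A=P^*\operatorname{diag}(A_{[n-1]},s)P$ with $s=\det A/\det A_{[n-1]}>0$ both check out, and (d)$\Rightarrow$(a) is indeed the only step that needs $A\in\mathrm{GL}_n(\mathbb{C})$.

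The paper gives no proof of this statement, only the citation to Horn--Johnson. The complex-field argument it does supply, for (a)$\Leftrightarrow$(f) of \cref{thm:psd RC} in \cref{sec:new psd}, is spectral and runs through Cauchy interlacing. Interlacing is also the usual textbook proof of (b)$\Rightarrow$(a): once $A_{[n-1]}$ is positive definite, at most one eigenvalue of $A$ can be nonpositive, and $\det A>0$ excludes it.

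Your Schur-complement route uses eigenvalues only for (e). That matters for this paper, because elimination makes sense over $\mathbb{F}_q$ while interlacing does not. For instance, with $^T$ in place of $^*$, your observation that $M^{-1}L=M^T(L^T)^{-1}$ is diagonal proves \cref{lem:posdef_2 lem indef field} at once. Now $\Delta^2=\iden_n$ only gives $\delta_i=\pm1$, and positivity of $\delta_i=l_{ii}/m_{ii}$ forces $\delta_i=1$ exactly in definite fields; this is the source of the factor $2^n$ in \cref{thm:posdef 2 indef field}. Similarly, in the nonsingular case your $s$ is the quantity $p$ of \cref{lem:psd 1 extn}. Your congruence followed by a square root of $s$ is the one-row Cholesky extension carried out in \cref{thm:psd 1 psd 2 rel}.
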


Recall that the factorization in \cref{thm:posdef RC}(c) is called the
\emph{Cholesky decomposition}. Similar to the characterization in \cref{thm:posdef RC} for positive 
definite matrices, one has the following result for positive semidefinite matrices.

\begin{theorem}
\label{thm:psd RC}
Let $A \in M_n(\mathbb{C})$ be a Hermitian matrix. The following statements are equivalent:
\begin{itemize}
\item[(a)] $A$ is positive semidefinite or psd.

\item[(b)] There exists a unique lower triangular matrix $L\in M_n(\mathbb{C})$ with nonnegative diagonal entries such that $A = LL^*$.

\item[(c)] $A$ is a Gram matrix; that is, there exists a matrix $B \in M_{n \times k}(\mathbb{C})$ such that $A = B^*B$.

\item[(d)] $A$ is unitarily diagonalizable with nonnegative diagonal entries.

\item[(e)] All principal minors of $A$ are nonnegative.

\item[(f)] There exists a subset $K\subseteq[n]$ such that $A_K$ is positive definite, and for every subset $M$ satisfying $K\subseteq M \subseteq[n]$ and $|M|\in\{r+1,r+2\}$, the principal submatrix $A_M$ is singular. 
\end{itemize}
\end{theorem}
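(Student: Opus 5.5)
The plan is to prove the equivalences through a cycle of implications. The two hubs are the spectral theorem, which gives (d), and an inductive Cholesky argument, which gives (b).

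\textbf{The basic cycle (a)$\Rightarrow$(d)$\Rightarrow$(c)$\Rightarrow$(a).}
\begin{itemize}
\item For (a)$\Rightarrow$(d), use that every Hermitian $A$ is unitarily diagonalizable, $A=UDU^*$ with $D$ real diagonal. If $Ue_i$ is an eigenvector, then $(Ue_i)^*A(Ue_i)=d_i\ge 0$.
\item For (d)$\Rightarrow$(c), take $B=D^{1/2}U^*$, so that $B^*B=UDU^*=A$.
\item For (c)$\Rightarrow$(a), compute $x^*B^*Bx=\|Bx\|^2\ge0$.
\end{itemize}

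\textbf{Principal minors, (a)$\Leftrightarrow$(e).}
\begin{itemize}
\item For (a)$\Rightarrow$(e), note that each principal submatrix $A_K$ is psd: restrict $x$ to be supported on $K$. Its determinant is then the product of its nonnegative eigenvalues.
\item For (e)$\Rightarrow$(a), consider $A+\epsilon\iden_n$. Its leading principal minors are $\sum_{K} \epsilon^{m-|K|}\det(A_K)$, summed over $K\subseteq[m]$. This sum is strictly positive for $\epsilon>0$. By \cref{thm:posdef RC}(b), $A+\epsilon\iden_n$ is therefore positive definite, and letting $\epsilon\to0$ gives (a).
\end{itemize}

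\textbf{Cholesky, (b)$\Leftrightarrow$(a).}
\begin{itemize}
\item The direction (b)$\Rightarrow$(c) is immediate, since $A=LL^*$ is a Gram matrix with $B=L^*$.
\item For existence in (a)$\Rightarrow$(b), one route is to apply \cref{thm:posdef RC}(c) to $A+\epsilon\iden_n$, which gives factors $L_\epsilon$. These are bounded, since $\sum_j |(L_\epsilon)_{ij}|^2=a_{ii}+\epsilon$, so by compactness a subsequence converges to some $L$ with nonnegative diagonal and $LL^*=A$.
\item Alternatively, induct on $n$ by writing $A=\begin{pmatrix}a&v^*\\ v&A'\end{pmatrix}$.
  \begin{itemize}
  \item If $a>0$, eliminate using the Schur complement $A'-vv^*/a$, which is psd.
  \item If $a=0$, psd-ness forces $v=0$. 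Take the first column of $L$ to be zero and recurse on $A'$.
  \end{itemize}
\end{itemize}

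I expect uniqueness in (b) to be the main obstacle. When $A$ is singular, uniqueness fails literally. For example, with $A=\mathbf{0}$ in size $2$, take $L$ with $L_{21}=1$, $L_{11}=L_{22}=0$; then $LL^*$ has $(2,2)$ entry $1$, so that $L$ does not work. A better counterexample is
\[
A=\begin{pmatrix}0&0\\0&1\end{pmatrix},
\]
which admits both $L=\begin{pmatrix}0&0\\0&1\end{pmatrix}$ and $L=\begin{pmatrix}0&0\\1&0\end{pmatrix}$. So the statement's ``unique'' has to be read with a normalization. One option is to require that whenever $L_{jj}=0$ the whole $j$th column of $L$ is zero; the induction above produces exactly this canonical choice. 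I would prove uniqueness under that normalization by the same induction, comparing first columns.

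\textbf{Item (f).} Here $r$ is presumably meant to be $|K|=\operatorname{rank}A$, and I would handle it last.
\begin{itemize}
\item For (a)$\Rightarrow$(f), choose $K$ to index a maximal nonsingular principal submatrix. For a Gram matrix, $A_K$ nonsingular means the columns $b_k$, $k\in K$, are independent, and such an independent set can be chosen of size $\operatorname{rank}A$. Then $A_K$ is positive definite, and any larger $A_M$ is a Gram matrix of dependent vectors, hence singular.
\item For (f)$\Rightarrow$(a), use the bordering lemma: if $A_K$ is nonsingular and every $(r+1)$- and $(r+2)$-bordered principal minor vanishes, then $\operatorname{rank}A=r$ (a Kronecker-type principal-rank argument for Hermitian matrices). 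Then $A=C^*A_K^{-1}C$ with $C=A_{K,[n]}$, which is psd because $A_K^{-1}$ is positive definite.
\end{itemize}
This bordering lemma is the other delicate step. I would cite it from Horn--Johnson rather than reprove it.
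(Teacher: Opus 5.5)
Your proposal is correct, and on one point it is more careful than the statement. You are right that ``unique'' in (b) fails for singular $A$. For $A=\operatorname{diag}(0,1)$, both $\operatorname{diag}(0,1)$ and the matrix with a single $1$ in position $(2,1)$ are admissible factors. The standard Cholesky theorem claims uniqueness only for positive definite $A$. Since the paper handles (a)--(e) purely by citing Horn--Johnson, this is an error in the statement, not a gap in your argument. Your normalization (a zero diagonal entry forces a zero column), together with the first-column comparison, does restore uniqueness. Your reading $r=|K|$ in (f) is also the one the paper's appendix uses. Your square choice $B=D^{1/2}U^*$ quietly fixes the shape of $B$ in (c), which should be $k\times n$.

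The real difference is in (a)$\Leftrightarrow$(f), the only part the paper actually proves.

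For (a)$\Rightarrow$(f), the paper uses the converse half of B\^ocher's lemma (\cref{lem:bocher thm}) to find a nonsingular $r\times r$ principal submatrix. It then uses Cauchy interlacing to see that this submatrix is positive definite. Your choice of $K$ as a basis among the Gram vectors gives both the submatrix and its positive definiteness at once, with neither tool.

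For (f)$\Rightarrow$(a), both arguments need the forward half of B\^ocher's lemma to get $\operatorname{rank}A=r$. The paper then applies interlacing repeatedly to show that the $r$ largest eigenvalues of $A$ are at least $\lambda_{\min}(A_K)>0$. You instead use the identity $A=A_{[n],K}A_K^{-1}A_{K,[n]}$, valid for any Hermitian $A$ with $A_K$ nonsingular and $|K|=\operatorname{rank}A$. Your route is essentially algebraic (only the positivity of $A_K^{-1}$ is analytic), whereas interlacing is intrinsically spectral; the paper's argument is shorter given spectral theory.

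For the bordering lemma, the $(r+1)$/$(r+2)$ criterion is what the paper cites from B\^ocher (1922, \S 20). That may be easier to cite than to locate in Horn--Johnson. For (a)--(e), you give standard self-contained arguments (spectral theorem, the $A+\epsilon\iden_n$ perturbation, compactness or Schur-complement induction) where the paper gives only a reference.
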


For the equivalence of \cref{thm:psd RC}(a)-(e), the reader may again refer to \cite[Chapter 7] {HornJohnson2012}. 
The equivalence of \cref{thm:psd RC}(a) and (f) is proved in \cref{sec:new psd}. 
We now give yet another equivalent characterization of psd matrices.

\begin{theorem}
\label{thm:psd princ submatrix}
Let $A\in M_n(\mathbb{C})$ be a symmetric matrix. Then $A$ is positive semidefinite if and only if there exists a subset $K\subseteq[n]$ such that $A_K$ is positive definite and every principal submatrix $A_M$ is singular for all subsets $M\subseteq[n]$ satisfying $|M|>|K|$.
\end{theorem}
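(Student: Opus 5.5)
The plan is to identify $|K|$ with the rank of $A$ and reduce both directions to the eigenvalue and Gram characterizations in \cref{thm:psd RC}. Here "symmetric" is read as Hermitian, so that \cref{def:pd psd} applies.

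\emph{Forward direction.} Assume $A$ is psd and let $r=\operatorname{rank}A$. By \cref{thm:psd RC}(c) we write $A=B^*B$ with columns $b_1,\dots,b_n$, so that $A_M=B_M^*B_M$, where $B_M$ is the submatrix of $B$ on the columns indexed by $M$. Since $\operatorname{rank}B=\operatorname{rank}A=r$, we choose $K$ with $|K|=r$ such that $\{b_i\}_{i\in K}$ is linearly independent. Then $A_K$ is the Gram matrix of independent vectors, so $x^*A_Kx=\|B_Kx\|^2>0$ for $x\neq 0$, and $A_K$ is positive definite. For $|M|>r$, the columns $\{b_i\}_{i\in M}$ lie in an $r$-dimensional space and are therefore dependent. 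Hence $B_Mx=0$ for some $x\neq0$, which gives $A_Mx=0$, and $A_M$ is singular. (If $r=0$, take $K=\emptyset$, with the convention that the empty matrix is positive definite.)

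\emph{Converse.} Suppose $K$ exists with $|K|=k$. All principal submatrices of size greater than $k$ are singular, and a principal $m\times m$ submatrix of size $m=\operatorname{rank}A$ is nonsingular for Hermitian $A$, since a Hermitian matrix of rank $r$ has a nonzero principal $r\times r$ minor. Together these give $\operatorname{rank}A\le k$. Because $A_K$ is an invertible submatrix, $\operatorname{rank}A\ge k$, so $\operatorname{rank}A=k$.

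The key step is then a Schur complement argument. After a permutation similarity, write
\[A=\begin{pmatrix}A_K & C\\ C^* & D\end{pmatrix}.\]
Since $\operatorname{rank}A=\operatorname{rank}A_K+\operatorname{rank}(D-C^*A_K^{-1}C)$, the Schur complement $S=D-C^*A_K^{-1}C$ vanishes. Consequently
\[A=\begin{pmatrix}\iden & 0\\ C^*A_K^{-1} & \iden\end{pmatrix}\begin{pmatrix}A_K&0\\0&0\end{pmatrix}\begin{pmatrix}\iden & A_K^{-1}C\\ 0&\iden\end{pmatrix},\]
a congruence of $\operatorname{diag}(A_K,0)$. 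Since $A_K$ is positive definite, $\operatorname{diag}(A_K,0)$ is psd, and psd is preserved under $X\mapsto P^*XP$. Therefore $A$ is psd.

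The main obstacle is the fact that a Hermitian (or complex symmetric) matrix of rank $r$ has a nonsingular $r\times r$ principal submatrix. We can avoid it as follows: the rank formula only needs $S=0$, and we obtain this directly. For $i,j\notin K$, apply the hypothesis to $M=K\cup\{i\}$ to get $\det A_M=\det A_K\cdot S_{ii}=0$, so $S_{ii}=0$. Apply it to $M=K\cup\{i,j\}$ to get $\det A_K\cdot(S_{ii}S_{jj}-S_{ij}S_{ji})=0$, so $S_{ij}S_{ji}=0$. In the Hermitian case $S_{ji}=\overline{S_{ij}}$, so $|S_{ij}|^2=0$ and $S_{ij}=0$. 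Thus $S=0$, and only the sets $M$ with $|M|\in\{k+1,k+2\}$ are needed, matching \cref{thm:psd RC}(f). For merely complex symmetric $A$ the statement can fail: for $K=\{1\}$ and $A=\begin{pmatrix}1&i\\ i&-1\end{pmatrix}$, the determinant is $-1-i^2=0$ and $A_{\{1\}}=(1)$ is positive definite, yet $A$ is not psd. So the Hermitian reading is essential.
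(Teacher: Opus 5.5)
Your proof is correct, but it follows a different route from the paper.

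The paper deduces the statement from the equivalence of \cref{thm:psd RC}(a) and (f), which it proves spectrally. \cref{lem:bocher thm} (B\^ocher) provides a nonsingular $A_K$ with $|K|=\operatorname{rank}A$. Conversely, the same lemma fixes $\operatorname{rank}A=|K|$ from the bordered principal minors of orders $|K|+1$ and $|K|+2$. Cauchy interlacing (\cref{thm:cauchy int thm}) then transfers positivity of eigenvalues from $A$ to $A_K$ and back.

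You use neither cited result. In the forward direction, the Gram factorization lets you take $K$ to index a basis of the column space of $B$. In the converse, your computation $S_{ii}=0$ and $S_{ij}S_{ji}=0$ for the Schur complement $S$ of $A_K$ is a self-contained proof of the half of B\^ocher's lemma you need. The explicit congruence $A\cong A_K\oplus 0$ then replaces interlacing.

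The paper's argument is shorter once the citations are granted. Yours is elementary, gives the sharper form (f) directly, and exposes the structure of $A$. Its rank step also uses only the symmetry of $S$: in a field, $S_{ij}^2=0$ forces $S_{ij}=0$. So it proves, over any field, exactly the ingredient the paper reuses over $\mathbb{F}_q$ for the $\PSD_1$ analogue.

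You are also right that the hypothesis must be read as Hermitian. Your complex symmetric example $\begin{pmatrix}1&i\\ i&-1\end{pmatrix}$ satisfies the right-hand condition but is not psd. The paper's proof uses the Hermitian assumption tacitly through interlacing.
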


The following result is a consequence of \cref{thm:psd princ submatrix}, which is also proved in \cref{sec:new psd}.

\begin{corollary}\label{cor:posdef type 1 defn real case 1}
A matrix $A \in M_n(\mathbb{C})$ is positive semidefinite of rank $r$ if and only if, for each $m \in [n]$, there exists a subset $K_m \subseteq [m]$ such that the principal submatrix $A_{K_m}$ is positive definite, and every principal submatrix $A_M$ with $M \subseteq [m]$ and $|M|>|K_m|$ is singular.
\end{corollary}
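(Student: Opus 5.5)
The plan is to apply \cref{thm:psd princ submatrix} to each leading principal submatrix $A_{[m]}$, $m\in[n]$, and then to pin down the rank using the case $m=n$. (I read the statement for Hermitian $A$, as in \cref{def:pd psd}. The rank $r$ enters only through the expected identity $|K_n|=r$, which I will also establish.)

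\emph{Forward direction.} Suppose $A$ is psd. For each $m$, the submatrix $A_{[m]}$ is again psd: given $y\in\mathbb{C}^m$, extend it by zeros to $x\in\mathbb{C}^n$, and then $y^*A_{[m]}y=x^*Ax\ge 0$. Applying \cref{thm:psd princ submatrix} to $A_{[m]}$ yields a set $K_m\subseteq[m]$ with $A_{K_m}$ positive definite and $A_M$ singular for every $M\subseteq[m]$ with $|M|>|K_m|$. This is exactly the required condition.

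\emph{Converse.} The condition for $m=n$ is precisely the hypothesis of \cref{thm:psd princ submatrix} for $A$ itself, so $A$ is psd.

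\emph{Rank.} It remains to see that $|K_n|=\operatorname{rank}A=r$. I would use \cref{thm:psd RC}(c) to write $A=B^*B$, so that $\operatorname{rank}A=\operatorname{rank}B=r$.
\begin{itemize}
\item Choose $r$ linearly independent columns of $B$, indexed by a set $K$. Then $A_K=B_{[k],K}^*B_{[k],K}$ is the Gram matrix of independent vectors, hence positive definite and in particular nonsingular.
\item Any principal submatrix of size greater than $r$ has rank at most $r$, so it is singular.
\item Now take any admissible $K_n$. Since $A_{K_n}$ is nonsingular, $|K_n|\le r$. If $|K_n|<r$, then the set $K$ above has $|K|>|K_n|$ and $A_K$ nonsingular, contradicting the condition on $K_n$. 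Hence $|K_n|=r$.
\end{itemize}
So the $K_n$ produced in the forward direction has size $r$, and conversely the condition forces $\operatorname{rank}A=|K_n|$.

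The only step requiring care is this rank identification. It rests on the fact that for a psd matrix the rank equals the maximal order of a nonsingular principal submatrix, which the Gram factorization argument above supplies.
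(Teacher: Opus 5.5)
Your proof is correct and is essentially the paper's argument. Like the paper, you apply \cref{thm:psd princ submatrix} to each leading principal submatrix $A_{[m]}$, which is again psd, and you use the case $m=n$ for the converse. Your explicit identification $|K_n|=\operatorname{rank}A$ via the Gram factorization is a sound addition that the paper leaves implicit (there it comes from \cref{lem:bocher thm} in the proof of \cref{thm:psd princ submatrix}). Your Hermitian reading is also genuinely needed: for example, $\begin{pmatrix}1&i\\ i&-1\end{pmatrix}$ satisfies the stated condition without being psd.
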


\begin{proof}
Since every principal submatrix of a positive semidefinite matrix is positive semidefinite, the result follows by applying \cref{thm:psd princ submatrix} to each principal submatrix $A_{[m]}$.
\end{proof}

\subsection{Finite fields} 
\label{sec:finite}

In this subsection, we recall some basic properties of finite fields. For every prime $p$ and positive integer $k$, there exists a finite field with $q=p^k$ elements, unique up to isomorphism, which we denote by $\mathbb{F}_q$. For a field $\mathbb{F}$, the multiplicative group of all nonzero elements is denoted by $\mathbb{F}^{\times}$.

We now define positivity in a finite field.

\begin{definition}[\cite{CooperHannaWhitlatch2022}]
An element $a\in \mathbb{F}_q^{\times}$ is said to be \textit{positive} if there exists $b\in \mathbb{F}_q^{\times}$ such that $a=b^2$. An element in $\mathbb{F}_q^{\times}$ which is not positive is called \textit{negative}. The sets of positive and negative elements in $\mathbb{F}_q$ are denoted by $\mathbb{F}_q^{+}$ and $\mathbb{F}_q^{-}$, respectively.
\end{definition}

\begin{proposition}[\cite{CooperHannaWhitlatch2022}]
Over a finite field of characteristic two, all nonzero elements are positive. Over a finite field of odd characteristic, exactly half of the nonzero elements are positive. Therefore,
\[
|\mathbb{F}_q^{+}|=
\begin{cases}
\displaystyle q-1 & \text{if } q \text{ is even},\\[1em]
\displaystyle\frac{q-1}{2} & \text{if } q \text{ is odd}.
\end{cases}
\]
\end{proposition}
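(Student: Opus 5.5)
We prove the statement by studying the squaring map $\varphi\colon \mathbb{F}_q^{\times}\to\mathbb{F}_q^{\times}$, $\varphi(b)=b^2$. By definition $\mathbb{F}_q^{+}$ is the image of $\varphi$. Since $\mathbb{F}_q$ is commutative, $\varphi(bc)=b^2c^2=\varphi(b)\varphi(c)$, so $\varphi$ is a group homomorphism of the multiplicative group. Hence $|\mathbb{F}_q^{+}| = |\mathbb{F}_q^{\times}|/|\ker\varphi| = (q-1)/|\ker\varphi|$. The whole statement therefore reduces to computing the kernel $\ker\varphi=\{b : b^2=1\}$.

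The kernel consists of the roots of $x^2-1=(x-1)(x+1)$ in the field $\mathbb{F}_q$. Since a field has no zero divisors, these roots are exactly $1$ and $-1$. We then split according to the characteristic $p$ of $\mathbb{F}_q$.

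\emph{Even case.} If $q$ is even, then $p=2$, so $-1=1$ and $\ker\varphi=\{1\}$. Thus $\varphi$ is injective. Being an injective map of the finite set $\mathbb{F}_q^{\times}$ to itself, it is bijective, so every nonzero element is a square and $|\mathbb{F}_q^{+}|=q-1$. Equivalently, one can use the Frobenius map: it is a field automorphism of $\mathbb{F}_{2^k}$, so every element has a square root.

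\emph{Odd case.} If $q$ is odd, then $p\neq 2$, so $2\neq 0$ in $\mathbb{F}_q$ and hence $1\neq -1$. Therefore $|\ker\varphi|=2$ and $|\mathbb{F}_q^{+}|=(q-1)/2$. The negative elements are the complement of $\mathbb{F}_q^{+}$ in $\mathbb{F}_q^{\times}$, so they number the other $(q-1)/2$, which shows that exactly half the nonzero elements are positive.

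No step here is a real obstacle. The only point needing care is $1\neq-1$ in odd characteristic, which holds because $2$ is invertible when $p$ is odd. One could also argue via cyclicity of $\mathbb{F}_q^{\times}$, since the squares are the even powers of a generator, but the kernel-counting argument avoids needing that fact.
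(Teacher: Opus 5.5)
Your proof is correct and complete. The squaring map is a homomorphism of $\mathbb{F}_q^{\times}$ whose image is $\mathbb{F}_q^{+}$ by definition, and its kernel is $\{1,-1\}$. You correctly note that this kernel has one element exactly when the characteristic is $2$, and otherwise two elements because $1\neq -1$.

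The paper gives no proof of its own here; the proposition is quoted from \cite{CooperHannaWhitlatch2022}. So there is no in-paper route to compare against. Yours is the standard argument and needs nothing further.
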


\begin{definition}
A field $\mathbb{F}$ is called \textit{definite} if for every positive element $a\in \mathbb{F}$, there exists a positive element $b\in \mathbb{F}$ such that $a=b^2$. A field which is not definite is called \textit{indefinite}.
\end{definition}

We have the following classification of definite finite fields.

\begin{proposition}[\cite{CooperHannaWhitlatch2022}]
Let $q$ be a prime power. The finite field $\mathbb{F}_q$ is definite if and only if $q\equiv 3\pmod{4}$ or $q$ is even.
\end{proposition}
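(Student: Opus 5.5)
The plan is to prove the two directions separately, after first reformulating definiteness in terms of the cyclic group $\mathbb{F}_q^{\times}$.

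\textbf{Even $q$.} By the preceding proposition, every nonzero element is positive. Hence for positive $a$ we may take any square root $b$ with $b^2=a$; one exists because squaring is a bijection in characteristic two. This $b$ is nonzero, hence positive, so $\mathbb{F}_q$ is definite.

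\textbf{Odd $q$: reformulation.} Let $G=\mathbb{F}_q^{\times}$, which is cyclic of order $q-1$ with generator $g$. Then $\mathbb{F}_q^{+}=\{g^{2k}\}$ is the subgroup of index two. The field is definite exactly when the squaring map restricted to $\mathbb{F}_q^{+}$ is onto $\mathbb{F}_q^{+}$. Since $\mathbb{F}_q^{+}$ is finite, onto is equivalent to injective. So definiteness holds if and only if $\mathbb{F}_q^{+}$ contains no element of order $2$ other than the identity, that is, if and only if $-1\notin\mathbb{F}_q^{+}$.

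\textbf{Odd $q$: the two cases.} The condition $-1\notin\mathbb{F}_q^{+}$ can be checked directly.
\begin{itemize}
\item We have $-1=g^{(q-1)/2}$.
\item This is a square if and only if $(q-1)/2$ is even, i.e.\ if and only if $q\equiv 1\pmod 4$.
\item Equivalently, $\mathbb{F}_q^{+}$ is cyclic of order $(q-1)/2$, and squaring is a bijection on it exactly when $(q-1)/2$ is odd.
\end{itemize}
Thus for $q\equiv 3\pmod 4$ the field is definite. For $q\equiv 1\pmod 4$ it is indefinite.

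\textbf{An explicit witness for indefiniteness.} When $q\equiv1\pmod 4$, take $a=g^2$. Its only square roots are $\pm g$, and both are negative because $-1$ is a square. A prime-power $q$ that is odd is either $1$ or $3 \bmod 4$, so the cases are exhaustive.

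The main obstacle is only the clean equivalence between "squaring is surjective on $\mathbb{F}_q^{+}$" and the parity of $(q-1)/2$. I will state it as the standard fact that the map $x\mapsto x^2$ on a cyclic group of order $m$ is a bijection exactly when $m$ is odd.
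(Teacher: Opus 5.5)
Your proof is correct and complete. In characteristic two, bijectivity of squaring handles the even case. For odd $q$, you reduce definiteness to injectivity of $x\mapsto x^2$ on the cyclic group $\mathbb{F}_q^{+}$, which is equivalent to $-1\notin\mathbb{F}_q^{+}$ and hence to $(q-1)/2$ being odd. Your explicit witness $g^2$ with negative roots $\pm g$ for $q\equiv 1\pmod 4$ is also right.

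The paper gives no proof of this proposition; it simply quotes it from Cooper--Hanna--Whitlatch, so there is no in-paper argument to compare against. Your intermediate criterion ($\mathbb{F}_q$ is definite iff $-1$ is not a square) ties the proposition directly to the paper's \cref{thm:x^2=-1}, which the paper also states without proof.

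One cosmetic fix: ``no element of order $2$ other than the identity'' should read ``no nonidentity element whose square is $1$.'' The step from injectivity to that kernel condition uses that squaring is a homomorphism of the abelian group $\mathbb{F}_q^{+}$.
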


An application of the pigeonhole principle gives the following proposition.

\begin{proposition}\label{prop:php fin fld}
Let $\mathbb{F}_q$ be a finite field of odd order. Then:
\begin{enumerate}
    \item Every element of $\mathbb{F}_q$ can be written as a sum of two squares in $\mathbb{F}_q$.
    \item Every positive element of $\mathbb{F}_q$ can be written as a sum of two negative elements in $\mathbb{F}_q$.
\end{enumerate}
\end{proposition}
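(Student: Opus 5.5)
Both parts follow from a pigeonhole argument comparing two sets of size more than $q/2$ inside $\mathbb{F}_q$, which has $q$ elements. Throughout, $q$ is odd, so $0$ together with the $(q-1)/2$ positive elements gives exactly $(q+1)/2$ squares (including $0$), and there are $(q-1)/2$ negative elements.

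\emph{Part (1).} Fix $c\in\mathbb{F}_q$ and consider the two sets
\[
S=\{x^2 : x\in\mathbb{F}_q\}, \qquad T=\{c-y^2 : y\in\mathbb{F}_q\}.
\]
Each has $(q+1)/2$ elements, since $y\mapsto c-y^2$ is injective on the set of squares. Because $|S|+|T|=q+1>q$, the two sets must intersect. An intersection point gives $x^2=c-y^2$, that is, $c=x^2+y^2$.

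\emph{Part (2).} Fix a positive element $a$ and a negative element $\nu$. The negative elements are then exactly $\nu\mathbb{F}_q^{+}$, because $\mathbb{F}_q^{+}$ has index $2$ in $\mathbb{F}_q^{\times}$. Dividing by $\nu$, the question becomes whether $a/\nu$ can be written as $u+v$ with $u,v$ both nonzero squares. Note that $a/\nu$ is a negative element, hence nonzero.

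To handle this, I apply part (1) to $b=a/\nu$ and get $b=x^2+y^2$. The difficulty is that one of $x,y$ might be zero, and this is the main obstacle. If $x=0$, then $b=y^2$ would be a square, contradicting that $b$ is negative; the same argument rules out $y=0$. So both $x$ and $y$ are nonzero, and
\[
a=\nu x^2+\nu y^2
\]
expresses $a$ as a sum of two negative elements.

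Part (2) in fact holds for every nonzero $a$ whose quotient by $\nu$ is a nonsquare, which is exactly the case of positive $a$. I will simply state it for positive $a$, as in the proposition.
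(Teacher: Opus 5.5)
Your proof is correct. Part (1) is the same as the paper's argument: pigeonhole on the $(q+1)/2$ squares (including $0$) and their translate $c-S$.

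For part (2) you take a different route. The paper does a second, direct pigeonhole on $\mathbb{F}_q^{-}$ and $a-\mathbb{F}_q^{-}$. Since $a$ is positive, neither set contains $0$ or $a$. So these are two sets of size $(q-1)/2$ inside a set of size $q-2$, and they must meet.

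You instead rescale by a fixed nonsquare $\nu$, using $\mathbb{F}_q^{-}=\nu\,\mathbb{F}_q^{+}$, and apply part (1) to the nonsquare $a/\nu$. The possible obstacle, that one summand might be zero, is ruled out because a nonsquare cannot be $0$ or a single square.

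The paper's version is self-contained and parallel to part (1). It needs nothing about the coset structure of the squares, only a count of which elements the two sets avoid. Your version makes part (2) a corollary of part (1) and isolates the actual content: every nonsquare is a sum of two nonzero squares. The index-$2$ structure of $\mathbb{F}_q^{+}$ then does the rest.
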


\begin{proof}
Let $S=\mathbb{F}_q^+\cup\{0\}$. For any $a\in\mathbb{F}_q$, consider the two subsets $S$ and $a-S=\{a-s\mid s\in S\}$. Both sets have cardinality $(q+1)/2$. Since
$|S|+|a-S|=q+1>q=|\mathbb{F}_q|$,
the pigeonhole principle implies that $S$ and $a-S$ have a nonempty intersection. Hence, there exist $s_1,s_2\in S$ such that
$s_1=a-s_2$. Therefore, $a=s_1+s_2,$ which proves the first part.

For the second statement, apply a similar argument to the sets $\mathbb{F}_q^{-}$ 
and $a-\mathbb{F}_q^{-}=\{a-n\mid n\in \mathbb{F}_q^{-}\}$, where $a \in \mathbb{F}_q^+$. Note that neither set contains either $0$ or $a$.
Then the pigeonhole principle implies that these two sets have a nonempty intersection. 
Therefore, there exist $n_1,n_2\in \mathbb{F}_q^{-}$ such that $n_1=a-n_2$. Consequently, $a=n_1+n_2$, which proves the second part.
\end{proof}

We now give formulas for the number of solutions of certain equations over finite fields.

\begin{theorem}[\cite{Aabrandt2018}]\label{thm:x^2+y^2=1}
    The number of solutions to the equation $x^2+y^2=1$ over $\mathbb{F}_q$ is 
\[
    \begin{cases} 
    \displaystyle
    q & \text{if }q\text{ is even}, \\

    \displaystyle
    q+1 & \text{if }q \equiv 3 \pmod{4}, \\

    \displaystyle
    q-1 & \text{if }q \equiv 1 \pmod{4}.
    \end{cases}
    \]
\end{theorem}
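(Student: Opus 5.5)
We count the points on the conic $x^2+y^2=1$ by splitting into characteristic two and odd characteristic. If $q$ is even, squaring is the Frobenius automorphism of $\mathbb{F}_q$, hence a bijection, and $x^2+y^2=(x+y)^2$. The equation therefore becomes $x+y=1$. For each $x\in\mathbb{F}_q$ there is exactly one $y$, which gives $q$ solutions.

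For $q$ odd we use a rational parametrization. The point $(-1,0)$ lies on the conic. Any other solution $(x,y)$ has $x\neq -1$, since $x=-1$ forces $y=0$. So it determines a slope $t=y/(x+1)\in\mathbb{F}_q$. Conversely, intersecting the line $y=t(x+1)$ with the conic gives
\[(1+t^2)x^2+2t^2x+(t^2-1)=0.\]
This quadratic has the known root $x=-1$. When $1+t^2\neq 0$, the other root is
\[x=\frac{1-t^2}{1+t^2},\qquad y=\frac{2t}{1+t^2}.\]
It is distinct from $(-1,0)$, because $1-t^2=-(1+t^2)$ would give $2=0$. When $1+t^2=0$, the equation degenerates to $2t^2x+(t^2-1)=0$, i.e. $-2x-2=0$, so $x=-1$ and the line meets the conic only at $(-1,0)$. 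Thus the solutions are in bijection with $\{(-1,0)\}\cup\{t\in\mathbb{F}_q : t^2\neq -1\}$, and the count is
\[1+q-\#\{t : t^2=-1\}.\]

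The remaining step, which is the only place the residue class of $q$ enters, is to count the roots of $t^2=-1$. Since $\mathbb{F}_q^\times$ is cyclic of order $q-1$, the element $-1$ (which has order $2$) is a square exactly when $4\mid q-1$. In that case there are two roots, and the count is $q-1$. If $q\equiv 3\pmod 4$ there are no roots, and the count is $q+1$.

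The main thing to check carefully is that the parametrization is a genuine bijection. Each solution other than $(-1,0)$ gives exactly one $t$, and each admissible $t$ gives exactly one new point. The vertical line $x=-1$ needs no separate treatment: it meets the conic only at $(-1,0)$, because $y^2=0$ there.
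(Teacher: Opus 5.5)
Your proof is correct and complete. In characteristic two, the reduction to $x+y=1$ is valid because squaring is injective. In odd characteristic, the factorization $(x+1)\bigl((1+t^2)x+t^2-1\bigr)=0$ shows that each line through $(-1,0)$ meets the conic in at most one further point, and in none exactly when $1+t^2=0$. So your projection really is a bijection between the solutions other than $(-1,0)$ and the slopes $t$ with $t^2\neq -1$.

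The paper gives no proof of this statement; it simply cites \cite{Aabrandt2018}, so your argument supplies a self-contained one. Your route handles both odd residue classes uniformly, and it isolates the dependence on $q \bmod 4$ in the single count $\#\{t : t^2=-1\}$. That count is exactly \cref{thm:x^2=-1}, which the paper states with proof omitted. In effect you show that \cref{thm:x^2+y^2=1} is a corollary of \cref{thm:x^2=-1}.

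A route closer to the paper's proof of \cref{thm:a a-1 is +ve}, which factors $a^2-b^2=(a-b)(a+b)$, would write $x^2+y^2=(x+iy)(x-iy)$ with $i^2=-1$. For $q\equiv 1\pmod 4$ this gives the $q-1$ solutions immediately, via $u=x+iy\in\mathbb{F}_q^\times$ and $x-iy=u^{-1}$. For $q\equiv 3\pmod 4$, however, the factorization exists only over $\mathbb{F}_{q^2}$, and one needs the kernel of the norm map $\mathbb{F}_{q^2}^\times\to\mathbb{F}_q^\times$, which has size $q+1$. Your projection avoids both the case split and the field extension.
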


The following result characterizes the existence of square roots of $-1$ in finite fields, the proof of which is elementary and omitted.

\begin{theorem}\label{thm:x^2=-1}
    The number of solutions to the equation $x^2=-1$ over $\mathbb{F}_q$ is
\[
    \begin{cases} 
    \displaystyle
    1
    & \text{if }q\text{ is even}, \\

    \displaystyle
    0 
    & \text{if }q \equiv 3 \pmod{4}, \\

    \displaystyle
    2
    & \text{if }q \equiv 1 \pmod{4}.
    \end{cases}
\]
\end{theorem}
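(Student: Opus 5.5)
The statement to prove is \cref{thm:x^2=-1}: the equation $x^2=-1$ has one solution in $\mathbb{F}_q$ when $q$ is even, none when $q\equiv 3\pmod 4$, and two when $q\equiv 1\pmod 4$. The plan is to split into even and odd characteristic, and in the odd case to use only that $\mathbb{F}_q^{\times}$ is cyclic of order $q-1$.

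\textbf{Even $q$.} In characteristic two we have $-1=1$, so the equation becomes $x^2=1$, that is, $(x-1)^2=x^2-1=0$. Its only root is $x=1$, which gives exactly one solution. Equivalently, the Frobenius map $x\mapsto x^2$ is injective on $\mathbb{F}_q$, so $1$ has a unique square root.

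\textbf{Odd $q$.} Now $-1\neq 1$, and $-1$ is the unique element of order $2$ in $\mathbb{F}_q^{\times}$. Since the polynomial $x^2+1$ has at most two roots in a field, there are at most two solutions. A solution $x$ satisfies $x^4=1$ and $x^2\neq 1$, so $x$ has order exactly $4$ in $\mathbb{F}_q^{\times}$. Conversely, if $x$ has order $4$, then $x^2$ has order $2$, so $x^2=-1$.

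Hence the solutions are exactly the elements of order $4$ in $\mathbb{F}_q^{\times}$. I would then invoke the standard fact that $\mathbb{F}_q^{\times}$ is cyclic of order $q-1$. A cyclic group of order $q-1$ contains $\varphi(4)=2$ elements of order $4$ if $4\mid q-1$, and none otherwise. For odd $q$, the condition $4\mid q-1$ is exactly $q\equiv 1\pmod 4$; when $q\equiv 3\pmod 4$, the condition fails. This gives the counts $2$ and $0$ respectively.

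There is an alternative route for the case $q\equiv 3\pmod 4$ that avoids cyclicity. By Euler's criterion (or Lagrange's theorem), $(-1)^{(q-1)/2}=1$ must hold if $-1$ is a square. If $q\equiv 3\pmod 4$, then $(q-1)/2$ is odd, so $(-1)^{(q-1)/2}=-1\neq 1$, and $-1$ is not a square.

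\textbf{Main obstacle.} The argument is elementary throughout. The only nontrivial input is the existence of elements of order $4$ when $q\equiv 1\pmod 4$, which is where cyclicity of $\mathbb{F}_q^{\times}$ is needed, and I would simply cite it.
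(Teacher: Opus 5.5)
Your proof is correct and complete. The paper gives no argument for this statement (it calls the proof elementary and omits it), so there is no route to compare against; your proof fills that gap.

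In characteristic two, $x^2+1=(x-1)^2$, which forces $x=1$. In odd characteristic, the solutions are exactly the elements of order $4$ in the cyclic group $\mathbb{F}_q^{\times}$, and there are $\varphi(4)=2$ or $0$ of these according as $4\mid q-1$ or not. Your Euler-criterion alternative for $q\equiv 3\pmod 4$ is also valid.

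If you want to avoid citing cyclicity in the case $q\equiv 1\pmod 4$, use the factorization $x^{q-1}-1=\prod_{a\in\mathbb{F}_q^{\times}}(x-a)$. When $4\mid q-1$, the polynomial $x^4-1$ divides $x^{q-1}-1$. Hence $x^4-1$ splits into four distinct linear factors over $\mathbb{F}_q$. Since $x^4-1=(x^2-1)(x^2+1)$, the two roots other than $\pm 1$ are precisely the solutions of $x^2=-1$.
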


\begin{theorem}
\label{thm:a a-1 is +ve}
    The number of elements $y\in \mathbb{F}_q^{+}$ such that $y-1\in \mathbb{F}_q^{+}$ is
\[
    \begin{cases}
    \displaystyle
    q-2
    & \text{if $q$ is even}, \\[1em]

    \displaystyle
    \frac{q-5}{4}
    & \text{if $q\equiv 1 \pmod 4$}, \\[1em]

    \displaystyle
    \frac{q-3}{4}
    & \text{if $q\equiv 3 \pmod 4$}.
    \end{cases}
\]
\end{theorem}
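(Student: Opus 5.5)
The even case is immediate. When $q$ is even, every nonzero element is positive, so we need $y\neq 0$ and $y-1\neq 0$. This excludes exactly $y=0$ and $y=1$, which leaves $q-2$ elements.

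For $q$ odd, the plan is to count via the quadratic character. Let $\chi$ be the quadratic character of $\mathbb{F}_q$, extended by $\chi(0)=0$. For $y\notin\{0,1\}$, the indicator that both $y$ and $y-1$ are positive is
\[
\tfrac14\bigl(1+\chi(y)\bigr)\bigl(1+\chi(y-1)\bigr).
\]
So the desired count is
\[
N=\tfrac14\sum_{y\neq 0,1}\bigl(1+\chi(y)+\chi(y-1)+\chi(y(y-1))\bigr).
\]

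We evaluate the four pieces separately.
\begin{itemize}
\item The constant term contributes $q-2$.
\item Since $\sum_{y}\chi(y)=0$ and we omit $y=0$ (contributing $0$) and $y=1$ (contributing $1$), we get $\sum_{y\neq0,1}\chi(y)=-1$.
\item Similarly, omitting $y=0$ (contributing $\chi(-1)$) and $y=1$ (contributing $0$) gives $\sum_{y\neq0,1}\chi(y-1)=-\chi(-1)$.
\item The terms $y=0,1$ of $\chi(y(y-1))$ vanish, so this sum runs over all $y$. We use the standard Jacobsthal fact that $\sum_{y\in\mathbb{F}_q}\chi(y^2-y)=-1$.
\end{itemize}
To keep the argument inside the paper's toolkit, we justify the last fact by point counting. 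The number of $(y,t)$ with $t^2=y^2-y$ equals $\sum_y\bigl(1+\chi(y^2-y)\bigr)=q+\sum_y\chi(y^2-y)$. Completing the square, $(2y-1)^2-(2t)^2=1$. This conic is isomorphic to $uv=1$, which has $q-1$ points. Hence the sum is $-1$. Alternatively, one could reduce to \cref{thm:x^2+y^2=1}, but the hyperbola $uv=1$ is cleaner.

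Putting these together gives $N=\tfrac14\bigl(q-2-1-\chi(-1)-1\bigr)=\tfrac14\bigl(q-4-\chi(-1)\bigr)$. By \cref{thm:x^2=-1}, $\chi(-1)=1$ when $q\equiv1\pmod 4$ and $\chi(-1)=-1$ when $q\equiv 3\pmod 4$. This yields $(q-5)/4$ and $(q-3)/4$ respectively.

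The main obstacle is the character sum $\sum_y\chi(y^2-y)=-1$. I would handle it by the conic parametrization above, checking carefully that the change of variables is bijective in odd characteristic. The rest is bookkeeping of the excluded points $y=0,1$.
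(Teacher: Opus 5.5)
Your proof is correct, but it follows a different route from the paper's. The paper argues directly. It writes $y=a^2$ and $y-1=b^2$ with $a,b\neq 0$, factors $(a-b)(a+b)=1$, and parametrizes by $x=a+b$, so that $y=\bigl((x+x^{-1})/2\bigr)^2$. It then discards the set $S$ of $x$ with $x=0$, $x=\pm1$ or $x^2=-1$, which is exactly where $b=0$ or $a=0$; the size of $S$ comes from \cref{thm:x^2=-1}. Finally it shows that the fibres of $x\mapsto y$ on $\mathbb{F}_q\setminus S$ are exactly the four-element sets $\{x,-x,x^{-1},-x^{-1}\}$, which gives $(q-|S|)/4$.

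You instead expand the indicator with the quadratic character. The division by $4$ is then built into the factor $\tfrac14$, and the excluded values $y=0,1$ appear as the corrections $-1$ and $-\chi(-1)$. The only real input is the Jacobsthal sum $\sum_y\chi(y^2-y)=-1$, which you evaluate via the same hyperbola $uv=1$ that underlies the paper's substitution $x=a+b$, $x^{-1}=a-b$. In fact both arguments take the $q-1$ points of $a^2-b^2=1$, remove the $3+\chi(-1)$ points with $ab=0$, and divide by the four sign choices $(\pm a,\pm b)$. That is why both give $(q-4-\chi(-1))/4$.

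The paper's version is entirely elementary and constructive, since it lists the admissible $y$ explicitly. The cost is checking by hand that the four orbit elements are distinct off $S$. Yours avoids that case analysis and is more systematic: replacing $1+\chi$ by $1-\chi$ in a factor, or shifting $y-1$ to $y+1$, handles other sign patterns directly. For example, it gives \cref{cor:+-} without the subtraction argument used there.
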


\begin{proof}
Observe that $y$ cannot take the values $0$ or $1$. If $q$ is even, then every nonzero element of $\mathbb{F}_q$ is positive. Hence, $y$ can be any element of $\mathbb{F}_q \setminus \{0,1\}$, giving a total of $q-2$ valid choices.

Now suppose that $q$ is odd. We need to determine the number of elements $y = a^2$ such that $a^2 - 1 = b^2$ for some $a, b \in \mathbb{F}_q^\times$ which is equivalent to saying $(a-b)(a+b)=1$. Letting $x=a+b$, we get 
\[
a=\frac{x+x^{-1}}{2}, \quad b= \frac{x-x^{-1}}{2}.
\]
To avoid the values $a^2 = 0$ or $1$, $x$ cannot be $0, 1, -1$, or satisfy $x^2 = -1$. Therefore, by \cref{thm:x^2=-1}, we must restrict to $x \in \mathbb{F}_q \setminus S$, where
\[
S=\begin{cases}
{0, 1, -1} & \text{if } q \equiv 3 \pmod{4}, \\
{0, 1, -1, x_0, -x_0} & \text{if } q \equiv 1 \pmod{4},\ \text{with } x_0^2 = -1.
\end{cases}
\]

Now, many values from the set $\mathbb{F}_q \setminus S$ may give the same value of $y=a^2$. To address this, we define a relation $\sim$ on $\mathbb{F}_q \setminus S$ by $x_1 \sim x_2$ if
\[
\left( \frac{x_1 + x_1^{-1}}{2} \right)^2 = \left( \frac{x_2 + x_2^{-1}}{2} \right)^2.
\]
It is easy to see that $\sim$ is an equivalence relation.
We now claim that each equivalence class consists of exactly four elements of the form 
\[
\{x, -x, x^{-1}, -x^{-1}\}.
\]
Observe that
\[
\left( \frac{x_1 + x_1^{-1}}{2} \right)^2 = \left( \frac{x_2 + x_2^{-1}}{2} \right)^2
\]
if and only if
\[
\frac{x_1 + x_1^{-1}}{2} = \frac{x_2 + x_2^{-1}}{2}
\quad \text{or} \quad
\frac{x_1 + x_1^{-1}}{2} = -\left( \frac{x_2 + x_2^{-1}}{2} \right).
\]
Simplifying the first case gives $x_2 = x_1$ or $x_2 = x_1^{-1}$, while the second case gives $x_2 = -x_1$ or $x_2 = -x_1^{-1}$. Moreover, for $x \in \mathbb{F}_q \setminus S$, the elements $x$, $-x$, $x^{-1}$, and $-x^{-1}$ are all distinct and lie in $\mathbb{F}_q \setminus S$. This proves the claim.

Therefore, the number of distinct values of $y$ equals the number of equivalence classes, namely
\[ 
\begin{cases} 
\displaystyle \frac{q - 5}{4} & \text{if } q \equiv 1 \pmod{4}, \\[1em]
\displaystyle \frac{q - 3}{4} & \text{if } q \equiv 3 \pmod{4}, \end{cases}
\]
completing the proof.  
\end{proof}

The number of common neighbors of two neighboring vertices in a Paley graph can also be determined using \cref{thm:a a-1 is +ve}.

\begin{corollary}
\label{cor:+-}
    Let $\mathbb{F}_q$ be a field of odd order. The number of elements $y\in \mathbb{F}_q^{+}$ such that $y+1\in \mathbb{F}_q^{-}$ is
\[
    \begin{cases}
    \displaystyle
    \frac{q-1}{4}
    & \text{if $q\equiv 1 \pmod 4$}, \\[1em]
    \displaystyle
    \frac{q+1}{4}
    & \text{if $q\equiv 3 \pmod 4$}.
    \end{cases}
\]
\end{corollary}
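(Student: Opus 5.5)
We count positive $y$ by the sign of $y+1$ and use \cref{thm:a a-1 is +ve} to supply the positive case. Substituting $y = z-1$ there shows that the number of $y\in\mathbb{F}_q^{+}$ with $y+1\in\mathbb{F}_q^{+}$ equals the number of $z\in\mathbb{F}_q^{+}$ with $z-1\in\mathbb{F}_q^{+}$. That number is $(q-5)/4$ when $q\equiv 1\pmod 4$ and $(q-3)/4$ when $q\equiv 3\pmod 4$.

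Next we partition $\mathbb{F}_q^{+}$, which has $(q-1)/2$ elements, according to the status of $y+1$. The element $y+1$ is either zero, positive, or negative. The case $y+1=0$ occurs exactly when $-1\in\mathbb{F}_q^{+}$. By \cref{thm:x^2=-1}, this happens precisely when $q\equiv 1\pmod 4$, and it contributes one element, namely $y=-1$.

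Subtracting gives the count of $y\in\mathbb{F}_q^{+}$ with $y+1\in\mathbb{F}_q^{-}$:
\begin{itemize}
\item for $q\equiv 1\pmod 4$: $\frac{q-1}{2}-1-\frac{q-5}{4}=\frac{q-1}{4}$;
\item for $q\equiv 3\pmod 4$: $\frac{q-1}{2}-0-\frac{q-3}{4}=\frac{q+1}{4}$.
\end{itemize}
Both agree with the claimed formula.

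The main point needing care is the shift step. We must confirm that the map $y\mapsto y+1$ is a bijection between the set counted here (positive $y$ with $y+1$ positive) and the set counted in \cref{thm:a a-1 is +ve} (positive $z$ with $z-1$ positive). This holds because each set is exactly the image of the other under the shift. We must also correctly account for the exceptional value $y=-1$.
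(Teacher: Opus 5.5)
Your proposal is correct and follows the paper's proof: partition $\mathbb{F}_q^{+}$ according to whether $y+1$ is positive, negative, or zero, and take the positive count from \cref{thm:a a-1 is +ve}. The zero case contributes the single element $y=-1$ exactly when $q\equiv 1\pmod 4$, the negative count follows by subtraction from $(q-1)/2$, and your explicit shift $y\mapsto y+1$ (which the paper leaves implicit) is justified correctly.
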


\begin{proof}
For $q\equiv 1\pmod{4}$, the set of positive elements can be partitioned into three subsets: those positive elements whose sum with $1$ is positive, those whose sum with $1$ is negative, and the unique positive element whose sum with $1$ is $0$. 
By \cref{thm:a a-1 is +ve}, the cardinality of the first subset is known, while the third subset has cardinality $1$. Since the total number of positive elements is $(q-1)/2$, the cardinality of the second subset is obtained by subtraction, yielding the desired result.

The case $q\equiv 3\pmod{4}$ follows by the same argument, except that the third subset is empty.
\end{proof}

\subsection{Symmetric matrices over finite fields} 

The following lemma will prove useful to us. 

\begin{lemma}[\cite{Macwilliams1969}]
\label{lem:mac symm class}
Let $\mathbb{F}$ be an arbitrary field, $A\in M_{n-1}(\mathbb{F})$ be a symmetric matrix of rank $r$, and 
\[
B=
\left(
\begin{array}{c|c}
A&v\\
\hline
\\[-1em]
v^T&a_2
\end{array}
\right),
\]
be a symmetric matrix in $M_n(\mathbb{F})$, where $v\in \mathbb{F}^{n-1}$ and $a_2\in\mathbb{F}$. 
Then:
    \begin{enumerate}
        \item The rank of $B$ is $r$ if and only if the last row of $B$ linearly depends on the first $n-1$ rows of $B$.
        \item The rank of $B$ is $r+1$ if and only if $v$ linearly depends on the first $n-1$ rows of $A$ but the last row of $B$ is linearly independent of the first $n-1$ rows of $B$.
        \item The rank of $B$ is $r+2$ if and only if $v_1$ is linearly independent of the first $n-1$ rows of $A$.
    \end{enumerate}
\end{lemma}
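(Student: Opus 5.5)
The plan is to compare the rank of $B$ with that of the augmented matrix $C=\left(\begin{array}{c|c} A & v\end{array}\right)\in M_{n-1,n}(\mathbb{F})$. The rank is built up in two stages: first append the column $v$ to $A$, then append the last row $(v^T\ a_2)$ to $C$. Each stage raises the rank by $0$ or $1$. So $\operatorname{rank} B\in\{r,r+1,r+2\}$, and the three cases are decided by what happens at each stage. (I read ``$v_1$'' in item (3) as $v$, and ``rows of $A$'' as ``columns of $A$''. These agree since $A$ is symmetric, so its row space equals its column space.)

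\textbf{Step 1.} Show that $\operatorname{rank} C=r$ if $v$ lies in the column (equivalently row) space of $A$, and $\operatorname{rank} C=r+1$ otherwise. This is immediate from column rank.

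\textbf{Step 2.} Use symmetry to show that the last row $(v^T\ a_2)$ lies in the row space of $C$ exactly when it is a combination $x^T C$. That is, there is an $x$ with $x^TA=v^T$ and $x^Tv=a_2$. In particular, this forces $v^T$ into the row space of $A$, which is the condition of Step 1.

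\textbf{Step 3.} Handle the case where $v$ is not in the row space of $A$. Then $\operatorname{rank} C=r+1$. The last row cannot depend on the rows of $C$, since its first $n-1$ entries $v^T$ do not lie in the row space of $A$. Hence $\operatorname{rank} B=r+2$.

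\textbf{Step 4.} Handle the case where $v=A^Ty=Ay$ for some $y$. Then $\operatorname{rank} C=r$, and $\operatorname{rank} B$ is $r$ or $r+1$ according to whether the last row of $B$ depends on the first $n-1$ rows of $B$ (the rows of $C$). This gives (1) and (2).

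\textbf{Step 5.} Close the converses. In (1), if the last row depends on the first $n-1$ rows, then by Step 2 the vector $v$ is in the row space of $A$, so we are in the situation of Step 4 with rank $r$. In (3), rank $r+2$ is impossible in Step 4 and is forced in Step 3, so it is equivalent to $v$ being independent of the rows of $A$.

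The only subtle point is Step 2: knowing that the last row lies in the row span gives $v^T\in\operatorname{row}(A)$ automatically, so the three cases partition cleanly. No other obstacle is expected.
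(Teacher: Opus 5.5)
Your argument is correct and complete. The paper gives no proof of this lemma; it only cites MacWilliams. Your two-stage bordering argument is the standard proof: first append the column $v$ to get $C=(A\,|\,v)$, then append the row $(v^T\ a_2)$, each step raising the rank by $0$ or $1$. It settles all three equivalences, including the converses.

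Reading $v_1$ in item (3) as $v$ is the intended meaning; it is a typo in the statement.

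One minor point: Step 2 is simply the definition of the row space of $C$ and needs no symmetry. Symmetry is actually used in two other places:
\begin{itemize}
\item In Step 1, symmetry of $A$ makes its row space equal to its column space, so ``$v$ depends on the rows of $A$'' and ``$\operatorname{rank} C=r$'' are the same condition.
\item In Step 3, symmetry of $B$ means the new row begins with the same $v^T$ as the new column. So once the first increment occurs, the second one is forced.
\end{itemize}
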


We now give a recursion for the number of symmetric matrices of rank $r$ in $M_n(\mathbb{F}_q)$.

\begin{theorem}[\cite{Macwilliams1969}]\label{thm:symm rank conunt}
For a positive integer $n$ and nonnegative integer $r$, let $N(n,r)$ denote the number of rank $r$ symmetric matrices in $M_n(\mathbb{F}_q)$. Then $N(n,r)$ satisfies the recurrence
\[
N(n,r)=q^rN(n-1,r)+(q^{r}-q^{r-1})N(n-1,r-1)+(q^{n}-q^{r-1})N(n-1,r-2).
\]
where we set $N(n,0)=1$ whenever $n\geq 0$, and $N(n,m)=0$ whenever $m>n$ or $m<0$. As a consequence, 
\[
N(n,r)=\prod_{i=1}^{\lfloor r/2\rfloor}\frac{q^{2i}}{q^{2i}-1}\prod_{i=0}^{r-1}(q^{n-i}-1).
\]
\end{theorem}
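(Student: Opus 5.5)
The plan is to prove the recurrence by building a symmetric $n\times n$ matrix from its leading $(n-1)\times(n-1)$ block, and then obtain the product formula by induction on $n$. Write a symmetric $B\in M_n(\mathbb{F}_q)$ as in \cref{lem:mac symm class}, with $A$ symmetric of order $n-1$ and rank $s$, border $v\in\mathbb{F}_q^{n-1}$ and corner $a_2\in\mathbb{F}_q$. Since $\operatorname{rank}B\in\{s,s+1,s+2\}$, we have
\[
N(n,r)=\sum_{s\in\{r,r-1,r-2\}} N(n-1,s)\cdot c_{r-s}(s),
\]
where $c_j(s)$ counts the extensions $(v,a_2)$ of a fixed $A$ of rank $s$ that raise the rank by $j$. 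The key point is that $c_j(s)$ depends only on $s$ and not on $A$.

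To compute $c_j(s)$, note that $\operatorname{rank}$ and symmetry are preserved under congruence $A\mapsto PAP^T$, $B\mapsto (P\oplus 1)B(P\oplus1)^T$, which acts bijectively on the pairs $(v,a_2)$. So we may assume $A=\begin{pmatrix}A'&0\\0&0\end{pmatrix}$ with $A'$ invertible of order $s$.

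\begin{itemize}
\item[(i)] The rank goes up by $2$ exactly when $v$ is not in the column space of $A$, i.e. when $v$ has a nonzero entry among its last $n-1-s$ coordinates. This gives $c_2(s)=(q^{n-1}-q^{s})q$, counting the free choice of $a_2$.
\item[(ii)] If $v=(w,0)$ lies in the column space, then $v=Ax$ and the Schur complement $a_2-x^TAx$ is well defined. The rank stays at $s$ exactly when $a_2=w^TA'^{-1}w$, one value, and rises by $1$ otherwise. Hence $c_0(s)=q^s$ and $c_1(s)=q^s(q-1)$.
\end{itemize}

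Shifting indices, $c_0(r)=q^r$, $c_1(r-1)=q^{r-1}(q-1)=q^r-q^{r-1}$, and $c_2(r-2)=q(q^{n-1}-q^{r-2})=q^n-q^{r-1}$. These are exactly the three coefficients in the stated recurrence.

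For the closed form, denote the right-hand side by $F(n,r)$. Check the boundary values: $F(n,0)=1$, and $F(n,r)=0$ for $r>n$ because of the factor $q^{n-n}-1=0$. Then verify that $F$ satisfies the recurrence by dividing through by $F(n-1,r-2)$ and checking a rational identity in $q^n$ and $q^r$.

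This verification is the main obstacle. The ratios $F(n-1,r)/F(n-1,r-2)$ and $F(n-1,r-1)/F(n-1,r-2)$ involve the parity-dependent prefactor $\prod_{i\le\lfloor r/2\rfloor}$, so the argument must split into the cases $r$ even and $r$ odd. In each case I would write $F(n,r)/F(n-1,r-2)$ explicitly as a ratio of products, reduce the identity to a polynomial identity in $x=q^{n}$ and $y=q^{r}$, and check it by expansion. A useful normalization is $F(n,r)=\frac{\prod_{i=0}^{r-1}(q^{n-i}-1)}{\prod_{i=0}^{r-1}(q^{r-i}-1)}\cdot F(r,r)$, which is a Gaussian binomial times $F(r,r)$. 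This separates the problem into the known recurrence for $\qbinom{n}{r}$ and a count of invertible symmetric matrices.

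Alternatively, one could bypass the algebra: count rank-$r$ symmetric matrices as (number of $r$-dimensional row spaces, $\qbinom{n}{r}$) times (number of nondegenerate symmetric forms on $\mathbb{F}_q^r$, which is $F(r,r)$). Then $F(r,r)$ is derived from the recurrence specialized to $n=r$, which is where the parity split lives.
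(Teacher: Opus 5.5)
Your proposal is correct and is essentially the argument the paper relies on, namely MacWilliams' bordering argument behind \cref{lem:mac symm class}. Your congruence normalization $A\simeq A'\oplus 0$ and the Schur complement make the extension counts $q^s$, $q^s(q-1)$ and $q(q^{n-1}-q^s)$ explicit, and the closed form then follows by checking the recurrence. The parity-split identity you defer does hold: after dividing by $F(n-1,r-2)$, both sides reduce to $\frac{(x-1)(qx-y)}{y-1}$ for $r$ even and to $\frac{(x-1)(x-y)}{y-1}$ for $r$ odd, where $x=q^n$ and $y=q^{2\lfloor r/2\rfloor}$. The cases $r\le 1$ must be checked directly rather than by division, since there $F(n-1,r-2)=0$.
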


\subsection{Finitely generated modules over a PID}\label{sec:fg_mod_PID}
\begin{definition}
    A matrix $A \in \mathrm{GL}_n(\mathbb{F})$ is called \textit{orthogonal} if $A^{T} A = \iden_n$. The set of all orthogonal matrices over the finite field $\mathbb{F}_q$ forms a subgroup of $\mathrm{GL}_n(\mathbb{F}_q)$. We denote this subgroup by $\mathrm{O}_n(\mathbb{F}_q)$.
\end{definition}

We first recall some results from the structure theory of finitely generated modules over a principal ideal domain (PID). For the proof of the theorem, reader may  refer to Chapter 12 of \cite{DummitFoote1991}.

\begin{theorem}[{Fundamental theorem of modules over a PID~ \cite[Chapter 12]{DummitFoote1991}}]
\label{thm:fund thm mod PID}
Let $R$ be a PID, and let $\mathcal{M}$ be a finitely generated $R$-module. 
Then there exists primes $p_1, p_2, \dots, p_m \in R$ (not necessarily distinct) and positive integers $\alpha_1,\alpha_2,\dots,\alpha_m$ such that
\[
    \mathcal{M} \cong R^r\oplus R/(p_1^{\alpha_1})\oplus R/(p_2^{\alpha_2})\oplus\dots\oplus R/(p_m^{\alpha_m}).
\] 
    Moreover, this representation is unique up to isomorphism.
\end{theorem}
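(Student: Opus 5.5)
The plan is to prove \cref{thm:fund thm mod PID} in two stages: first establish existence of the decomposition, then uniqueness. The paper takes this result from \cite[Chapter 12]{DummitFoote1991}, and the sketch below follows that standard route.

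For existence, choose generators $m_1,\dots,m_k$ of $\mathcal{M}$. This gives a surjection $R^k\to\mathcal{M}$ with kernel $K$.

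1. Show that $K$ is free of rank at most $k$. This is the standard fact that submodules of free modules over a PID are free. It is proved by induction on $k$, projecting onto the last coordinate; the image there is an ideal, and every ideal of $R$ is principal.

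2. Represent the inclusion $K\hookrightarrow R^k$ by a matrix over $R$ and reduce it to Smith normal form $\mathrm{diag}(d_1,\dots,d_s,0,\dots,0)$ with $d_1\mid d_2\mid\cdots\mid d_s$. The reduction uses row and column operations, together with gcd steps realized by $2\times 2$ invertible matrices (Bezout). The argument runs by induction, selecting an entry of minimal ``size'', where size means the number of prime factors, so that the induction works even without a Euclidean algorithm.

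3. Read off the invariant factor form $\mathcal{M}\cong R^{r}\oplus\bigoplus_i R/(d_i)$.

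4. Factor each $d_i$ into prime powers and apply the Chinese remainder theorem, $R/(ab)\cong R/(a)\oplus R/(b)$ for coprime $a,b$. This yields the stated primary decomposition.

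For uniqueness, first observe that the free rank $r$ equals the rank of $\mathcal{M}/\mathrm{Tor}(\mathcal{M})$. This is well defined: tensor with the fraction field, or reduce modulo a maximal ideal. Next, the $p$-primary component $\{x : p^N x=0 \text{ for some } N\}$ is intrinsic to $\mathcal{M}$, so it suffices to treat $\mathcal{M}=\bigoplus_j R/(p^{\alpha_j})$ for a single prime $p$. For such a module, the dimension of $p^{i}\mathcal{M}/p^{i+1}\mathcal{M}$ over the field $R/(p)$ equals the number of $j$ with $\alpha_j>i$. These dimensions therefore determine the multiset of exponents $\{\alpha_j\}$.

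The main obstacle is step 2, the diagonalization over a general PID rather than a Euclidean domain. Here the descent argument must use a well-founded measure, namely the number of prime factors counted with multiplicity, which is finite because a PID is a UFD. The uniqueness count is routine by comparison.
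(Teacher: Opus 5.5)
Your proposal is correct. The paper gives no proof of its own; it only cites \cite[Chapter 12]{DummitFoote1991}. Your sketch follows the same overall plan as that source: a finite presentation, an invariant factor decomposition, the Chinese remainder theorem, and uniqueness via the intrinsic torsion and $p$-primary parts followed by an $R/(p)$-dimension count.

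The one genuine difference is your step 2. As I recall the cited proof, Dummit--Foote obtain the compatible bases non-constructively. Among the ideals $\varphi(K)$ with $\varphi\in\mathrm{Hom}_R(R^k,R)$ they choose a maximal one $\nu(K)=(a_1)$, which exists because a PID is Noetherian. For $y\in K$ with $\nu(y)=a_1$, maximality forces $a_1\mid\varphi(y)$ for every $\varphi$, so $y=a_1y_1$. One then splits $R^k=Ry_1\oplus\ker\nu$ and $K=Ra_1y_1\oplus(K\cap\ker\nu)$ and inducts. That argument needs no matrices and no termination measure, and the freeness of $K$ comes out of the same splitting.

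Your Smith normal form route is more computational and gives an algorithm whenever Bezout coefficients can be computed. Its termination can also be justified by the ascending chain condition on the ideals generated by the pivot entry, instead of by counting prime factors. So both approaches ultimately rest on the same Noetherian input.

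For the paper's only use of the theorem, $R=\mathbb{F}_q[x]$ acting on the torsion module $\mathcal{M}_A$, the ring is Euclidean. There the obstacle you single out disappears: ordinary row and column reduction using the division algorithm on degrees already gives the diagonal form.
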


Let $A \in M_n(\mathbb{F}_q)$. We define an $R = \mathbb{F}_q[x]$-module structure on $\mathcal{M}_A = \mathbb{F}_q^n$, where the action of $a \in \mathbb{F}_q \subset R$ and $x$ on $v \in \mathbb{F}_q^n$ is given by $a \cdot v = av$ and $x \cdot v = Av$, respectively. 

Recall that a \emph{free} $R$-module is an $R$-module that admits an $R$-basis (equivalently, it is isomorphic to a direct sum of copies of $R$), while a \emph{torsion} $R$-module is one in which every element is annihilated by some nonzero element of $R$. 
Any free $R$-module behaves like an infinite-dimensional vector space over $\mathbb{F}_q$. However, since $\mathcal{M}_A$ is finite-dimensional as a vector space over $\mathbb{F}_q$, its free rank must be zero. Therefore, $\mathcal{M}_A$ is a torsion $R$-module. This observation leads to the following corollary.

\begin{corollary}\label{cor:fun thm mod PID}
The module structure on $\mathcal{M}_A=\mathbb{F}_q^n$ defined above has a decomposition of the form 
\[
\mathcal{M}_A\cong \mathbb{F}_q[x]/(f_1(x)^{\alpha_1})\oplus \mathbb{F}_q[x]/(f_2(x)^{\alpha_2})\oplus\dots\oplus \mathbb{F}_q[x]/(f_m(x)^{\alpha_m})
\]
where each $f_i(x)$'s are irreducible polynomials (not necessarily distinct) and $\alpha_i$'s are positive integers.
\end{corollary}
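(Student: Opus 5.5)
The plan is to apply \cref{thm:fund thm mod PID} with $R=\mathbb{F}_q[x]$ and $\mathcal{M}=\mathcal{M}_A$, and then show that the free part $R^r$ must vanish. First I check the hypotheses. $R=\mathbb{F}_q[x]$ is a PID because it is a polynomial ring in one variable over a field, where the division algorithm holds. $\mathcal{M}_A$ is a finitely generated $R$-module, since the standard basis $e_1,\dots,e_n$ spans $\mathbb{F}_q^n$ over $\mathbb{F}_q\subset R$, and hence also over $R$. The theorem then gives
\[
\mathcal{M}_A\cong R^r\oplus \bigoplus_{i=1}^m R/(p_i^{\alpha_i}),
\]
where the $p_i$ are prime elements of $R$. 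After multiplying by units, the $p_i$ can be taken to be monic irreducible polynomials $f_i(x)$.

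Next I show $r=0$. I compare dimensions over $\mathbb{F}_q$. The module $\mathcal{M}_A=\mathbb{F}_q^n$ has $\mathbb{F}_q$-dimension $n$, which is finite, and the isomorphism is $R$-linear, so in particular it is $\mathbb{F}_q$-linear. If $r\ge 1$, then $R^r$ would contain a copy of $\mathbb{F}_q[x]$, which has the infinite $\mathbb{F}_q$-basis $1,x,x^2,\dots$. That is a contradiction, so $r=0$.

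An alternative route to $r=0$ is to show directly that $\mathcal{M}_A$ is torsion. The characteristic polynomial $\chi_A$ is nonzero, and by Cayley--Hamilton $\chi_A(x)\cdot v=\chi_A(A)v=0$ for every $v$. A nonzero torsion module can have no free summand, since $R$ is an integral domain and so no nonzero element of $R$ annihilates $R$. This again gives $r=0$.

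Either argument yields the stated decomposition. There is no real obstacle here. The only points needing care are that the isomorphism respects the $\mathbb{F}_q$-structure, which holds because $\mathbb{F}_q\subset R$, and that the primes of $\mathbb{F}_q[x]$ are exactly the irreducible polynomials up to units.
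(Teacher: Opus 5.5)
Your proposal is correct and follows the paper's argument: apply the structure theorem over the PID $\mathbb{F}_q[x]$, then rule out a free summand because $\mathcal{M}_A$ is finite-dimensional over $\mathbb{F}_q$ while $\mathbb{F}_q[x]$ is not. Your alternative Cayley--Hamilton argument, showing directly that $\mathcal{M}_A$ is torsion, is also valid and makes explicit what the paper's one-line remark leaves implicit.
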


\section{Positive definite matrices}\label{sec:posdef}

\subsection{Type 1}
\label{sec:posdef_1}

\begin{definition}
    A symmetric matrix $A\in \mathrm{GL}_n(\mathbb{F}_q)$ is said to be \textit{positive definite of type 1} (denoted by $\PD_1$) if each leading principal minor of $A$ is positive; that is, $\det(A_{[m]})$ is positive for all $m\in [n]$. 
    We will denote the set of $\PD_1$ matrices in $\mathrm{GL}_n(\mathbb{F}_q)$ by $\PD_1(n)$ and let $\pd_1(n)=|\PD_1(n)|$.
\end{definition} 

The following theorem gives the number of matrices in $\PD_1(n)$.

\begin{theorem}[{\cite[Corollary 1.5]{Vishwakarma2025}}]
\label{cor:num posdef_1}
The number of matrices in $\PD_1(n)$ is given by 
\[
\pd_1(n) = 
\begin{cases} 
\displaystyle
q^{\frac{n^2 - n}{2}}(q - 1)^n 
& \text{if }q\text{ is even}, \\[1em]

\displaystyle
q^{\frac{n^2 - n}{2}} \left( \dfrac{q - 1}{2} \right)^n
& \text{if }q\text{ is odd}.
\end{cases}
\]
\end{theorem}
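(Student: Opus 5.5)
We count $\PD_1(n)$ by building $A$ one row and column at a time, using the bordered form of \cref{lem:mac symm class}. The key fact is that if every leading principal submatrix $A_{[m]}$ is nonsingular, then each extension step is controlled entirely by one scalar, the Schur complement. Write $A_{[m]}$ in block form with $A' = A_{[m-1]}$ invertible, border vector $v \in \mathbb{F}_q^{m-1}$ and corner entry $a$. The Schur complement formula then gives
\[
\det(A_{[m]}) = \det(A')\,\bigl(a - v^T (A')^{-1} v\bigr).
\]

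The counting proceeds by induction on $n$. For $n=1$, $A=(a)$ with $a\in\mathbb{F}_q^+$, which gives $|\mathbb{F}_q^+|$ choices: $q-1$ if $q$ is even and $(q-1)/2$ if $q$ is odd, by the proposition on $|\mathbb{F}_q^+|$.

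For the inductive step, fix $A' \in \PD_1(n-1)$. Its leading minors are already positive, so $A$ lies in $\PD_1(n)$ exactly when $\det(A)$ is positive. Since $\det(A')$ is a nonzero square, positivity of $\det(A)$ is equivalent to $a - v^T(A')^{-1}v \in \mathbb{F}_q^+$, because $\mathbb{F}_q^+$ is a multiplicative subgroup. For each of the $q^{n-1}$ choices of $v$, the map $a \mapsto a - v^T(A')^{-1}v$ is a bijection of $\mathbb{F}_q$. So exactly $|\mathbb{F}_q^+|$ values of $a$ work.

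This gives the recursion
\[
\pd_1(n) = q^{n-1}\,|\mathbb{F}_q^+|\,\pd_1(n-1).
\]
Since $\sum_{m=1}^{n}(m-1) = (n^2-n)/2$, it unrolls to $q^{(n^2-n)/2}\,|\mathbb{F}_q^+|^n$, which is the stated formula in both parities.

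The only point needing care is the equivalence between positivity of $\det(A)$ and positivity of the Schur complement. This uses that $\mathbb{F}_q^+$ is closed under multiplication and under multiplication by inverses, and that in characteristic two every nonzero element is positive, so that case is uniform with the odd case. The argument does not need the symmetric-rank count of \cref{thm:symm rank conunt}.
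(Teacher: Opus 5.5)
Your argument is correct. The paper only cites this count from Vishwakarma and gives no proof, but your Schur-complement bordering is exactly the mechanism of \cref{lem:psd 1 extn}: writing $a_2=a_2'+p$ gives $\det(B_{K\cup\{n\}})=p\det(A_K)$, where $p$ is your Schur complement. Moreover, the full-rank case $r=n$ of \cref{thm:psd 1 recur} yields the same recurrence $\pd_1(n)=q^{n-1}|\mathbb{F}_q^{+}|\,\pd_1(n-1)$, so this is essentially the paper's own approach, made self-contained.
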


\begin{remark}
Suppose we remove the requirement of symmetry from a $\PD_1$ matrix. 
Then we can inductively build such matrices of order $n$ 
by writing them as
\[
B=
\begin{pmatrix}
\begin{array}{c|c}
A & v_2\\
\hline
\\[-1em]
v_1^T &a_2
\end{array}
\end{pmatrix},
\]
where $A$ is a similar matrix of order $n-1$, 
$v_1, v_2\in \mathbb{F}_q^{n-1}$ and $a_2\in \mathbb{F}_q$. 

If $g(n)$ denotes the number of matrices in $M_n(\mathbb{F}_q)$ all of whose leading principal minors are positive, then $g(1) = |\mathbb{F}_q^{+}|$, and for $n \geq 2$, one can show that $g(n)$ satisfies the recurrence
\[
g(n)= q^{2n-2}\ |\mathbb{F}_q^{+}|\ g(n-1).
\]
This gives us the formula,
\[
g(n) = 
\begin{cases} 
\displaystyle
q^{n^2 - n}(q - 1)^n
& \text{if }q\text{ is even}, \\[1em]
\displaystyle
q^{n^2 - n} \left( \dfrac{q - 1}{2} \right)^n
& \text{if }q\text{ is odd}.
\end{cases}
\]
\end{remark} 

\subsection{Type 2}
\label{sec:posdef 2}

We extend the Cholesky decomposition to a definition here.

\begin{definition}\label{def:posdef 2}
    A symmetric matrix $A\in \mathrm{GL}_n(\mathbb{F}_q)$ is said to be \textit{positive definite of type 2} (denoted by $\PD_2$), if $A = LL^T$ for some lower triangular matrix $L\in \mathrm{GL}_n(\mathbb{F}_q)$ with positive diagonal entries. 
    We will denote the set of $\PD_2$ matrices in $\mathrm{GL}_n(\mathbb{F}_q)$ by $\PD_2(n)$ and let $\pd_2(n)=|\PD_2(n)|$.
\end{definition}

For definite fields, we have the following result.

\begin{theorem}[{\cite[Theorem 1.2]{CooperHannaWhitlatch2022}}]
\label{thm:posdef 1 and posdef 2 equiv def field}
If $\mathbb{F}_q$ is a definite field, a symmetric matrix in $\mathrm{GL}_n(\mathbb{F}_q)$ is $\PD_1$ if and only if it is $\PD_2$. Thus the number of matrices in $\PD_2(n)$ is given by
\[
\pd_2(n)=\begin{cases} 
\displaystyle
q^{\frac{n^2 - n}{2}}(q - 1)^n 
& \text{if }q\text{ is even}, \\

\displaystyle
q^{\frac{n^2 - n}{2}} \left( \dfrac{q - 1}{2} \right)^n
& \text{if }q\equiv 3\pmod{4}.
\end{cases}
\]
\end{theorem}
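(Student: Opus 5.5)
The plan is to prove the two implications separately, by induction on $n$ using the block decomposition of a symmetric matrix into its leading $(n-1)\times(n-1)$ part. The counting formula then follows at once from \cref{cor:num posdef_1}, since the two sets coincide.

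\emph{$\PD_2 \Rightarrow \PD_1$ (valid over any finite field).} Suppose $A=LL^T$ with $L$ lower triangular and diagonal entries $\ell_{11},\dots,\ell_{nn}$ positive. Because $L$ is lower triangular, the leading block satisfies $A_{[m]}=L_{[m]}L_{[m]}^T$. Hence
\[
\det(A_{[m]})=\prod_{i=1}^m \ell_{ii}^2 ,
\]
which is a nonzero square and therefore positive.

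\emph{$\PD_1\Rightarrow\PD_2$ (uses definiteness).} Induct on $n$.

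For $n=1$, $A=(a)$ with $a$ positive. Definiteness gives a positive $b$ with $a=b^2$, so we take $L=(b)$.

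For the inductive step, write
\[
A=\begin{pmatrix} A' & v\\ v^T & a\end{pmatrix}.
\]
The block $A'$ is $\PD_1$ of order $n-1$, so by induction $A'=L'L'^T$ with $L'$ lower triangular, invertible, and with positive diagonal. We seek
\[
L=\begin{pmatrix} L' & 0\\ w^T & c\end{pmatrix}.
\]
Then $LL^T=A$ forces $L'w=v$, so $w=L'^{-1}v$ is uniquely determined. It also forces
\[
c^2=a-w^Tw=a-v^TA'^{-1}v .
\]
By the Schur complement formula,
\[
\det A=\det A'\,(a-v^TA'^{-1}v).
\]
Both $\det A$ and $\det A'$ are positive, and the positive elements form the subgroup of squares in $\mathbb{F}_q^\times$. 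So $s:=a-v^TA'^{-1}v=\det A/\det A'$ is positive.

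By definiteness there is a positive $c$ with $c^2=s$, and this $c$ completes $L$. For even $q$ every nonzero element is positive and squaring is a bijection, so this step is automatic there.

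The only real obstacle is choosing the square root $c$ to be positive rather than merely nonzero. This is exactly where the definiteness hypothesis enters: for $q\equiv3\pmod 4$, $-1$ is negative, so exactly one of $\pm\sqrt{s}$ is positive. The remaining care is checking that $L'$ is invertible, so that $w$ exists. This holds because its diagonal entries are nonzero.
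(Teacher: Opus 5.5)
Your proof is correct. The paper does not prove this statement itself; it quotes it from \cite[Theorem 1.2]{CooperHannaWhitlatch2022}. Your $\PD_1\Rightarrow\PD_2$ step is the same bordered-Cholesky induction the paper uses for \cref{thm:psd 1 psd 2 rel}, where your Schur complement identity $\det A=\det A'\,(a-v^TA'^{-1}v)$ plays the role of the multilinearity computation in \cref{lem:psd 1 extn}.

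One small bonus of your route: together with the induction hypothesis, it forces $w$ and $c$ uniquely, because a nonzero square has exactly one positive square root when $q$ is even or $q\equiv 3\pmod 4$. So $L\mapsto LL^T$ is injective on invertible lower triangular matrices with positive diagonal. You could therefore read off $\pd_2(n)=q^{(n^2-n)/2}|\mathbb{F}_q^{+}|^n$ directly, as in the proof of \cref{thm:posdef 2 indef field}, instead of going through \cref{cor:num posdef_1}.
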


We now derive a formula for the number of matrices in $\PD_2(n)$ over an
indefinite finite field. For this, we first prove the following lemma.

\begin{lemma}
\label{lem:posdef_2 lem indef field}
    Let $\mathbb{F}_q$ be any finite field and $L_1,L_2\in \mathrm{GL}_n(\mathbb{F}_q)$ be two lower triangular matrices with nonzero diagonal entries. Then $L_1L_1^T=L_2L_2^T$, if and only if $L_2=L_1R_n$ where $R_n\in \mathrm{GL}_n(\mathbb{F}_q)$ is a diagonal matrix with diagonal entries 1 or -1.
\end{lemma}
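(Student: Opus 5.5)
The plan is to reduce the claim to a statement about a single lower triangular matrix that is also orthogonal. The "if" direction is immediate: a diagonal $R_n$ with entries $\pm 1$ satisfies $R_nR_n^T=R_n^2=\iden_n$. Hence
\[
L_2L_2^T=L_1R_nR_n^TL_1^T=L_1L_1^T.
\]
For the "only if" direction, set $R=L_1^{-1}L_2$. Both $L_1$ and $L_2$ are invertible lower triangular, so $L_1^{-1}$ is lower triangular and $R$ is lower triangular and invertible. Its diagonal entries are $(L_2)_{ii}/(L_1)_{ii}\neq 0$.

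Rewriting $L_1L_1^T=L_2L_2^T$ as $\iden_n=L_1^{-1}L_2L_2^T(L_1^T)^{-1}$ gives $RR^T=\iden_n$. So $R^T=R^{-1}$. The left side $R^T$ is upper triangular, while $R^{-1}$ is lower triangular because it is the inverse of an invertible lower triangular matrix. A matrix that is both upper and lower triangular is diagonal, so $R$ is diagonal.

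Then $RR^T=R^2=\iden_n$ forces each diagonal entry $r_i$ to satisfy $r_i^2=1$, that is, $(r_i-1)(r_i+1)=0$. Therefore $r_i\in\{1,-1\}$, and $L_2=L_1R$ with $R=R_n$ of the required form.

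There is no real obstacle here. The only point needing care is characteristic two, where $1=-1$. There the conclusion simply says $R_n=\iden_n$, that is, $L_2=L_1$. The argument above is uniform over the characteristic, since $r^2=1$ has the unique solution $r=1$ in that case, so no separate treatment is needed. I would also note that the hypothesis of nonzero diagonal entries is exactly what makes $L_1$ and $L_2$ invertible; this is implicit in $L_i\in \mathrm{GL}_n(\mathbb{F}_q)$.
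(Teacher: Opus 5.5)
Your proof is correct, but it takes a different route from the paper's.

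The paper argues by induction on $n$. Assuming the leading $(n-1)\times(n-1)$ blocks already agree up to a sign matrix, it compares the $(n,j)$ entries of $L_1L_1^T$ and $L_2L_2^T$. A nested strong induction on $j$ then shows that the off-diagonal entries of the last rows satisfy $m_{n,j}=\ell_{n,j}r_j$. Finally it reads off $m_{n,n}^2=\ell_{n,n}^2$ from the $(n,n)$ entry. In effect it reruns the Cholesky recursion and observes that the only freedom is the sign of each diagonal entry.

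You instead observe that $R=L_1^{-1}L_2$ is both lower triangular and orthogonal. Hence $R^T=R^{-1}$ is both upper and lower triangular, so $R$ is diagonal with $r_i^2=1$.

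Your argument buys several things. It is shorter and avoids all the index bookkeeping. It works verbatim over any field, including characteristic two, as you note. It also identifies the group $H$ in the Lagrange-theorem remark after the count of $\PD_2$ matrices: $H$ is the intersection of the invertible lower triangular matrices with $\mathrm{O}_n(\mathbb{F}_q)$. That makes it immediate that the fibres of $L\mapsto LL^T$ are exactly the left cosets $LH$.

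The paper's entrywise computation buys constructiveness. It shows explicitly how each entry of the second factor is forced, row by row, once the diagonal signs are chosen.
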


\begin{proof}
    To prove the forward implication, we will use induction on $n$. For $n=1$, the statement is trivially true. Suppose the statement is true whenever the order of lower triangular matrices is $n-1$. 
    
Let $L_1=(\ell_{i,j}),\,L_2=(m_{i,j}) \in \mathrm{GL}_n(\mathbb{F}_q)$ and suppose that $L_1L_1^T=L_2L_2^T$.
By the induction hypothesis, there exists $
R_{n-1}=\operatorname{diag}(r_1,r_2,\dots,r_{n-1})\in \mathrm{GL}_{n-1}(\mathbb{F}_q),$ with diagonal entries $1$ or $-1$, such that
$L_{1\,[n-1]}=L_{2\,[n-1]}R_{n-1}$.
Consequently,
\begin{equation}
\label{eqn:posdef 2 low trian mat 1}
\ell_{i,j}=m_{i,j}r_j, \text{ for all } i,j\in [n-1].
\end{equation}
Comparing the $(n,j)$'th entries of $L_1L_1^T$ and $L_2L_2^T$, we obtain
\[
\sum_{i=1}^j\ell_{n,i}\ell_{j,i}
=
\sum_{i=1}^jm_{n,i}m_{j,i}.
\]
Hence,
\begin{equation}\label{eqn:posdef 2 low trian mat 2}
m_{n,j} =
(m_{j,j})^{-1}
\left(
\sum_{i=1}^j\ell_{n,i}\ell_{j,i}
-
\sum_{i=1}^{j-1}m_{n,i}m_{j,i}
\right).
\end{equation}
We claim that for every $i\in [n-1]$, we have
\[
m_{n,i}=\ell_{n,i}r_i.
\]

We prove the claim by strong induction on $j$. For $j=1$, \eqref{eqn:posdef 2 low trian mat 1} and \eqref{eqn:posdef 2 low trian mat 2} gives
\[
m_{n,1}=(m_{1,1})^{-1}\ell_{n,1}\ell_{1,1}=\ell_{n,1}r_1.
\]
Thus the base case holds.
Assume that the claim holds for all indices less than $j$. Then, by \eqref{eqn:posdef 2 low trian mat 2},
\[
\begin{aligned}
m_{n,j}&=m_{j,j}^{-1}
\left(\sum_{i=1}^j\ell_{n,i}\ell_{j,i}-\sum_{i=1}^{j-1}m_{n,i}m_{j,i}\right)\\
&=(r_j\ell_{j,j})^{-1}
\left(\sum_{i=1}^j\ell_{n,i}\ell_{j,i}-\sum_{i=1}^{j-1}r_i^2\ell_{n,i}\ell_{j,i}
\right)\\
&=(r_j\ell_{j,j})^{-1}\ell_{n,j}\ell_{j,j}=r_j\ell_{n,j},
\end{aligned}
\]
where the second equality follows from the induction hypothesis and \eqref{eqn:posdef 2 low trian mat 1}. Hence the claim follows by strong induction.
Taking $j=n$ in \eqref{eqn:posdef 2 low trian mat 2}, we obtain
\[
\begin{aligned}
m_{n,n}^2 &=\sum_{i=1}^n\ell_{n,i}^2-\sum_{i=1}^{n-1}m_{n,i}^2\\
&=\ell_{n,n}^2+\sum_{i=1}^{n-1}m_{n,i}^2r_i^2-\sum_{i=1}^{n-1}m_{n,i}^2=\ell_{n,n}^2,
\end{aligned}
\]
where the second equality follows from the claim. Therefore, $m_{n\times n}=\ell_{n\times n}$ or $m_{n\times n}=-\ell_{n\times n}$. In the former case, take $r_n=1$ and in the later case, take $r_n=-1$. Define $R_n=\operatorname{diag}(r_1,r_2,\dots,r_n)$, we will get $L_1=L_2R_n$. Hence by induction, the forward implication is proved.

The reverse implication is obvious, that is $L_1L_1^T=L_2L_2^T$ whenever $R_n\in GL_n(\mathbb{F}_q)$ is a diagonal matrix with diagonal entries 1 or -1.
\end{proof}

\begin{theorem}\label{thm:posdef 2 indef field}
For an indefinite field $\mathbb{F}_q$, the number of matrices in $PD_2(n)$ is given by
\[
\pd_2(n)=q^{\frac{n^2-n}{2}}\left(\frac{q-1}{4}\right)^n.
\]
\end{theorem}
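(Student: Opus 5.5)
The count will go through the map $L\mapsto LL^T$, restricted to lower triangular $L\in\mathrm{GL}_n(\mathbb{F}_q)$ with positive diagonal entries. Let $\mathcal{L}$ denote this set. By \cref{def:posdef 2}, $\PD_2(n)$ is exactly the image of $\mathcal{L}$ under this map. The plan is to count $\mathcal{L}$ directly and then determine the size of each fibre using \cref{lem:posdef_2 lem indef field}.

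Counting $\mathcal{L}$ is immediate. There are $\frac{n^2-n}{2}$ strictly-below-diagonal entries, each of which is arbitrary, and $n$ diagonal entries, each of which lies in $\mathbb{F}_q^+$. Since an indefinite field has odd order, $|\mathbb{F}_q^+|=\frac{q-1}{2}$, and so
\[
|\mathcal{L}|=q^{\frac{n^2-n}{2}}\left(\frac{q-1}{2}\right)^n.
\]

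Now fix $L_1\in\mathcal{L}$. By \cref{lem:posdef_2 lem indef field}, the lower triangular matrices $L_2$ with nonzero diagonal and $L_2L_2^T=L_1L_1^T$ are exactly the $2^n$ matrices $L_1R$, where $R$ is diagonal with entries $\pm1$. These $2^n$ matrices are distinct because $q$ is odd, so $1\neq -1$. Multiplying by $R$ multiplies the diagonal entry $\ell_{ii}$ by $r_i$. The key input is that the field is indefinite: by the classification in \cref{sec:finite}, $q\equiv 1\pmod 4$, so $-1$ is a square, and hence $-\ell_{ii}$ is positive whenever $\ell_{ii}$ is. Consequently all $2^n$ matrices $L_1R$ lie in $\mathcal{L}$, and every fibre of $\mathcal{L}\to\PD_2(n)$ has exactly $2^n$ elements. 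Dividing,
\[
\pd_2(n)=\frac{|\mathcal{L}|}{2^n}=q^{\frac{n^2-n}{2}}\left(\frac{q-1}{4}\right)^n.
\]

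The only point needing care is the fibre count. Here we must use that indefiniteness is equivalent to $q\equiv1\pmod4$, so that $-1\in\mathbb{F}_q^+$ and the sign changes preserve positivity of the diagonal. In the definite case the fibres have size $1$ instead, which is consistent with \cref{thm:posdef 1 and posdef 2 equiv def field}.
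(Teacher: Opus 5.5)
Your proof is correct and follows the paper's own argument: count the lower triangular matrices with positive diagonal as $q^{\frac{n^2-n}{2}}\left(\frac{q-1}{2}\right)^n$, then use \cref{lem:posdef_2 lem indef field} to show each fibre of $L\mapsto LL^T$ has size $2^n$. You also make explicit a step the paper leaves implicit. Indefiniteness forces $q\equiv 1\pmod 4$, so $-1\in\mathbb{F}_q^+$, and therefore all $2^n$ sign changes $L_1R$ stay within the set of matrices with positive diagonal.
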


\begin{proof}
Let $\mathcal{L}_n^+(\mathbb{F}_q)\subseteq \mathrm{GL}_n(\mathbb{F}_q)$ be set of all the lower triangular matrices in $\mathrm{GL}_n(\mathbb{F}_q)$ with positive diagonal entries. Note that
\[
|\mathcal{L}_n^+(\mathbb{F}_q)|=q^{\frac{n^2-n}{2}}\left(\frac{q-1}{2}\right)^n.
\]
Define a relation $\sim$ on $\mathcal{L}_n^+(\mathbb{F}_q)$ by setting $L_1\sim L_2$ if $L_1L_1^T=L_2L_2^T$. It is easy to see that $\sim$ is an equivalence relation. By \cref{lem:posdef_2 lem indef field}, each equivalence class contains exactly $2^n$ elements.
Therefore, the number of equivalence classes, which is also the number of matrices in $\PD_2(n)$ is 
\[
q^{\frac{n^2-n}{2}}\left(\frac{q-1}{4}\right)^n.
\]
This completes the proof.
\end{proof}

\begin{remark}
The proof of the \cref{thm:posdef 2 indef field} can be seen as an application of Lagrange's theorem. The set $\mathcal{L}_n^+(\mathbb{F}_q)$ is a subgroup of $\mathrm{GL}_n(\mathbb{F}_q)$, and the set of all such $R_n$'s defined in \cref{lem:posdef_2 lem indef field} is a subgroup of $\mathcal{L}_n^+(\mathbb{F}_q)$; call this subgroup $H$. We can rephrase \cref{lem:posdef_2 lem indef field} as follows: For two matrices 
$M,M'\in \mathcal{L}_n^+(\mathbb{F}_q)$, we have $MM^{T} = M'M'^{T}$ if and only if $M$ and $M'$ lie in the same left coset of the subgroup $H$. Therefore, the number of matrices in $\PD_2(n)$ equals the number of left cosets of $H$ in $\mathcal{L}_n^+(\mathbb{F}_q)$, which can be computed using Lagrange’s theorem.
\end{remark}

\subsection{Type 3}
\label{sec:posdef_3}

\begin{definition}
A symmetric matrix $A\in \mathrm{GL}_n(\mathbb{F}_q)$ is said to be \textit{positive definite matrix of type 3} (denoted by $\PD_3$), if there exists an invertible matrix $M\in \mathrm{GL}_n(\mathbb{F}_q)$ such that $A=MM^T$. We will denote the set of $\PD_3$ matrices in $\mathrm{GL}_n(\mathbb{F}_q)$ by $\PD_3(n)$ and let $\pd_3(n)=|\PD_3(n)|$.
\end{definition}

\begin{remark}\label{rem:posdef_2 and posdef_3}
Observe that, for every $n\geq 1$, we have $\PD_2(n)\subseteq \PD_3(n)$.

\end{remark}

The following lemma characterizes the matrices over field of characteristic two, that can be written in the form $MM^T$ where $M\in M_n(\mathbb{F}_q)$.

\begin{lemma}[{\cite[Theorem 7]{Albert1938}}]
\label{lem:posdef 3 q even} 
Over a field $\mathbb{F}_q$ of characteristic two, a nonzero symmetric matrix $A\in M_n(\mathbb{F}_q)$ can be written in the form $MM^T$ for some $M_n(\mathbb{F}_q)$ if and only if at least one of the diagonal elements of $A$ is nonzero.
\end{lemma}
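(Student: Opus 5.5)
The statement is Albert's characterization: over a field $\mathbb{F}_q$ of characteristic two, a nonzero symmetric $A$ is of the form $MM^T$ if and only if some diagonal entry of $A$ is nonzero. We prove the two directions separately.

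\emph{Necessity.} Suppose $A=MM^T$ with rows $m_1,\dots,m_n$ of $M$. Then $a_{ii}=\sum_k m_{ik}^2=\bigl(\sum_k m_{ik}\bigr)^2$, because squaring is additive in characteristic two. If every diagonal entry vanished, each row sum of $M$ would be zero. Then for all $i,j$ the form $b(x,y)=xy^T$ satisfies $b(m_i,m_i)=0$. Since $b(x+y,x+y)=b(x,x)+b(y,y)$ in characteristic two, the set $\{x: b(x,x)=0\}=\{x:\sum_k x_k=0\}$ is a subspace. That alone gives no contradiction, so instead we use the diagonal directly. The map $x\mapsto b(x,x)=(\sum x_k)^2$ is semilinear. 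Hence if $A=MM^T$, then $x^TAx=(\text{linear form in }x)^2$. Concretely, $x^TAx=\sum_i a_{ii}x_i^2$ (off-diagonal terms cancel in pairs), and also $x^TAx=(x^TM)(x^TM)^T=\bigl(\sum_k (x^TM)_k\bigr)^2$. If all $a_{ii}=0$, then $x^TM\mathbf{1}=0$ for all $x$, so $M\mathbf{1}=0$. This does not yet force $A=0$, so the cleaner route is to show directly that alternating forms are not Gram forms. Let $u=M^T x$ range over the column space of $M^T$. The bilinear form $A(x,y)=(M^Tx)\cdot(M^Ty)$ is the restriction of the standard dot product to the subspace $W=\mathrm{im}(M^T)$, pulled back. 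If all $a_{ii}=0$, every vector of $W$ is isotropic: $w\cdot w=(\sum w_k)^2=0$, so $W\subseteq \mathbf{1}^\perp$. The dot product restricted to a subspace of $\mathbf{1}^{\perp}$ need not vanish, so necessity is in fact false as literally stated for some $n$. For example, with $n=4$ over $\mathbb{F}_2$, $M$ with rows $(1,1,0,0),(0,1,1,0)$ gives $MM^T=\begin{pmatrix}0&1\\1&0\end{pmatrix}$. We therefore read Albert's theorem with $M$ square, $M\in M_n$. Then $W\subseteq\mathbf{1}^\perp$ together with the nondegeneracy of the dot product on $\mathbb{F}_q^n$ is the key input. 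I expect the precise hypothesis here to be the main obstacle: I would check whether the rank and dimension count of a totally isotropic-for-$Q$ subspace inside $\mathbb{F}_q^n$ forces $A=0$, and otherwise allow the lemma to say ``$MM^T$ for some $M$ of appropriate size''.

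\emph{Sufficiency.} Assume $a_{11}\neq 0$ after a simultaneous permutation of rows and columns. The plan is to diagonalize by congruence. Let $E$ be elementary, clearing the first row and column, so that $EAE^T=\mathrm{diag}(a_{11})\oplus A'$. If $A'$ has a nonzero diagonal entry, we induct. If $A'$ is alternating but nonzero, we use $a_{11}$ to repair it: replacing $e_j$ by $e_j+e_1$ changes the diagonal entry $a'_{jj}$ by $a_{11}\neq0$, which makes it nonzero while keeping $a_{11}$. We then continue, ending with $PAP^T=D\oplus 0$ where $D$ is diagonal with nonzero entries. Every element of $\mathbb{F}_q$ is a square in characteristic two, so $D=\Delta^2$ with $\Delta$ diagonal. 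Hence $A=MM^T$ with $M=P^{-1}(\Delta\oplus 0)$. The delicate step is the repair move, namely verifying that the congruence by $e_j\mapsto e_j+e_1$ does not reintroduce off-diagonal coupling with the first coordinate in a way that breaks the induction. After the repair we re-clear and continue.
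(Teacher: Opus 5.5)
Your necessity argument never reaches a proof, and the rescue you propose points the wrong way. The statement already takes $M\in M_n(\mathbb{F}_q)$ square, and your counterexample survives padding $M$ with zero rows. For $n=3$, the matrix $M$ with rows $(1,1,0)$, $(1,0,1)$, $(0,0,0)$ gives $MM^T=\begin{pmatrix}0&1&0\\1&0&0\\0&0&0\end{pmatrix}$, which is nonzero with zero diagonal, and the same construction works for every $n\ge 3$. So the ``only if'' direction is false as literally stated for singular $A$. No rank or dimension count can close it, and allowing $M$ of other shapes only enlarges the set of matrices of the form $MM^T$. What is true, and what the paper needs for $\PD_3$, is the version in which $M$ has full column rank. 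Suppose $A=MM^T$ with $M\in M_{n,r}(\mathbb{F}_q)$ and $r=\operatorname{rank}A$; for instance, $A$ nonsingular and $M$ square, hence invertible. Then your own observation that a zero diagonal forces $M\mathbf{1}=0$ contradicts $\operatorname{rank}M=r$, and this is a complete proof of necessity in that setting. The paper only cites Albert here, and its even-$q$ formula for $\psd_3(n,r)$ agrees with the number of non-alternate matrices, so it inherits the same problem for singular matrices. For $n=3$, $r=2$, $q=2$ it gives $21$, whereas all $28$ symmetric matrices of rank $2$ are of the form $MM^T$ with $M$ square.

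In the sufficiency argument, the step you flag as delicate fails as described. Apply $e_j\mapsto e_j+e_1$ to $(a_{11})\oplus A'$ with $A'$ alternating. The resulting matrix has $(1,1)$, $(1,j)$ and $(j,j)$ entries all equal to $a_{11}$. Its block on the remaining coordinates is $A'+a_{11}E_{jj}$, with $E_{jj}$ a matrix unit, and its first column there is $a_{11}e_j$. Re-clearing with the pivot $a_{11}$ gives the Schur complement $A'+a_{11}E_{jj}-a_{11}^{-1}(a_{11}e_j)(a_{11}e_j)^T=A'$, the same alternating block, so the procedure loops; try $(1)\oplus\begin{pmatrix}0&1\\1&0\end{pmatrix}$ over $\mathbb{F}_2$. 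The missing move is to pivot on the repaired entry $(j,j)$ instead, choosing $j$ so that row $j$ of $A'$ is nonzero. The resulting Schur complement has diagonal entry $0$ at position $1$ and $(a'_{ij})^2/a_{11}$ at each $i\neq 1,j$, so it is non-alternate. Your induction (non-alternate implies congruent to $D\oplus 0$ with $D$ nonsingular diagonal) then goes through. Equivalently, check directly that $(a)\oplus\begin{pmatrix}0&1\\1&0\end{pmatrix}$ is congruent to $\operatorname{diag}(a,a,a)$. With that step fixed, your conclusion via $D=\Delta^2$ and $M=P^{-1}(\Delta\oplus 0)$ is correct.
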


The following lemma characterizes $\PD_3$ matrices over fields of odd characteristic.

\begin{lemma}[\cite{Dickson1901}]\label{lem:posdef 3 q odd} Over a field $\mathbb{F}_q$ of odd characteristic, an invertible symmetric matrix can be written in the form $MM^T$ if and only if the determinant of $A$ is a nonzero square.
\end{lemma}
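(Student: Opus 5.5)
The plan is to show that, for $q$ odd, a matrix $A \in \mathrm{GL}_n(\mathbb{F}_q)$ is $\PD_3$ exactly when $\det(A)$ is a nonzero square. Necessity is immediate. If $A=MM^T$ with $M$ invertible, then $\det(A)=\det(M)^2$ with $\det(M)\neq 0$, so $\det(A)\in\mathbb{F}_q^{+}$.

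For sufficiency, the plan is to use the classification of nondegenerate quadratic forms over $\mathbb{F}_q$ for $q$ odd, built from elementary steps.

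\textbf{Step 1 (diagonalize).} Any invertible symmetric $A$ is congruent to a diagonal matrix $D=\operatorname{diag}(d_1,\dots,d_n)$ with all $d_i\neq 0$. To see this, first find a vector $x$ with $x^TAx\neq 0$. Such a vector exists because in odd characteristic the polarization identity $2x^TAy=(x+y)^TA(x+y)-x^TAx-y^TAy$ rules out $x^TAx\equiv 0$ for $A\neq 0$. Then split off $x$ and its $A$-orthogonal complement, and induct on $n$. This gives $A=PDP^T$ with $P\in\mathrm{GL}_n(\mathbb{F}_q)$.

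\textbf{Step 2 (reduce to $2\times 2$ blocks).} Each $d_i$ is either a square or a square times a fixed nonsquare $\epsilon$. Rescaling by a diagonal matrix of square roots, we may therefore assume every $d_i\in\{1,\epsilon\}$. The key claim is that $\operatorname{diag}(\epsilon,\epsilon)$ is congruent to $\iden_2$. By \cref{prop:php fin fld}(1), write $\epsilon=a^2+b^2$. Set
\[
N=\begin{pmatrix} a & b\\ -b & a\end{pmatrix}.
\]
Then $NN^T=(a^2+b^2)\iden_2=\epsilon\,\iden_2$, and $\det N=\epsilon\neq 0$. Applying this repeatedly to pairs of $\epsilon$'s, $D$ becomes congruent to $\operatorname{diag}(1,\dots,1,\epsilon^k)$ with $k\in\{0,1\}$.

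\textbf{Step 3 (use the determinant).} Congruence multiplies the determinant by a nonzero square. So $\det(A)$ is a square if and only if $\epsilon^k$ is a square, which forces $k=0$. Hence $A=Q\iden_nQ^T=QQ^T$ for some invertible $Q$, proving sufficiency.

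The only nontrivial point is the $2\times 2$ congruence in Step 2. It is dispatched exactly by the sum-of-two-squares statement \cref{prop:php fin fld}(1), so I expect no real obstacle. Some care is needed only in Step 1, which requires $q$ odd so that anisotropic vectors exist.
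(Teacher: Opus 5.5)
Your argument is correct and complete. The paper gives no proof of this lemma; it is simply quoted from Dickson, so the useful comparison is with tools the paper itself states.

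Your Steps 1 and 2 amount to reproving the full-rank case of Albert's classification, \cref{prop:char_of_symm_q_odd}, which the paper states later in \cref{sec:psd 3}. With that proposition in hand, sufficiency is a one-line consequence of $\det(PXP^T)=\det(P)^2\det(X)$. Indeed, $A$ is congruent either to $\iden_n$ or to $\operatorname{diag}(1,\dots,1,a)$ with $a\in\mathbb{F}_q^{-}$, and the second option forces $\det(A)$ to be a nonsquare.

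What your self-contained route buys is independence from that citation. Its only arithmetic input is the sum-of-two-squares statement \cref{prop:php fin fld}(1). This is the same device the paper uses in proving the case $r<n$ of \cref{thm:psd 3 count}. There, the spare coordinate available when $r<n$ lets a single nonsquare $a=b^2+c^2$ be realized in one row of a singular $B$. In the invertible case no spare coordinate exists, so you must pair nonsquares via $N=\begin{pmatrix} a&b\\-b&a\end{pmatrix}$. The possible leftover single $\epsilon$ is exactly what the determinant detects.

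One detail in Step 1 deserves a sentence: the induction needs the restricted form on the $A$-orthogonal complement $x^{\perp}$ to be nondegenerate, so that it again has an anisotropic vector. This holds because $x^TAx\neq 0$ gives $\mathbb{F}_q^n=\langle x\rangle\oplus x^{\perp}$. Alternatively, run the diagonalization for all symmetric matrices and recover $d_i\neq 0$ from $\det(A)\neq 0$ at the end.
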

 
Recall that $N(n,r)$ is the number of symmetric matrices of rank $r$ in $M_n(\mathbb{F}_q)$ given in \cref{thm:symm rank conunt}. Using \cref{lem:posdef 3 q even} and \cref{lem:posdef 3 q odd}, MacWilliams \cite{Macwilliams1969} calculated the number of matrices in $\PD_3(n)$. 

\begin{theorem}[\cite{Macwilliams1969}] \label{thm:posdef_3 count}
The number of matrices in $\PD_3(n)$ is given by
\[
\pd_3(n) = 
\begin{cases}
\displaystyle 
\prod_{i=0}^{(n-1)/2} \left(q^{n}-q^{2i}\right)
& \text{if $q$ is even and $n$ is odd}, \\[1.0em]

\displaystyle
(q^{n} - 1)\prod_{i=0}^{n/2-1} (q^{n-1} - q^{2i})
& \text{if $q$ is even and $n$ is even}, \\[1.0em]

\displaystyle
\frac{1}{2} N(n,n),
& \text{if $q$ is odd and $n$ is odd} \\[1.2em]

\displaystyle
\frac{1}{2}\,
\frac{q^{n/2} + 1}{q^{n/2}} N(n,n)
& \text{if $q\equiv 1 \pmod 4$ and $n$ is even}, \\[1.2em]

\displaystyle
\frac{1}{2}\,
\frac{q^{n/2} + (-1)^{n/2}}{q^{k}} N(n,n)
& \text{if $q\equiv 3 \pmod 4$ and $n$ is even}.
\end{cases}
\]
\end{theorem}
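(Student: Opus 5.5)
The count splits by the parity of $q$; in each case I reduce to counting invertible symmetric matrices with a simple invariant, using \cref{lem:posdef 3 q even} and \cref{lem:posdef 3 q odd} together with $N(n,n)$ from \cref{thm:symm rank conunt}.

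\textbf{$q$ even.} For an invertible $A$, \cref{lem:posdef 3 q even} gives $A=MM^T$ for some $M\in M_n(\mathbb{F}_q)$. Since $\det A=\det(M)^2\neq 0$, $M$ is automatically invertible. So $\pd_3(n)$ is the number of invertible symmetric matrices with at least one nonzero diagonal entry. This equals $N(n,n)$ minus the number of invertible alternating matrices, i.e.\ symmetric matrices with zero diagonal, which in characteristic two are exactly the alternating ones.
\begin{itemize}
\item For $n$ odd, there are no invertible alternating matrices. Evaluating the product formula for $N(n,n)$ directly gives $\prod_{i=0}^{(n-1)/2}(q^n-q^{2i})$.
\item For $n$ even, the number of nondegenerate alternating forms is $|\mathrm{GL}_n|/|\mathrm{Sp}_n|=q^{n(n-2)/4}\prod_{i=1}^{n/2}(q^{2i-1}-1)$. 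Subtracting this from $N(n,n)$ and factoring should give $(q^n-1)\prod_{i=0}^{n/2-1}(q^{n-1}-q^{2i})$.
\end{itemize}
This algebraic simplification is routine but fiddly. I would verify it by induction on $n$, or first check $n=2$, where the answer is $(q^2-1)(q-1)$.

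\textbf{$q$ odd.} By \cref{lem:posdef 3 q odd}, $\pd_3(n)$ is the number of invertible symmetric $A$ with $\det A$ a nonzero square. Congruence $A\mapsto PAP^T$ changes $\det A$ by the square $\det(P)^2$, so the square class of the determinant is a congruence invariant. Over $\mathbb{F}_q$ with $q$ odd there are exactly two classes of nondegenerate quadratic forms, distinguished by the discriminant. The orbit of $\mathrm{diag}(1,\dots,1,d)$ has size $|\mathrm{GL}_n(\mathbb{F}_q)|/|\mathrm{O}(Q)|$, so the count is
\[
\pd_3(n)=\frac{|\mathrm{GL}_n(\mathbb{F}_q)|}{|\mathrm{O}_n(\mathbb{F}_q)|},
\]
where $\mathrm{O}_n(\mathbb{F}_q)$ is the stabilizer of the identity form.
\begin{itemize}
\item For $n$ odd, the map $A\mapsto cA$ with $c$ a nonsquare multiplies $\det A$ by $c^n$, a nonsquare. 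This is a bijection between the two determinant classes, so $\pd_3(n)=\tfrac12N(n,n)$ immediately.
\item For $n$ even, the two orthogonal groups have orders $2q^{m(m-1)}(q^m-\epsilon)\prod_{i=1}^{m-1}(q^{2i}-1)$, where $m=n/2$ and $\epsilon=\pm1$ is the Witt type. The identity form $\iden_n$ is hyperbolic ($\epsilon=+1$) iff $(-1)^m$ is a square, i.e.\ iff $q\equiv1\pmod 4$ or $m$ is even.
\end{itemize}
Summing the two orbit sizes recovers $N(n,n)$. The ratio of the orbit of $\iden_n$ to $N(n,n)$ is then
\[
\frac{1}{|\mathrm{O}_\epsilon|}\Big/\Big(\frac{1}{|\mathrm{O}_+|}+\frac{1}{|\mathrm{O}_-|}\Big)=\frac{q^m+\epsilon}{2q^m},
\]
with $\epsilon=1$ for $q\equiv1\pmod4$ and $\epsilon=(-1)^{m}$ for $q\equiv3\pmod4$. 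This gives the stated formulas, reading the exponent $k$ as $n/2$.

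\textbf{Main obstacle.} The hardest step is pinning down which Witt type $\iden_n$ has in each case. I would settle it by computing the discriminant $(-1)^m\det$ against the hyperbolic plane, combined with \cref{thm:x^2=-1}.
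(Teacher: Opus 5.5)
Your proof is correct. Its first step is the same reduction the paper attributes to MacWilliams. By \cref{lem:posdef 3 q even} together with $\det A=\det(M)^2$, and by \cref{lem:posdef 3 q odd}, $\PD_3(n)$ is the set of invertible symmetric matrices with a nonzero diagonal entry when $q$ is even, and with square determinant when $q$ is odd.

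You diverge in how these classes are counted. The paper gives no proof of its own, and in its logical order $\pd_3(n)$ comes first. It then \emph{derives} \cref{thm:orth matrices count} via $|\mathrm{O}_n(\mathbb{F}_q)|=|\mathrm{GL}_n(\mathbb{F}_q)|/\pd_3(n)$. You run this in reverse: you take $|\mathrm{Sp}_{2m}(q)|$ and $|\mathrm{O}^{\pm}_{2m}(q)|$ as known and apply orbit--stabilizer. That is legitimate, but you must import those group orders from the classical literature, where they are proved independently (e.g.\ by counting isotropic vectors and using Witt's theorem). Citing \cref{thm:orth matrices count} would be circular here.

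Your route gains conceptual clarity. The factor $(q^{m}+\epsilon)/(2q^{m})$ appears as the relative size of the two congruence orbits. The split between $q\equiv1$ and $q\equiv 3\pmod 4$ is explained by the Witt type of $\iden_{2m}$, and your criterion $(-1)^m\in(\mathbb{F}_q^\times)^2$ is right. The scaling bijection for odd $n$ is also clean. The cited elementary route instead needs no group orders and yields them as a by-product. Your reading of $q^{k}$ as $q^{n/2}$ in the last case is the intended one.

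Finally, the even-$q$ simplification you flag as fiddly is immediate. For $n=2m$, \cref{thm:symm rank conunt} gives $N(2m,2m)=q^{m(m+1)}\prod_{i=1}^{m}(q^{2i-1}-1)$. Your count of invertible alternating matrices is $q^{m(m-1)}\prod_{i=1}^{m}(q^{2i-1}-1)$. Their difference, $q^{m(m-1)}(q^{2m}-1)\prod_{i=1}^{m}(q^{2i-1}-1)$, is exactly $(q^{n}-1)\prod_{i=0}^{m-1}(q^{n-1}-q^{2i})$.
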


\subsection{Type 4}
\label{sec:posdef_4}

\begin{definition}
A symmetric matrix $A \in \mathrm{GL}_n(\mathbb{F}_q)$ is said to be \textit{positive definite of type 4} (denoted by $\PD_4$) if there exists an orthogonal matrix $U\in \mathrm{O}_n(\mathbb{F}_q)$ and a diagonal matrix $D\in \mathrm{GL}_n(\mathbb{F}_q)$ such that $A=U^{-1}DU$. 
We will denote the set of $\PD_4$ matrices in $\mathrm{GL}_n(\mathbb{F}_q)$ by $\PD_4(n)$ and let $\pd_4(n)=|\PD_4(n)|$.
\end{definition}

\begin{remark}\label{rem:posdef 4 and posdef 3 rel}
In the above definition, observe that $D$ can be written as $\operatorname{diag}(a_1^2,\dots,a_n^2)$, where $a_i\in \mathbb{F}_q^{\times}$ for all $i\in [n]$. Let $\sqrt{D}=\operatorname{diag}(a_1,\dots,a_n)$. Then
\[
    A=U^{-1}DU
    =U^{-1}\sqrt{D}\sqrt{D}^{T}{U^{-1}}^{T}
    =U^{-1}\sqrt{D}(U^{-1}\sqrt{D})^{T}.
\]
Hence, $A$ is a $PD_3$ matrix. Therefore, $\PD_4(n)\subseteq\PD_3(n)$ for all $n\geq 1$.
\end{remark}

We will say two matrices $A,B \in M_n(\mathbb{F})$ are \textit{orthogonally similar} if $B = U^{-1}AU$ for some orthogonal matrix $U \in \mathrm{O}_n(\mathbb{F})$. More generally, if $\mathcal{G}$ is a subgroup of $\mathrm{GL}_n(\mathbb{F})$, then two matrices $A,B \in M_n(\mathbb{F})$ are said to be \textit{$\mathcal{G}$-similar} if $B = X^{-1}AX$ for some $X \in \mathcal{G}$. Thus, a matrix is $\PD_4$ if and only if it is orthogonally similar to a diagonal matrix with positive diagonal entries.

\begin{remark}
Since $1$ is the only positive element in both $\mathbb{F}_2$ and $\mathbb{F}_3$, the identity matrix is the only diagonal matrix whose diagonal entries are all positive. Hence, for these fields, the identity matrix is the unique matrix in $\mathrm{PD}_4(2)$ and $\mathrm{PD}_4(3)$.
\end{remark}  

To determine the number of matrices in $\PD_4(n)$ over $\mathbb{F}_q$, we examine the conjugation action of $\mathrm{O}_n(\mathbb{F}_q)$ on $M_n(\mathbb{F}_q)$. Specifically, we study the equivalence classes of matrices under orthogonal similarity. Recall the $R=\mathbb{F}_q[x]$-module structure on $M_A=\mathbb{F}_q^n$ defined in \cref{sec:fg_mod_PID}, where the action of $a\in\mathbb{F}_q\subset R$ and $x$ on $v\in\mathbb{F}_q^n$ is given by $a\cdot v=av$ and $x\cdot v=Av$, respectively.

We first introduce some notation and definitions needed for the proof. Let $\mathcal{I}$ denote the set of irreducible polynomials in $\mathbb{F}_q[x]$, and let $\mathcal{P}$ denote the set of all partitions of positive integers.

Fix a matrix $A\in M_n(\mathbb{F}_q)$. Define a map $\varphi_A:\mathcal{I}\to\mathcal{P}$ by assigning to each irreducible polynomial $f(x)$ the partition $\lambda^A(f(x))=(\lambda_1^A(f(x)),\lambda_2^A(f(x)),\dots)$ such that the decomposition of $M_A$ in \cref{cor:fun thm mod PID} can be written as
\begin{equation}
M_A\cong\bigoplus_{f\in \mathcal{I}}\bigoplus_{i\geq 1}\mathbb{F}_q[x]/(f(x)^{\lambda_i^A(f(x))}).
\end{equation}
For an irreducible polynomial $f\in\mathcal{I}$, let $V_f \subseteq \mathbb{F}_q^n$ denote the set of vectors annihilated by some power of $f$, namely
\[
V_f=\{v\in\mathbb{F}_q^n\mid f(A)^r v=0 \text{ for some } r\geq 1\}.
\]
Since $v\in V_f$ implies $Av\in V_f$, each $V_f$ is an $A$-invariant subspace. If $f_1(x),\dots,f_m(x)$ are the irreducible polynomials appearing in the decomposition of $M_A$, then
\[
\mathbb{F}_q^n=\bigoplus_{i=1}^{m}V_{f_i}.
\]
For $f\in\mathcal{I}$ and $B\in M_n(\mathbb{F}_q)$, define the linear transformation $B_f$ by
\[
B_f(v)=
\begin{cases}
Bv & \text{if } v\in V_f,\\
0 & \text{if } v\notin V_f.
\end{cases}
\]

The following lemma characterizes the commutativity of two linear transformations in terms of their restrictions to the primary components.

\begin{lemma}\label{lem:mat comm A}
Let $f_1(x),\dots,f_m(x)$ be the irreducible polynomials appearing in the decomposition of $M_A$. Then $A$ commutes with $S$ if and only if $A_{f_i}$ commutes with $S_{f_i}$ for all $i\in[m]$.
\end{lemma}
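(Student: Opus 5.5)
The plan is to treat the lemma as a statement about the primary decomposition $\mathbb{F}_q^n=\bigoplus_{i=1}^m V_{f_i}$ attached to $A$. For each $i$ let $P_i$ be the projection onto $V_{f_i}$ along $\bigoplus_{j\neq i}V_{f_j}$. The linear reading of the definition of $B_f$ is $B_{f_i}=BP_i$: it acts as $B$ on $V_{f_i}$ and as $0$ on the other components.

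Before starting I want to fix which reading of the statement I prove. Under $B_{f_i}=BP_i$ the converse implication is false. For example, take $A=\operatorname{diag}(0,1)$, with components $V_x=\langle e_1\rangle$ and $V_{x-1}=\langle e_2\rangle$, and let $S$ be given by $Se_1=e_2$, $Se_2=0$. Then $A_x=0$ and $S_{x-1}=0$, so both pairs commute, yet $ASe_1=e_2\neq 0=SAe_1$. So I read "$A_{f_i}$ commutes with $S_{f_i}$" in the only way that makes both sides endomorphisms of the same space $V_{f_i}$, namely the way used when one speaks of the restriction $A|_{V_{f_i}}$. That is:
\begin{itemize}
\item $S_{f_i}$ denotes the restriction of $S$ to $V_{f_i}$, viewed as an operator on $V_{f_i}$, which presupposes $S(V_{f_i})\subseteq V_{f_i}$;
\item commutation is then commutation in $\mathrm{End}(V_{f_i})$.
\end{itemize}
Equivalently, $S$ is block diagonal for the decomposition and the diagonal blocks commute with the corresponding blocks of $A$. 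I will state this reading explicitly at the start of the proof.

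For the forward direction, assume $AS=SA$. The key step is that $S$ stabilizes each $V_{f_i}$. If $f_i(A)^r v=0$, then $f_i(A)^r Sv=Sf_i(A)^r v=0$, because $S$ commutes with every polynomial in $A$. So $S_{f_i}$ is a well-defined operator on $V_{f_i}$, and $A_{f_i}$ is well defined on $V_{f_i}$ since $V_{f_i}$ is $A$-invariant. For $v\in V_{f_i}$ we then get $A_{f_i}S_{f_i}v=ASv=SAv=S_{f_i}A_{f_i}v$. In the extension-by-zero language the same computation reads $A P_i S P_i=ASP_i=SAP_i=SP_iAP_i$. This uses the standard fact that each $P_i$ is a polynomial in $A$ (by the Chinese remainder theorem on the coprime factors $f_i^{\alpha}$ of the minimal polynomial), so $P_i$ commutes with both $A$ and $S$.

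For the converse, assume that for every $i$, $S$ maps $V_{f_i}$ into itself and $A_{f_i}S_{f_i}=S_{f_i}A_{f_i}$ on $V_{f_i}$. Take any $v\in\mathbb{F}_q^n$ and write $v=\sum_i v_i$ with $v_i\in V_{f_i}$. Then
\[
ASv=\sum_i A S v_i=\sum_i A_{f_i}S_{f_i}v_i=\sum_i S_{f_i}A_{f_i}v_i=\sum_i SAv_i=SAv,
\]
where each step uses that both $A$ and $S$ keep $v_i$ inside $V_{f_i}$. Hence $AS=SA$.

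The only real obstacle is the interpretive one just discussed. Once the invariance of each $V_{f_i}$ under $S$ is secured, both directions reduce to linearity over the direct sum. In the forward direction that invariance is proved, not assumed. In the converse it is part of what "$S_{f_i}$ commutes with $A_{f_i}$" means. The example above shows that this hypothesis cannot be dropped when some $f_i=x$. It can be dropped when no $f_i$ equals $x$: from $AP_iSP_i=SAP_i$ one gets $P_jSAP_i=0$ for $j\neq i$, and $A$ is invertible on $V_{f_i}$ when $f_i\neq x$, so $P_jSP_i=0$. I will record this as a remark after the proof.
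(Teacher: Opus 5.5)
Your forward direction is the paper's argument. From $f_i(A)^rSv=Sf_i(A)^rv=0$ one gets $S(V_{f_i})\subseteq V_{f_i}$, and the commutation is then checked on each component.

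You differ from the paper on the converse, and there you are right and the paper is not. Its step $AS(v)=\sum_i A_{f_i}S_{f_i}(v)$ tacitly assumes $S(V_{f_i})\subseteq V_{f_i}$. Your example $A=\operatorname{diag}(0,1)$, $Se_1=e_2$, $Se_2=0$ shows that the converse as literally stated is false. The example works both under the extension-by-zero reading and under the paper's literal (nonlinear) definition of $B_f$.

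Your reading is the correct repair: $S$ is block diagonal for the primary decomposition, with blocks commuting with those of $A$. It is also exactly what \cref{lem:kung thm} uses, since there one only needs that the centralizer of $A$ consists of such block-diagonal $S$.

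Your closing remark is correct as well. Left-multiplying $AP_iSP_i=SAP_i$ by $P_j$ gives $P_jSAP_i=0$, and $A$ is bijective on $V_{f_i}$ when $f_i\neq x$, so $P_jSP_i=0$. This shows the literal statement does hold for invertible $A$, for instance the diagonal matrices with positive entries in \cref{thm:posdef 4 count}.
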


\begin{proof}
Suppose that $A$ commutes with $S$. If $v\in V_{f_i}$, then
\[
f_i(A)^r(Sv)=Sf_i(A)^r(v)=0
\]
for every $r\geq 1$. Hence $Sv\in V_{f_i}$, and therefore $S(V_{f_i})\subseteq V_{f_i}$ for all $i$. It follows that $S$ preserves the same primary decomposition of $\mathbb{F}_q^n$ as $A$. Consequently,
$S_{f_i}(v_j)=0$ whenever $i\neq j$.

Now let $v=\sum_{i=1}^{m}v_i$, where $v_i\in V_{f_i}$ for each $i$. Since $S_{f_i}(v_j)=0$ for $j\neq i$, we have
\[
A_{f_i}S_{f_i}(v)=A_{f_i}S_{f_i}(v_i)=AS(v_i) =SA(v_i) =S_{f_i}A_{f_i}(v_i)=S_{f_i}A_{f_i}(v).
\]
Therefore, $A_{f_i}$ commutes with $S_{f_i}$ for every $i$.

Conversely, suppose that $A_{f_i}S_{f_i}=S_{f_i}A_{f_i}$ for all $i\in [m]$. Then
\[
AS(v)=\sum_{i=1}^{m}A_{f_i}S_{f_i}(v) =\sum_{i=1}^{m}S_{f_i}A_{f_i}(v) =SA(v).
\]
Since this holds for every $v\in\mathbb{F}_q^n$, we conclude that $AS=SA$.
\end{proof}

The following lemma was proved by Kung in the case $\mathcal{G} = \mathrm{GL}_n(\mathbb{F}_q)$. Using similar ideas, we generalize it to any subgroup $\mathcal{G}$ of $\mathrm{GL}_n(\mathbb{F}_q)$.

\begin{lemma}[{\cite{Kung1981}}]
\label{lem:kung thm}
    Let $\mathcal{G}$ be a subgroup of $\mathrm{GL}_n(\mathbb{F}_q)$ and let $A \in M_n(\mathbb{F}_q)$. If $f_1(x), \dots, f_m(x)$ are the polynomials appearing in \cref{cor:fun thm mod PID}, then the number of matrices in $M_n(\mathbb{F}_q)$ that are $\mathcal{G}$-similar to $A$ is given by
\[
    \frac{|\mathcal{G}|}{\prod_{i=1}^m c_{\mathcal{G}}(f_i(x), \varphi_A(f_i))}.
\]
    Here $c_{\mathcal{G}}(f(x), \lambda)$ denotes the number of matrices in $\mathcal{G}$ that commute with the matrix $M_f \in M_{n_f}(\mathbb{F}_q)$ (where $n_f=|\lambda^A(f(x))|\operatorname{deg}(f(x))$) satisfying:
\[
    \varphi_{M_f}(g) = 
    \begin{cases}
        \lambda^A(f(x)) & \text{if } g(x) = f(x), \\
        \varnothing & \text{if } g(x) \neq f(x).
    \end{cases}
\]
\end{lemma}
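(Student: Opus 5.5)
The natural route is orbit--stabilizer for the conjugation action of $\mathcal{G}$ on $M_n(\mathbb{F}_q)$, given by $X\cdot B = X^{-1}BX$. The $\mathcal{G}$-orbit of $A$ is exactly the set of matrices $\mathcal{G}$-similar to $A$, and the stabilizer of $A$ is the centralizer $C_{\mathcal{G}}(A)=\{X\in\mathcal{G} \mid XA=AX\}$. So the count equals $|\mathcal{G}|/|C_{\mathcal{G}}(A)|$, and everything reduces to showing
\[
|C_{\mathcal{G}}(A)|=\prod_{i=1}^m c_{\mathcal{G}}(f_i(x),\varphi_A(f_i)).
\]

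First I would use \cref{lem:mat comm A} on the primary decomposition $\mathbb{F}_q^n=\bigoplus_i V_{f_i}$. Any $S$ commuting with $A$ preserves every $V_{f_i}$. Conversely, $S$ commutes with $A$ exactly when each restriction $S|_{V_{f_i}}$ commutes with $A|_{V_{f_i}}$. Hence, in a basis adapted to the decomposition, the centralizer of $A$ in $M_n(\mathbb{F}_q)$ is the direct product of the centralizers of the blocks $A|_{V_{f_i}}$.

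The restriction $A|_{V_{f_i}}$ is a matrix of size $n_{f_i}=|\lambda^A(f_i)|\deg f_i$ whose module structure is concentrated at $f_i$ with partition $\lambda^A(f_i)$; this is precisely the matrix $M_{f_i}$ of the statement. Moreover, a block-diagonal $S$ is invertible if and only if every block is invertible. In Kung's case $\mathcal{G}=\mathrm{GL}_n(\mathbb{F}_q)$, these facts give the product formula directly, with $c_{\mathcal{G}}(f_i,\lambda)$ being the centralizer of $M_{f_i}$ in $\mathrm{GL}_{n_{f_i}}(\mathbb{F}_q)$.

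The main obstacle is a general subgroup $\mathcal{G}$. Here $C_{\mathcal{G}}(A)=C_{\mathrm{GL}}(A)\cap\mathcal{G}$ factors as a product only if $\mathcal{G}$ is compatible with the primary decomposition. That means $\mathcal{G}\cap(\text{block diagonal})$ should equal $\prod_i \mathcal{G}_i$ for suitable groups $\mathcal{G}_i$ acting on $V_{f_i}$, and $c_{\mathcal{G}}$ should be read as counting elements of $\mathcal{G}_i$. For this step I would first try to read the definition of $c_{\mathcal{G}}$ in exactly that way: $\mathcal{G}_i$ is the subgroup of elements of $\mathcal{G}$ preserving the decomposition and acting trivially off $V_{f_i}$, and I would verify the product decomposition for the case of interest, $\mathcal{G}=\mathrm{O}_n(\mathbb{F}_q)$ with $A$ symmetric. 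There the $V_{f_i}$ are mutually orthogonal, since $A$ is self-adjoint and the $f_i$ are pairwise coprime. An orthogonal map preserving each $V_{f_i}$ is then the same as a tuple of isometries of the restricted forms, so the factorization holds.

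If $\mathcal{G}$ is arbitrary, I would state the lemma under this compatibility hypothesis, which is the only place the argument departs from Kung's. With that in place, orbit--stabilizer finishes the proof.
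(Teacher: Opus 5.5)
Your argument is the paper's: orbit--stabilizer for the conjugation action, with the stabilizer equal to the centralizer $C_{\mathcal{G}}(A)$, which is then factored via \cref{lem:mat comm A}. The paper, however, asserts $|C_{\mathcal{G}}(A)|=\prod_i c_{\mathcal{G}}(f_i,\varphi_A(f_i))$ for an arbitrary subgroup $\mathcal{G}$ with no justification, and you are right to object to exactly that step. \cref{lem:mat comm A} only says that the centralizer in $M_n(\mathbb{F}_q)$ consists of block-diagonal maps with commuting blocks, and intersecting with $\mathcal{G}$ need not give a product.

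For example, take $\mathcal{G}=\mathrm{SL}_2(\mathbb{F}_q)$ with $q\ge 3$ and $A=\operatorname{diag}(a,b)$, $a\neq b$. The centralizer $\{\operatorname{diag}(t,t^{-1})\}$ has $q-1$ elements, so the orbit has $q(q+1)$ elements. The stated formula gives $q(q^2-1)$, whether the factors are read as $|\mathrm{SL}_1|=1$ or as your trivial groups $\mathcal{G}_i$. So the compatibility hypothesis you add is genuinely needed.

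Your reading of $c_{\mathcal{G}}$ through the groups $\mathcal{G}_i$ attached to $A$'s own decomposition is also forced. Even for $\mathcal{G}=\mathrm{O}_n(\mathbb{F}_q)$, the factor is the centralizer in the isometry group of the form restricted to $V_{f_i}$, and this is not determined by $(f_i,\lambda)$ alone. For $n=3$ and $q$ odd, a symmetric $A$ with a $2$-dimensional eigenspace $V$ contributes a factor of order $2(q-1)$ or $2(q+1)$, depending on the discriminant of the form on $V$.

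Your verification for $\mathrm{O}_n(\mathbb{F}_q)$ and symmetric $A$ is correct: the $V_{f_i}$ are mutually orthogonal, so an orthogonal map preserving them is a tuple of isometries. In the paper's application to a diagonal $D$, the $V_{f_i}$ are coordinate subspaces carrying the standard form, so the factors are $|\mathrm{O}_{k_i}(\mathbb{F}_q)|$, exactly as used in \cref{thm:posdef 4 count}.
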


\begin{proof}
Recall that the orbit-stabilizer theorem states that if a group $\mathcal{G}$ acts on a set $X$, then for any $x \in X$, we have 
\[
|\mathcal{G}x| = \frac{|\mathcal{G}|}{|\mathcal{G}_x|},
\]
where $\mathcal{G}x = \{g \cdot x \mid g \in \mathcal{G}\}$ is the orbit of $x$, and $\mathcal{G}_x = \{g \in \mathcal{G} \mid g \cdot x = x\}$ is the stabilizer of $x$.

Consider the conjugation action of $\mathcal{G}$ on the set $X = M_n(\mathbb{F}_q)$, that is, for $X = M_n(\mathbb{F}_q)$ and $S\in \mathcal{G}$, $S \cdot A = SAS^{-1}$. Then $S \in \mathcal{G}_A$ if and only if $S$ and $A$ commute. Therefore, in this case, the number of matrices $\mathcal{G}$-similar to $A$, is given by
\begin{equation}\label{eq:posdef 4 orbit stab}
    |\mathcal{G}A| = \frac{|\mathcal{G}|}{|c(A)|},
\end{equation}
where $c(A)$ is the set of matrices in $\mathcal{G}$ that commute with $A$.

We now need to compute the the number of matrices in $\mathcal{G}$, which commute with $A$. By \cref{lem:mat comm A}, we have
\[
|c(A)| = \prod_{i=1}^m c_{\mathcal{G}}(f_i, \varphi_A(f_i)).
\]
Substituting this into \eqref{eq:posdef 4 orbit stab}, we obtain
\[
|\mathcal{G}A| = \frac{|\mathcal{G}|}{\prod_{i=1}^m c_{\mathcal{G}}(f_i(x), \varphi_A(f_i))},
\]
completing the proof.
\end{proof}

Consider the quotient group of $\mathrm{O}_n(\mathbb{F}_q)$ over $\mathrm{GL}_n(\mathbb{F}_q)$. Each matrix $M_1\in \mathrm{GL}_n(\mathbb{F}_q)$ can be associated with a symmetric matrix $MM^T$. This give a bijective correspondence between the quotient group and the set of symmetric matrices which can be written in the form $MM^T$ for some $M\in \mathrm{GL}_n(\mathbb{F}_1)$. Using this idea together with \cref{thm:posdef_3 count}, we can calculate the number of orthogonal matrices in $\mathrm{GL}_n(\mathbb{F}_q)$. 

\begin{theorem}(\cite{Macwilliams1969})\label{thm:orth matrices count}
The number of orthogonal matrices in $\mathrm{GL}_n(\mathbb{F}_q)$ is
\[
|\mathrm{O}_n(\mathbb{F}_q)|= 
\begin{cases}
\displaystyle
q^{k}\prod_{i=0}^{k-1} \left(q^{n-1}-q^{2i}\right)
& \text{if $q$ is even and $n=2k+1$}, \\[1em]

\displaystyle
q^{k}\prod_{i=1}^{k-1} \left(q^{n}-q^{2i}\right)
& \text{if $q$ \text{is even and} $n=2k$}, \\[1em]

\displaystyle
2q^k\prod_{i=0}^{k-1} (q^{n-1}-q^{2i})
& \text{if $q$ is odd and $n=2k+1$}, \\[1em]

\displaystyle
2(q^k-1)\prod_{i=1}^{k-1} (q^{n}-q^{2i})
& \text{if $q\equiv 1 \pmod 4$ and $n=2k$}, \\[1em]

\displaystyle
2(q^k+(-1)^{k+1})\prod_{i=1}^{k-1} (q^{n}-q^{2i})
& \text{if $q\equiv 3 \pmod 4$ and $n=2k$}.
\end{cases}
\]
\end{theorem}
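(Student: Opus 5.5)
Following the hint the paper gives just before the statement, I will count $\mathrm{O}_n(\mathbb{F}_q)$ via the map
\[
\Phi:\mathrm{GL}_n(\mathbb{F}_q)\to \PD_3(n),\qquad M\mapsto MM^T .
\]
This map is surjective by the definition of $\PD_3(n)$. Its fibres are exactly the left cosets of $\mathrm{O}_n(\mathbb{F}_q)$. Indeed, $MM^T=NN^T$ iff $(N^{-1}M)(N^{-1}M)^T=\iden_n$, i.e.\ iff $N^{-1}M\in\mathrm{O}_n(\mathbb{F}_q)$. The subtlety is that $\mathrm{O}_n$ is defined by $U^TU=\iden_n$, but for square matrices this is equivalent to $UU^T=\iden_n$. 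Lagrange's theorem then gives
\[
|\mathrm{O}_n(\mathbb{F}_q)|=\frac{|\mathrm{GL}_n(\mathbb{F}_q)|}{\pd_3(n)},\qquad |\mathrm{GL}_n(\mathbb{F}_q)|=\prod_{j=0}^{n-1}(q^n-q^j)=q^{\binom n2}\prod_{j=1}^n(q^j-1).
\]
So the whole proof reduces to dividing this by the five expressions in \cref{thm:posdef_3 count}.

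For $q$ odd I will substitute $\pd_3(n)$ in terms of $N(n,n)$, and compute $N(n,n)$ from \cref{thm:symm rank conunt}:
\[
N(n,n)=\prod_{i=1}^{\lfloor n/2\rfloor}\frac{q^{2i}}{q^{2i}-1}\prod_{j=1}^{n}(q^j-1).
\]
The factor $\prod_j(q^j-1)$ cancels against $|\mathrm{GL}_n|$, which leaves
\[
\frac{2q^{\binom n2}}{q^{2+4+\dots+2\lfloor n/2\rfloor}}\prod_{i=1}^{\lfloor n/2\rfloor}(q^{2i}-1),
\]
divided in the even case by the correction factor $(q^{k}\pm1)/q^{k}$ with $n=2k$. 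I will then regroup this into the stated product form.

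For $n=2k+1$, I will rewrite $\prod_{i=0}^{k-1}(q^{2k}-q^{2i})$ as $q^{2\binom k2}\prod_{i=1}^{k}(q^{2i}-1)$ and check that the powers of $q$ match. The exponent comparison is $\binom n2-k(k+1)+k = k(2k+1)-k^2-k+k$ against $k^2-k+k$, and the two sides should agree.

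For $n=2k$, I will use $q^{2k}-1=(q^k-1)(q^k+1)$. This cancels the correction factor $q^k\pm1$ from $\pd_3$ and leaves the other factor $q^k\mp1$, matching the stated $2(q^k-1)$ or $2(q^k+(-1)^{k+1})$. Some care is needed with the $q\equiv3\pmod4$ sign $(-1)^{k}$ and with the stray $q^{k}$ written in the paper's $\pd_3$ formula, which I would read as $q^{n/2}$.

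For $q$ even, I will divide $|\mathrm{GL}_n|$ directly by Albert-type products. With $n=2k+1$, the denominator $\prod_{i=0}^{k}(q^n-q^{2i})$ consists of the factors $q^n-q^j$ of $|\mathrm{GL}_n|$ for even $j\le 2k$. The quotient is therefore $\prod_{i=0}^{k-1}(q^n-q^{2i+1})=q^k\prod_{i=0}^{k-1}(q^{n-1}-q^{2i})$, as claimed. The case $n=2k$ is analogous but slightly messier, since the denominator is $(q^n-1)\prod_{i=0}^{k-1}(q^{n-1}-q^{2i})$. Here I will pull out powers of $q$ and pair the odd-index factors $q^n-q^{2i+1}=q(q^{n-1}-q^{2i})$ against the denominator, leaving $q^k\prod_{i=1}^{k-1}(q^n-q^{2i})$.

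I expect the main obstacle to be purely bookkeeping: matching exponents of $q$ and the $\pm$ signs in the even-dimensional odd-$q$ cases. The coset argument itself is routine. I would verify each case numerically for $n=1,2$ (for example, $|\mathrm{O}_2(\mathbb{F}_3)|=8$ and $|\mathrm{O}_2(\mathbb{F}_5)|=8$) to catch sign and exponent slips.
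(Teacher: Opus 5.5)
Your proposal is correct and follows exactly the route the paper indicates: the fibres of $M\mapsto MM^T$ on $\mathrm{GL}_n(\mathbb{F}_q)$ are left cosets of $\mathrm{O}_n(\mathbb{F}_q)$, so $|\mathrm{O}_n(\mathbb{F}_q)|=|\mathrm{GL}_n(\mathbb{F}_q)|/\pd_3(n)$, and dividing by the five cases of \cref{thm:posdef_3 count} (reading the stray $q^k$ as $q^{n/2}$) gives the stated formulas; your factor-pairing for even $q$ and the cancellation via $q^{2k}-1=(q^k-1)(q^k+1)$ for odd $q$ and even $n$ both check out. The only slip is in your exponent check for $q$ odd, $n=2k+1$: the $+k$ on the left is spurious, since as written it gives $k^2+k$ against $k^2$, whereas the correct comparison is $\binom n2-k(k+1)=k^2$ against $k+2\binom k2=k^2$.
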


Now, we are ready to give the formula for the number of matrices in $\PD_4(n)$. 

\begin{theorem}\label{thm:posdef 4 count}
The number of matrices in $\PD_4(n)$ is given by
\[
\pd_4(n)=\sum_{\substack{k_1 + k_2 + \dots + k_s = n \\ k_i \geq 0}} 
\frac{|\mathrm{O}_n(\mathbb{F}_q)|}{|\mathrm{O}_{k_1}(\mathbb{F}_q)||\mathrm{O}_{k_2}(\mathbb{F}_q)|\dots|\mathrm{O}_{k_s}(\mathbb{F}_q)|},
\]
where $s=|\mathbb{F}_q^{+}|$. 
\end{theorem}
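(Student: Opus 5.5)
Let the positive elements be $\mathbb{F}_q^{+}=\{c_1,\dots,c_s\}$. Every $\PD_4$ matrix is orthogonally similar to a diagonal matrix with positive diagonal entries. By conjugating with a permutation matrix, which is orthogonal, we may reorder the diagonal, so we may take the diagonal matrix to be
\[
D(k_1,\dots,k_s)=\operatorname{diag}(c_1\iden_{k_1},\,c_2\iden_{k_2},\,\dots,\,c_s\iden_{k_s}),
\]
with $k_1+\dots+k_s=n$ and $k_i\geq 0$. The plan is to show three things: $\PD_4(n)$ is the disjoint union of the $\mathrm{O}_n(\mathbb{F}_q)$-similarity orbits of these block scalar matrices; distinct tuples $(k_1,\dots,k_s)$ give distinct orbits; and the orbit of $D(k_1,\dots,k_s)$ has size $|\mathrm{O}_n(\mathbb{F}_q)|/\prod_i|\mathrm{O}_{k_i}(\mathbb{F}_q)|$. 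Summing over the tuples then gives the formula.

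\emph{Disjointness.} Orthogonal similarity is in particular $\mathrm{GL}_n$-similarity, so it preserves the characteristic polynomial $\prod_i (x-c_i)^{k_i}$. This polynomial recovers the tuple $(k_1,\dots,k_s)$, so different tuples give disjoint orbits. Every matrix in such an orbit is symmetric, since $U^{-1}DU=U^TDU$, and invertible. Hence the orbits lie in $\PD_4(n)$ and exhaust it.

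\emph{Orbit size.} We apply \cref{lem:kung thm} with $\mathcal{G}=\mathrm{O}_n(\mathbb{F}_q)$, or equivalently orbit--stabilizer directly. The stabilizer consists of the orthogonal $U$ commuting with $D=D(k_1,\dots,k_s)$. Since the $c_i$ are distinct, the primary components $V_{x-c_i}$ are exactly the coordinate blocks of sizes $k_i$. By \cref{lem:mat comm A}, $U$ commutes with $D$ if and only if $U$ preserves each block, i.e. $U$ is block diagonal $\operatorname{diag}(U_1,\dots,U_s)$ with $U_i\in \mathrm{GL}_{k_i}(\mathbb{F}_q)$; commuting with the scalar $c_i\iden_{k_i}$ on each block imposes nothing further. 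Such a $U$ is orthogonal exactly when $U^TU=\operatorname{diag}(U_1^TU_1,\dots,U_s^TU_s)=\iden_n$, that is, when each $U_i\in\mathrm{O}_{k_i}(\mathbb{F}_q)$. So the stabilizer is isomorphic to $\prod_i \mathrm{O}_{k_i}(\mathbb{F}_q)$, with the convention $|\mathrm{O}_0|=1$. This gives the orbit size $|\mathrm{O}_n(\mathbb{F}_q)|/\prod_i |\mathrm{O}_{k_i}(\mathbb{F}_q)|$.

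The main point needing care is the block-preservation step: a matrix commuting with $D$ must map each eigenspace $\ker(D-c_i)$ into itself. This follows since $DUv=UDv=c_iUv$ for $v$ in that eigenspace. The rest is bookkeeping. One should also note that when some $k_i=0$, the corresponding factor is $1$, which matches the summation range $k_i\geq 0$.
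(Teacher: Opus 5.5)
Your proof is correct and follows the same route as the paper: orbit--stabilizer for the conjugation action of $\mathrm{O}_n(\mathbb{F}_q)$ on the block-scalar diagonal matrices (the paper phrases this through its generalized Kung lemma). In both, the stabilizer is $\prod_i \mathrm{O}_{k_i}(\mathbb{F}_q)$ and the orbit sizes are summed over multiplicity tuples $(k_1,\dots,k_s)$. Your version is more careful than the paper's on two points the paper leaves implicit: you show the stabilizer is exactly (not merely contains) $\prod_i \mathrm{O}_{k_i}(\mathbb{F}_q)$, and you show distinct tuples give disjoint orbits, so the sum runs over orbits rather than over all diagonal matrices.
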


\begin{proof}
We want to find the matrices which are orthogonally similar to a diagonal matrix with positive diagonal entries. For a fixed diagonal matrix 
\[D=\operatorname{diag}(\underbrace{a_1, \dots, a_1}_{k_1 \text{ times}},\underbrace{a_2, \dots, a_2}_{k_2 \text{ times}},\dots,\underbrace{a_r, \dots, a_r}_{k_r \text{ times}})\in \mathrm{GL}_n(\mathbb{F}_q),
\] 
where $a_i$'s are distinct positive elements
\[
\varphi_D(f) = 
    \begin{cases}
        (1^{k_i}) & \text{if } f = x - a_i, \\
        \varnothing & \text{otherwise}.
    \end{cases}
\]
Note that every matrix in $\mathrm{O}_{k_i}(\mathbb{F}_q)$ commutes with $\operatorname{diag}(\underbrace{a_1, \dots, a_1}_{k_i \text{ times}})$. Therefore, in this case, $c_{\mathcal{G}}(f_i, \varphi_D(f_i)) = |\mathrm{O}_{k_i}(\mathbb{F}_q)|$, where $f_i(x)=x-a_i$. Applying \cref{lem:kung thm} with $\mathcal{G}= \mathrm{O}_n(\mathbb{F}_q)$, the number of matrices orthogonal similar to $D$ is
\[
\frac{|\mathrm{O}_n(\mathbb{F}_q)|}{|\mathrm{O}_{k_1}(\mathbb{F}_q)||\mathrm{O}_{k_2}(\mathbb{F}_q)|\cdots|\mathrm{O}_{k_r}(\mathbb{F}_q)|}
\]
Taking summation over all the diagonal matrix will give the desire result.
\end{proof}

\begin{corollary}\label{cor:posdef 4 gen func}
The sequence $(\pd_4(n))_{n\geq 0}$ satisfies the generating function-type equation,
\[
\sum_{n\geq0}\pd_4(n)\frac{x^n}{|\mathrm{O}_n(\mathbb{F}_q)|}=\left(\sum_{k\geq 0}\frac{x^k}{|\mathrm{O}_k(\mathbb{F}_q)|}\right)^s,
\]
where $s=|\mathbb{F}_q^{+}|$.
\end{corollary}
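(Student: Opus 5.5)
The plan is to derive \cref{cor:posdef 4 gen func} directly from \cref{thm:posdef 4 count} by reading that formula as a coefficient extraction in a product of power series. Divide both sides of the formula in \cref{thm:posdef 4 count} by $|\mathrm{O}_n(\mathbb{F}_q)|$. This gives
\[
\frac{\pd_4(n)}{|\mathrm{O}_n(\mathbb{F}_q)|}=\sum_{\substack{k_1+\dots+k_s=n\\ k_i\geq 0}}\prod_{i=1}^{s}\frac{1}{|\mathrm{O}_{k_i}(\mathbb{F}_q)|}.
\]
The right-hand side is exactly the coefficient of $x^n$ in $\left(\sum_{k\geq0}x^k/|\mathrm{O}_k(\mathbb{F}_q)|\right)^s$. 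This follows from the Cauchy product rule for multiplying $s$ formal power series: we sum over ordered $s$-tuples $(k_1,\dots,k_s)$ of nonnegative integers with total $n$. Multiplying by $x^n$ and summing over $n\geq 0$ then yields the claimed identity of formal power series.

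Two small points of bookkeeping need checking. First, the convention $|\mathrm{O}_0(\mathbb{F}_q)|=1$ and $\pd_4(0)=1$ should be fixed, so that the constant terms agree (both sides equal $1$). It also makes parts with $k_i=0$ contribute the factor $1$.

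Second, the sum in \cref{thm:posdef 4 count} must genuinely run over ordered compositions into exactly $s$ nonnegative parts, one part for each positive element $a_i\in\mathbb{F}_q^{+}$. This is the step I expect to need the most care, since the proof of \cref{thm:posdef 4 count} lists only the distinct eigenvalues $a_1,\dots,a_r$ that actually occur. To settle it, fix an ordering $a_1,\dots,a_s$ of $\mathbb{F}_q^{+}$ and let $k_i$ be the multiplicity of $a_i$, allowing zeros. Then $\PD_4(n)$ is the disjoint union over tuples $(k_1,\dots,k_s)$ of the orthogonal similarity orbits of the corresponding diagonal matrices. Each orbit has size $|\mathrm{O}_n(\mathbb{F}_q)|/\prod_i|\mathrm{O}_{k_i}(\mathbb{F}_q)|$.

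The orbits are disjoint because orthogonally similar matrices have the same characteristic polynomial, so the tuple of multiplicities is an invariant. The permuted diagonal forms of a given tuple lie in one orbit, since permutation matrices are orthogonal. Hence each matrix is counted exactly once, and the identity follows.
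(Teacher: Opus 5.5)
Your proposal is correct and matches the paper's argument. The paper states this corollary without a separate proof, as an immediate consequence of \cref{thm:posdef 4 count}: divide by $|\mathrm{O}_n(\mathbb{F}_q)|$ and read the sum over weak compositions $k_1+\dots+k_s=n$ as the coefficient of $x^n$ in the $s$-th power. Your extra bookkeeping is sound and makes explicit what the theorem's proof leaves implicit: the conventions $|\mathrm{O}_0(\mathbb{F}_q)|=\pd_4(0)=1$, and the reindexing over all of $\mathbb{F}_q^{+}$ with zero multiplicities allowed (the stabilizer of $\operatorname{diag}(a_1\iden_{k_1},\dots,a_s\iden_{k_s})$ is indeed $\prod_i\mathrm{O}_{k_i}(\mathbb{F}_q)$).
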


\subsection{Relating these definitions}
\label{sec:rel posdef}

It is evident that the equivalence criteria for positive definite matrices over the real and complex fields given in \cref{thm:posdef RC} are in general not equivalent over finite fields.
In this section, we examine how two classically equivalent criteria of positive definiteness behave when considered over a finite field. We will do that by investigating if there exists any relationship between the corresponding sets of such matrices.
The Venn diagrams in \cref{fig:pd_relations} summarize the relations between the sets of positive definite matrices of various types\footnote{It should follow from \cite[Theorem 23]{CooperHannaWhitlatch2022} that $\PD_1, \PD_2$ and $\PD_3$ matrices are the same for definite fields. But there is a mistake in their argument because the Gram matrix formed by linearly independent vectors, which is a $\PD_3$ matrix, need not be a $\PD_1$ matrix.}. 
The justification is below.

\begin{figure}[h]
\centering

\begin{subfigure}{0.45\textwidth}
\centering

\begin{tikzpicture}[scale=0.8]


\draw[thick] (-4.5,0) ellipse (3.8 and 2.3);
\node at (-1.7,0) {$\PD_{3}$};

\draw[thick] (-5.8,0) ellipse (2.1 and 1.2);
\node at (-6.3,0) {$\PD_{1}=\PD_{2}$};

\draw[thick] (-3.5,0) circle (1.2);
\node at (-3.1,0) {$\PD_{4}$};

\end{tikzpicture}
\caption{$\mathbb{F}_q$ definite with $q\geq 7$}
\end{subfigure}
\begin{subfigure}{0.45\textwidth}
\centering
\begin{tikzpicture}[scale=0.8]

\draw[thick] (5,0) ellipse (3.5 and 2.3);
\node at (5,-1.9) {$\PD_{3}$};

\draw[thick] (4.3,0) ellipse (2.0 and 1.2);
\node at (2.9,0) {$\PD_{1}$};

\draw[thick] (4.5,0) circle (1);
\node at (4.1,0) {$\PD_{2}$};

\draw[thick] (6.2,0) circle (1.5);
\node at (7,0) {$\PD_{4}$};
\end{tikzpicture}
\caption{$\mathbb{F}_q$ indefinite with $q\geq 9$}
\end{subfigure}

\caption{Relations between the sets of positive definite matrices of various types over $\mathbb{F}_q$ for $n\geq 2$.} 

\label{fig:pd_relations}

\end{figure}

\subsubsection{$\PD_{1}(n)$ and $\PD_{2}(n)$:}
\label{sec:PD_1_PD_2_rel} 
From \cref{sec:posdef 2}, we know that $\PD_{1}(n)=\PD_{2}(n)$ when the field is definite, and that $\PD_{2}(n)\subseteq \PD_{1}(n)$ when the field is indefinite. 
We will now construct matrices in $\PD_1(n)\setminus\PD_2(n)$ when $\mathbb{F}_q$ is indefinite. By \cref{thm:x^2+y^2=1}, for $q\geq 7$, the equation $x^2+y^2=1$ has more than $4$ solutions. Therefore, there exists $(a,b)\in\mathbb{F}_q^2$ such that $a^2+b^2=1$ and $(a,b)\notin\{(\pm1,0),(0,\pm1)\}$. In particular, $a,b\neq0$. Let $c\in\mathbb{F}_q^{+}$ be such that $c$ does not have a positive square root in $\mathbb{F}_q$. By \cref{prop:php fin fld}(2), we can choose $d_1,d_2\in\mathbb{F}_q^-$ such that $c=d_1+d_2$. Consider the matrix
\[
\begin{aligned}
A_1 
={}&
\left(\begin{array}{c|c}
\begin{matrix}
a & b\\
-b & a
\end{matrix}
&
\mathbf{0}_{2\times(n-2)}
\\
\hline
\mathbf{0}_{(n-2)\times2}
&
\iden_{n-2}
\end{array}\right)
\left(\begin{array}{c|c}
\begin{matrix}
d_1 a^{-2} & 0\\
0 & d_2 b^{-2}
\end{matrix}
&
\mathbf{0}_{2\times(n-2)}
\\
\hline
\mathbf{0}_{(n-2)\times2}
&
\iden_{n-2}
\end{array}\right)
\left(\begin{array}{c|c}
\begin{matrix}
a & -b\\
b & a
\end{matrix}
&
\mathbf{0}_{2\times(n-2)}
\\
\hline
\mathbf{0}_{(n-2)\times2}
&
\iden_{n-2}
\end{array}\right)
\\
={}&
\left(\begin{array}{c|c}
\begin{matrix}
c & d_2 ab^{-1}-d_1 a^{-1}b\\
d_2 ab^{-1}-d_1 a^{-1}b &
d_1 a^{-2} b^2 + d_2 a^2 b^{-2}
\end{matrix}
&
\mathbf{0}_{2\times(n-2)}
\\
\hline
\mathbf{0}_{(n-2)\times2}
&
\iden_{n-2}
\end{array}\right).
\end{aligned}
\]
Then $A_1 \in\PD_1(n)$, since its first entry is positive and all its remaining leading principal minors are equal to its determinant, which is $d_1 d_2/(a^2 b^2)$ and is also positive. 
On the other hand, $A_1 \notin\PD_2(n)$, since $(A_1)_{\{1\}}=c$, which does not have a positive square root in $\mathbb{F}_q$. Therefore, $A_1 \in\PD_1(n)\setminus\PD_2(n)$.
    
\subsubsection{$\PD_{4}(n)$ with $\PD_{1}(n)$ and $\PD_{2}(n)$:}
\label{sec:PD_4_PD_1 2_rel} 
We will now show that there is no inclusion relation between $\PD_{1}(n)$ and $\PD_{4}(n)$, and consequently, no relation between $\PD_{2}(n)$ and $\PD_{4}(n)$ in $M_n(\mathbb{F}_q)$ when $q\geq 7$. 
    
First, let us construct examples of matrices in $\PD_{4}(n)\setminus\PD_{1}(n)$. Let $q$ be odd.  By \cref{thm:x^2+y^2=1}, for $q\geq 7$, we can choose $a_1,b_1\in\mathbb{F}_q$ such that $a_1^2+b_1^2=1$ and $(a_1,b_1)\notin\{(\pm1,0),(0,\pm1)\}$. Let $c_1\in \mathbb{F}_q^{-}$. By \cref{prop:php fin fld}(1), we can choose $d_3,d_4\in \mathbb{F}_q^\times$ such that $c_1=d_3^2+d_4^2$. 
Consider the matrix
\[
\begin{aligned}
A_2 =&\left(\begin{array}{c|c}
\begin{matrix}
a_1 & b_1  \\
-b_1 & a_1  
\end{matrix}
&
\textbf{0}_{2\times(n-2)} \\
\hline
\textbf{0}_{(n-2)\times 2} & \iden_{n-2}
\end{array}\right)
\left(\begin{array}{c|c}
\begin{matrix}
d_3^2a_1^{-2} & 0  \\
0 & d_4^2b_1^{-2}  
\end{matrix}
&
\textbf{0}_{2\times(n-2)} \\
\hline
\textbf{0}_{(n-2)\times 2} & \iden_{n-2}
\end{array}\right)
\left(\begin{array}{c|c}
\begin{matrix}
a_1 & -b_1  \\
b_1 & a_1  
\end{matrix}
&
\textbf{0}_{2\times(n-2)} \\
\hline
\textbf{0}_{(n-2)\times 2} & \iden_{n-2}
\end{array}\right)\\
=& \left(\begin{array}{c|c}
\begin{matrix}
c_1 & d_4^2a_1b_1^{-1}-d_3^{2}a_1^{-1}b_1  \\
d_4^2a_1b_1^{-1}-d_3^{2}a_1^{-1}b_1 & d_3^2a_1^{-2}b_1^{2}+d_4^{2}a_1^2b_1^{-2}  
\end{matrix}
&
\textbf{0}_{2\times(n-2)} \\
\hline
\textbf{0}_{(n-2)\times 2} & \iden_{n-2}
\end{array}\right)
\end{aligned}
\]
Since $\det ((A_2)_{\{1\}})=c_1$ is negative, $A_2 \in \PD_{4}(n)\setminus \PD_{1}(n)$.

Now consider the case when $q$ is even. Again, by \cref{thm:x^2+y^2=1}, for $q\geq8$, we can choose $(a_2,b_2)\in\mathbb{F}_q^2$ such that $a_2^2+b_2^2=1$ and $(a_2,b_2)\notin\{(\pm1,0),(0,\pm1)\}$. 
Consider the matrix
\[
\begin{aligned}
A_3={}&\left(\begin{array}{c|c}
\begin{matrix}
a_2 & b_2\\
-b_2 & a_2
\end{matrix}
&
\mathbf{0}_{2\times(n-2)}
\\
\hline
\mathbf{0}_{(n-2)\times2} & \iden_{n-2}
\end{array}\right)
\left(\begin{array}{c|c}
\begin{matrix}
a_2^{-2} & 0\\
0 & b_2^{-2}
\end{matrix}
&
\mathbf{0}_{2\times(n-2)}
\\
\hline
\mathbf{0}_{(n-2)\times2} & \iden_{n-2}
\end{array}\right)
\left(\begin{array}{c|c}
\begin{matrix}
a_2 & -b_2\\
b_2 & a_2
\end{matrix}
&
\mathbf{0}_{2\times(n-2)}
\\
\hline
\mathbf{0}_{(n-2)\times2} & \iden_{n-2}
\end{array}\right)
\\
={}&\left(\begin{array}{c|c}
\begin{matrix}
0 & a_2b_2^{-1}-a_2^{-1}b_2\\
a_2b_2^{-1}-a_2^{-1}b_2 & a_2^{-2}b_2^2+a_2^2b_2^{-2}
\end{matrix}
&
\mathbf{0}_{2\times(n-2)}
\\
\hline
\mathbf{0}_{(n-2)\times2} & \iden_{n-2}
\end{array}\right).
\end{aligned}
\]
Since $\det((A_3)_{\{1\}})=0$, we have $B\notin\PD_1(n)$. 
On the other hand, $A_3 \in\PD_4(n)$ by construction. 
Therefore, $A_3\in\PD_4(n)\setminus\PD_1(n)$.
    
Now, we will see construction of matrices in $\PD_{2}(n)\setminus \PD_{4}(n)$. First, consider the case when $q$ is odd. Choose $c_2 \in \mathbb{F}_q^{\times}$ such that $c_2^2+1\in \mathbb{F}_q^-$. Consider the matrix
\[
A_4=\left(\begin{array}{c|c}
\begin{matrix}
1 & 0  \\
2c_2 & 1  
\end{matrix}
&
\textbf{0}_{2\times(n-2)} \\
\hline
\textbf{0}_{(n-2)\times 2} & \iden_{n-2}
\end{array}\right)
\left(\begin{array}{c|c}
\begin{matrix}
1 & 2c_2  \\
0 & 1  
\end{matrix}
&
\textbf{0}_{2\times(n-2)} \\
\hline
\textbf{0}_{(n-2)\times 2} & \iden_{n-2}
\end{array}\right)=
\left(\begin{array}{c|c}
\begin{matrix}
1 & 2c_2  \\
2c_2 & 4c_2^2+1  
\end{matrix}
&
\textbf{0}_{2\times(n-2)} \\
\hline
\textbf{0}_{(n-2)\times 2} & \iden_{n-2}
\end{array}\right).
\]
The characteristic polynomial of $A_4$ is $(x-1)^{n-2}(x^2-(4c_2^2+2)x+1)$. The discriminant of the quadratic factor is $16c_2^2(c_2^2+1)$. Since this discriminant is not a square in $\mathbb{F}_q$, the quadratic factor does not split over $\mathbb{F}_q$. Consequently, $A_4$ is not diagonalizable over $\mathbb{F}_q$, and hence $A_4\in \PD_2(n)\setminus \PD_4(n)$.

Now consider the case when $q$ is even. To construct a matrix in $\PD_{2}(n)\setminus \PD_{4}(n)$, choose an irreducible quadratic monic polynomial $f(x)=x^2+a_3 x+b_3 \in \mathbb{F}_q[x]$. Observe that $a_3, b_3 \neq 0$. Let $c_3^2=b_3$. Consider the matrix
\[
A_5=\left(\begin{array}{c|c}
\begin{matrix}
a_3 & c_3 \\
c_3 & 0  
\end{matrix}
&
\textbf{0}_{2\times(n-2)} \\
\hline
\textbf{0}_{(n-2)\times 2} & \iden_{n-2}
\end{array}
\right).
\]
Note that the characteristic polynomial of $A_5$ is $(x-1)^{n-2}f(x)$, and hence it has $n-2$ eigenvalues in $\mathbb{F}_q$. Therefore, it is not diagonalizable and hence $A_5 \in \PD_{2}(n)\setminus \PD_{4}(n)$.

We now construct matrices in $\bigl(\PD_1(n)\cap\PD_4(n)\bigr)\setminus\PD_2(n)$ when $\mathbb{F}_q$ is indefinite. Choose positive elements $a_1,a_2,\dots,a_n\in\mathbb{F}_q^{+}$ such that $a_1$ does not have a positive square root in $\mathbb{F}_q$. Then the matrix $\operatorname{diag}(a_1,a_2,\dots,a_n)$ belongs to $\PD_1(n)\setminus\PD_2(n)$. Moreover, since it is a diagonal matrix, it also belongs to $\PD_4(n)$. Thus, $\operatorname{diag}(a_1,a_2,\dots,a_n)\in\bigl(\PD_1(n)\cap\PD_4(n)\bigr)\setminus\PD_2(n)$.

Lastly, the matrix $A_1$ constructed in \cref{sec:PD_1_PD_2_rel} provides an example of a matrix in $\PD_1(n)\setminus\bigl(\PD_2(n)\cup\PD_4(n)\bigr)$ when the field is indefinite.
    
\subsubsection{$\PD_{3}(n)$ with $\PD_{1}(n),\PD_{2}(n)$ and $\PD_{4}(n)$:}
\label{sec:PD_3_PD_1 2 4_rel} 
By \cref{rem:posdef 4 and posdef 3 rel} and \cref{rem:posdef_2 and posdef_3}, we have $\PD_{2}(n),\PD_{4}(n)\subseteq \PD_{3}(n)$. \cref{thm:posdef 1 and posdef 2 equiv def field} implies that $\PD_{1}(n) \subseteq \PD_{3}(n)$ for a definite field. Moreover, by \cref{lem:posdef 3 q odd}, $\PD_{1}(n) \subseteq \PD_{3}(n)$ when the field is indefinite. 

We now construct matrices in $\PD_3(n)\setminus\bigl(\PD_1(n)\cup\PD_2(n)\cup\PD_4(n)\bigr)$. First consider the case of $q$ odd. 
Let $a_4,a_5 \in\mathbb{F}_q^{-}$. 
The matrix $A_6=\operatorname{diag}(a_4,a_5,1,\dots,1)\in \PD_3(n)$ by \cref{lem:posdef 3 q odd}. 
However, since its $(1,1)$ entry and two of the eigenvalues are negative, 
$A_6 \notin \PD_1(n)\cup \PD_2(n)\cup \PD_4(n)$. 

Now consider the case when $q$ is even. 
Recall the irreducible quadratic monic polynomial $f$ from \cref{sec:PD_4_PD_1 2_rel} and let $c_3^2=b_3$ as before. Consider the matrix:
\[
A_7=\left(\begin{array}{c|c}
\begin{matrix}
0 & c_3\\
c_3 & a_3
\end{matrix}
&
\mathbf{0}_{2\times(n-2)}
\\
\hline
\mathbf{0}_{(n-2)\times 2}
&
\iden_{n-2}
\end{array}\right).
\]
Note that the characteristic polynomial of $A_7$ is $(x-1)^{n-2}f(x)$, and hence it has $n-2$ eigenvalues in $\mathbb{F}_q$. Therefore, $A_7$ is not diagonalizable and $A_7\notin\PD_4(n)$. Moreover, $\det((A_7)_{\{1\}})=0$, and hence $A_7 \notin\PD_1(n)=\PD_2(n)$. By \cref{lem:posdef 3 q even}, we have $A_7 \in\PD_3(n)$.  
Therefore, $A\in\PD_3(n)\setminus\bigl(\PD_1(n)\cup\PD_2(n)\cup\PD_4(n)\bigr)$.

\section{Positive semidefinite matrices}
\label{sec:psd}

In this section, we compute the number of matrices in $M_n(\mathbb{F}_q)$ satisfying each of the conditions in \cref{thm:psd RC}.

\subsection{Type 1} 
\label{sec:psd 1}

Motivated by \cref{cor:posdef type 1 defn real case 1}, we give our first definition of a positive semidefinite matrix.

\begin{definition}\label{def:posdef 1}

A symmetric matrix $A\in M_n(\mathbb{F}_q)$ is said to be \textit{positive semidefinite of type 1} (denoted $PSD_1$), if for each $m \in [n]$, there exists a subset $K_m \subseteq [m]$ such that the principal submatrix $A_{K_m}$ is $PD_1$, and whenever $M \subseteq [m]$ satisfies $|M|>|K_m|$, the principal submatrix $A_M$ is singular. 
We denote the set of $\PSD_1$ matrices in $M_n(\mathbb{F}_q)$ by $\PSD_1(n)$, and let $\psd_1(n,r)$ denote the number of rank $r$ matrices in $\PSD_1(n)$.

\end{definition}

It is evident from the definition that the rank of $A$ is at least $|K_n|$. Suppose the rank of $A$ is $r>|K_n|$. Then there exists a set $M\subseteq [n]$ of size $r$ such that $A_{M,[n]}$ and $A_{[n],M}$ are full rank matrices. But by \cref{lem:bocher thm} this means $A_M$ is nonsingular which contradicts the definition. Therefore, the rank of $A$ is $|K_n|$.

The following theorem reduces the number of principal submatrices that need to be checked in the definition of a $\PSD_1$ matrix.
The proof is identical to that of \cref{thm:psd princ submatrix} and is omitted.

\begin{theorem}
For a symmetric matrix $A\in M_n(\mathbb{F}_q)$, we have $A\in\PSD_1(n)$ if and only if, for each $m\in[n]$, there exists a subset
$K_m\subseteq[m]$ such that $A_{K_m}$ is $\PD_1$, and every principal
submatrix $A_M$ is singular whenever $K_m\subset M\subseteq[m]$ and
$|M|\in\{|K_m|+1,|K_m|+2\}$.
\end{theorem}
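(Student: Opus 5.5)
The plan is to show that the two-level condition in the statement already forces $\operatorname{rank} A_{[m]}=|K_m|$ for each $m$. Once the rank is pinned down, every principal submatrix $A_M$ with $M\subseteq[m]$ and $|M|>|K_m|$ is automatically singular. The forward implication is immediate: if $A\in\PSD_1(n)$, then the same sets $K_m$ work. Any $M$ with $K_m\subset M\subseteq[m]$ and $|M|\in\{|K_m|+1,|K_m|+2\}$ satisfies $|M|>|K_m|$, so $A_M$ is singular by \cref{def:posdef 1}. All the work is in the converse.

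For the converse, fix $m\in[n]$ and write $K=K_m$, $k=|K|$. If $k=m$ there is nothing to prove, so assume $k<m$. After simultaneously permuting rows and columns of the symmetric matrix $A_{[m]}$ (which preserves principal submatrices and their determinants), write
\[
A_{[m]}\sim\begin{pmatrix} A_K & B\\ B^T & C\end{pmatrix},
\]
where $A_K$ is invertible because it is $\PD_1$. Let $S=C-B^TA_K^{-1}B$ be the Schur complement; it is a symmetric matrix indexed by $[m]\setminus K$. The key identity is that for any $T\subseteq[m]\setminus K$,
\[
\det A_{K\cup T}=\det(A_K)\,\det(S_T).
\]
This is the standard block-determinant formula applied to the corresponding sub-block, since the Schur complement of $A_K$ in $A_{K\cup T}$ is exactly $S_T$.

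Next, apply the hypothesis. Taking $T=\{i\}$, singularity of $A_{K\cup\{i\}}$ gives $S_{ii}=0$ for all $i$. Taking $T=\{i,j\}$ with $i\ne j$, singularity of $A_{K\cup\{i,j\}}$ gives $S_{ii}S_{jj}-S_{ij}^2=0$, hence $S_{ij}^2=0$ and so $S_{ij}=0$. This step holds in every characteristic, including $q$ even, since we are only using that a field has no nonzero nilpotents. Therefore $S=0$. Then
\[
\operatorname{rank}A_{[m]}=\operatorname{rank}A_K+\operatorname{rank}S=k.
\]
Consequently every principal submatrix $A_M$ with $M\subseteq[m]$ and $|M|>k$ has rank at most $k<|M|$, so it is singular. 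This holds for every $m$, so $A\in\PSD_1(n)$.

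The only point needing care is the Schur complement identity for the sub-blocks $A_{K\cup T}$, specifically that their Schur complement is the principal submatrix $S_T$ of $S$. This follows directly from the block structure. There is no real obstacle here: the argument is the same as for \cref{thm:psd princ submatrix}, with positivity playing no role beyond the invertibility of $A_K$.
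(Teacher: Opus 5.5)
Your proof is correct and follows the same route as the paper. The paper omits the proof as identical to that of \cref{thm:psd princ submatrix}: deduce $\operatorname{rank} A_{[m]}=|K_m|$ from the singularity of the principal submatrices $A_M$ with $K_m\subset M$ of orders $|K_m|+1$ and $|K_m|+2$, then conclude that every larger principal submatrix of $A_{[m]}$ is singular. The only difference is that the paper gets the rank step by citing Bôcher's theorem (\cref{lem:bocher thm}), whereas your Schur-complement computation ($S_{ii}=0$, then $S_{ij}^2=0$, so $S=0$) proves it directly over any field, including characteristic two.
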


\begin{remark}
The equivalence of \cref{thm:psd RC}(f) and \cref{cor:posdef type 1 defn real case 1} over the complex field doesn't extend to finite fields. Consider the following matrix in $M_3(\mathbb{F}_3)$:
\[
A=\begin{pmatrix}
    2&2&0\\
    2&1&2\\
    0&2&2
\end{pmatrix}
\]
This matrix satisfies the condition in \cref{thm:posdef RC}(b), because $A_{\{2,3\}}$ is a $\PD_1$ matrix and $\det(A) = 0$. However, it does not satisfy the condition in \cref{cor:posdef type 1 defn real case 1}, as $A_{\{1\}}$ and $A_{\{1,2\}}$ do not satisfy the condition in \cref{thm:posdef RC}(b). 
\end{remark}

From \cref{def:posdef 1}, it follows that every leading principal submatrix of a $\PSD_1$ matrix is also $\PSD_1$.

We will find a recurrence satisfied by $\psd_1(n,r)$, solving which gives an explicit formula for the number of matrices in $\PSD_1(n)$ of rank $r$.

\begin{lemma}
\label{lem:psd 1 extn}
Let $A \in \PSD_1(n)$ be of rank $r$, and let $K_1,K_2 \subseteq [n-1]$ be such that $|K_1| = |K_2| = r$ and $A_{K_1}$ and $A_{K_2}$ are $\PD_1$ matrices. Let
\[
B=
\left(
\begin{array}{c|c}
A & v\\
\hline
\\[-1em]
v^T &a_2
\end{array}
\right),
\]
where $v^T$ linearly depends on the rows of $A$, and $a_2 \in \mathbb{F}_q$. Then $B_{K_2\cup\{n\}}$ is $\PD_1$ if and only if $B_{K_1\cup\{n\}}$ is $\PD_1$. Moreover, there are $\mathbb{F}_q^{+}$ choices of $a_2$ for which $B_{K_1\cup\{n\}}$ is $\PD_1$.

\end{lemma}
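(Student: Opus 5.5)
The plan is to reduce the $\PD_1$ condition on $B_{K\cup\{n\}}$ to a single determinant condition. Then, via a Schur complement, that determinant becomes $\det(A_K)$ times a scalar that does not depend on $K$.

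\emph{Step 1: reduction to one minor.} Let $K\in\{K_1,K_2\}$. Indices are taken in increasing order and $n$ is larger than every element of $K\subseteq[n-1]$. So the leading principal submatrices of $B_{K\cup\{n\}}$ are the leading principal submatrices of $B_K=A_K$, together with $B_{K\cup\{n\}}$ itself. Since $A_K$ is $\PD_1$, all of the former have positive determinant. Hence $B_{K\cup\{n\}}$ is $\PD_1$ if and only if $\det(B_{K\cup\{n\}})\in\mathbb{F}_q^{+}$. Because $A_K$ is invertible, the Schur complement formula gives
\[
\det(B_{K\cup\{n\}})=\det(A_K)\,\bigl(a_2-v_K^TA_K^{-1}v_K\bigr).
\]
Since $\det(A_K)\in\mathbb{F}_q^{+}$ and $\mathbb{F}_q^{+}$ is a multiplicative subgroup, this determinant is positive exactly when $a_2-v_K^TA_K^{-1}v_K\in\mathbb{F}_q^{+}$.

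\emph{Step 2: independence of $K$.} This is the main point. Since $A$ is symmetric and $v^T$ is a combination of the rows of $A$, we can write $v=Aw$ for some $w\in\mathbb{F}_q^{n-1}$. I next prove the rank factorization $A=A_{[n-1],K}A_K^{-1}A_{K,[n-1]}$.
\begin{itemize}
\item Because $A_K$ is invertible, the $r$ rows $A_{K,[n-1]}$ are linearly independent.
\item Because $\operatorname{rank}A=r$, these rows span the row space of $A$, so $A=C\,A_{K,[n-1]}$ for some matrix $C$.
\item Restricting to the columns in $K$ gives $A_{[n-1],K}=C A_K$, hence $C=A_{[n-1],K}A_K^{-1}$.
\end{itemize}
Using $v_K=A_{K,[n-1]}w$ and the symmetry of $A$, we get
\[
w^TAw=(A_{K,[n-1]}w)^TA_K^{-1}(A_{K,[n-1]}w)=v_K^TA_K^{-1}v_K.
\]
Thus $s:=v_K^TA_K^{-1}v_K=w^TAw$ is the same for $K=K_1$ and $K=K_2$.

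\emph{Step 3: conclusion and count.} By Steps 1 and 2, for $K\in\{K_1,K_2\}$ the matrix $B_{K\cup\{n\}}$ is $\PD_1$ if and only if $a_2-s\in\mathbb{F}_q^{+}$. This criterion does not involve $K$, which proves the equivalence. For a fixed $v$ the value $s$ is fixed, and $a_2\mapsto a_2-s$ is a bijection of $\mathbb{F}_q$. So exactly $|\mathbb{F}_q^{+}|$ choices of $a_2$ make $B_{K_1\cup\{n\}}$ a $\PD_1$ matrix.

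The only step that needs care is the rank factorization in Step 2. It uses both that $|K|=r=\operatorname{rank}A$ and that $v$ lies in the row space of $A$; the rest is bookkeeping.
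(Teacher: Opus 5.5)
Your proof is correct and is essentially the paper's argument. The paper splits $a_2=a_2'+p$, where $a_2'$ is the unique value making the last row of the bordered matrix dependent on the first $n-1$ rows, and uses multilinearity to get $\det(B_{K_i\cup\{n\}})=p\det(A_{K_i})$; your scalar $s=w^TAw$ is exactly that $a_2'$, and your Schur complement together with the rank factorization $A=A_{[n-1],K}A_K^{-1}A_{K,[n-1]}$ makes explicit the vanishing term and the independence from $K$ that the paper leaves implicit.
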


\begin{proof}
Observe that we can uniquely write $a_2 = a_2' + p$
such that the last row of
\[
\left(
\begin{array}{c|c}
A & v\\
\hline
\\[-1em]
v^T &a_2'
\end{array}
\right)
\]
linearly depends on the first $n-1$ rows. By multilinearity of the determinant,
\[
\det(B_{K_i\cup\{n\}})=\det\left(
\begin{array}{c|c}
A_{K_i} & v_{K_i} \\ \hline
\\[-1em]
v_{K_i}^T & a_2'
\end{array}
\right)+
\det
\left(
\begin{array}{c|c}
A_{K_i} & v_{K_i} \\ \hline
\mathbf{0}_{1\times |K_1|} & p
\end{array}
\right)
=
p\,\det(A_{K_i}),
\]
for $i\in\{1,2\}$. Therefore, for each $i \in \{1,2\}$, the matrix $B_{K_i\cup\{n\}}$ is $\PD_1$ if and only if $p \in \mathbb{F}_q^{+}$. Consequently, $B_{K_1\cup\{n\}}$ is $\PD_1$ if and only if $B_{K_2\cup\{n\}}$ is $\PD_1$. Moreover, since $p$ ranges over $\mathbb{F}_q^{+}$, there are $|\mathbb{F}_q^{+}|$ choices of $a_2$ for which $B_{K_1\cup\{n\}}$ is $\PD_1$. This completes the proof.
\end{proof}

\begin{theorem}\label{thm:psd 1 recur}
The number $\psd_1(n,r)$ of rank $r$ matrices in $\PSD_1(n)$ satisfies the equations $\psd_1(n,0)=1$, $\psd_1(n,r)=0$ whenever $r>n$ or $r<0$, and the following recurrence relation:
\[
    \psd_1(n,r)=\begin{cases}
\displaystyle
q^r\ \psd_1(n-1,r)+q^{r-1}\ \left(\frac{q-1}{2}\right)\ \psd_1(n-1,r-1)
& \text{if $q$ is odd}, \\[1em]

\displaystyle
q^r\ \psd_1(n-1,r)+q^{r-1}\ ({q-1})\ \psd_1(n-1,r-1)
& \text{if $q$ is even}.
\end{cases}
\]
\end{theorem}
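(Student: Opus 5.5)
We build each matrix in $\PSD_1(n)$ of rank $r$ by bordering a matrix $A\in M_{n-1}(\mathbb{F}_q)$, and then count the borderings. Write any symmetric $B\in M_n(\mathbb{F}_q)$ as
\[
B=\left(\begin{array}{c|c} A & v\\ \hline v^T & a_2\end{array}\right).
\]
Since the definition of $\PSD_1$ is hereditary on leading principal submatrices, $B\in\PSD_1(n)$ forces $A\in\PSD_1(n-1)$, and we have already observed that the rank of a $\PSD_1$ matrix equals $|K_n|$. So, for a fixed $A\in\PSD_1(n-1)$ of rank $s$, the task is to count the pairs $(v,a_2)$ for which $B\in\PSD_1(n)$, sorted by the rank of $B$. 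For $m\le n-1$ the conditions on $B$ are exactly the conditions on $A$, so only the case $m=n$ needs to be checked.

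\emph{Rank-$(s+2)$ extensions are impossible.} Fix $K=K_{n-1}$ with $|K|=s$ and $A_K$ a $\PD_1$ matrix, so that $A_K$ is nonsingular. Every principal submatrix of $A$ of size greater than $s$ is singular, and $A$ has rank $s$. The row space of a symmetric rank-$s$ matrix is spanned by the rows indexed by a nonsingular principal $s\times s$ block (the Bocher-type lemma quoted before the theorem); here that block is $A_K$.

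Suppose $v$ does not lie in the column space of $A$. By \cref{lem:mac symm class}(3), $B$ has rank $s+2$. Then $K_n$ must have size $s+2$ and contain $n$. This forces $B_{K_n\setminus\{n\}}$, which is a principal submatrix of $A$ of size $s+1$, to be nonsingular, a contradiction. Hence $v$ lies in the column space of $A$, and $B$ has rank $s$ or $s+1$.

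\emph{Counting the admissible $v$ and $a_2$.} Write $v=Aw$. Since the column space of $A$ has size $q^s$, there are $q^s$ choices of $v$. As in the proof of \cref{lem:psd 1 extn}, decompose $a_2=a_2'+p$, where $a_2'=w^TAw$ is the unique value making the last row of $B$ dependent on the first $n-1$ rows.

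If $p=0$, then $B$ has rank $s$. Take $K_n=K_{n-1}$: every principal submatrix of size greater than $s$ is singular because $\operatorname{rank}B=s$. This gives $q^s$ extensions, each of rank $s$.

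If $p\neq0$, then $B$ has rank $s+1$, and \cref{lem:psd 1 extn} gives $\det B_{K\cup\{n\}}=p\det A_K$. So $B\in\PSD_1(n)$ if and only if some $(s+1)$-subset $K'\ni n$ with $K'\setminus\{n\}$ a $\PD_1$-block of $A$ makes $B_{K'}$ a $\PD_1$ matrix; all larger principal submatrices are automatically singular by rank.

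\emph{The main obstacle} is to show that any valid $K'$ must contain $n$, and that validity does not depend on which $\PD_1$-block of $A$ is chosen. For the first point, a subset $K'\subseteq[n-1]$ of size $s+1$ gives a singular submatrix of $A$, so $n\in K'$. For the second point, \cref{lem:psd 1 extn} shows that every admissible block gives the same condition, namely $p\in\mathbb{F}_q^{+}$, which requires $\det A_K\in\mathbb{F}_q^{+}$. That holds because $A_K$ is $\PD_1$, and then the leading minors of $B_{K\cup\{n\}}$ are those of $A_K$ together with $p\det A_K$. Thus there are $q^s|\mathbb{F}_q^{+}|$ rank-$(s+1)$ extensions.

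\emph{Conclusion.} Summing over $A$ gives
\[
\psd_1(n,r)=q^r\psd_1(n-1,r)+q^{r-1}|\mathbb{F}_q^{+}|\,\psd_1(n-1,r-1),
\]
and substituting $|\mathbb{F}_q^{+}|=q-1$ for $q$ even and $|\mathbb{F}_q^{+}|=\frac{q-1}{2}$ for $q$ odd yields the two cases of \cref{thm:psd 1 recur}. The boundary values are immediate: the zero matrix is the unique rank-$0$ matrix and is $\PSD_1$ with $K_m=\varnothing$, so $\psd_1(n,0)=1$, and no $n\times n$ matrix has rank greater than $n$ or less than $0$.
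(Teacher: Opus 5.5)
Your proof is correct and follows essentially the same route as the paper. You fix the leading $(n-1)\times(n-1)$ block, rule out rank-$(s+2)$ extensions, count $q^s$ rank-preserving extensions, and use \cref{lem:psd 1 extn} to get $q^s|\mathbb{F}_q^{+}|$ rank-increasing ones. Your explicit check that any valid $K'$ must contain $n$, and that $K'\setminus\{n\}$ is itself a $\PD_1$ block of $A$ (so that \cref{lem:psd 1 extn} applies), fills in a step the paper only gestures at.
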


\begin{proof}
Initial conditions for $\psd_1(n,r)$ follows trivially. Since all leading principal minors of a $\PSD_1$ matrix are again $\PSD_1$, we analyze in how many ways a matrix $A \in \PSD_1(n-1)$ can be extended to a rank $r$ matrix in $\PSD_1(n)$. For each $i\in[n-1]$, let $K_i\subseteq[i]$ be the subset associated with $A$ as in \cref{def:posdef 1}.

Let
\[
B=
\left(
\begin{array}{c|c}
A&v\\
\hline
\\[-1em]
v^T&a_2
\end{array}
\right),
\]
where $v\in \mathbb{F}_q^{n-1}$ and $a_2\in\mathbb{F}_q$.
We have three cases depending on the rank of $A$.

\medskip

\noindent\textbf{Case 1:} $\operatorname{rank}(A)=r$. By
\cref{lem:mac symm class}(1), there are $q^r$ symmetric matrices of rank $r$ in $M_n(\mathbb{F}_q)$ having $A$ as their leading principal submatrix.
For each such matrix $B$, define
$K'_i=K_i$ for every $i\in[n-1]$, and set
$K'_n=K_{n-1}$. Then the subsets
$K'_1,K'_2,\dots,K'_n$ satisfy
\cref{def:posdef 1} for $B$. Hence $B$ is a
$\PSD_1$ matrix. Therefore, the number of matrices in $\PSD_1(n)$ of rank $r$ whose leading principal submatrix of order $n-1$ also has rank $r$ is $q^r\psd_1(n-1,r)$.

\medskip

\noindent\textbf{Case 2:} $\operatorname{rank}(A)=r-1$. By \cref{lem:mac symm class}(2), the row vector $v^T$ must lie in the row space of $A$. Since $\operatorname{rank}(A)=r-1$, there are $q^{r-1}$ possible choices for $v$. Once $v$ is fixed, it remains to choose $a_2$ so that there exists a subset $K'_n\subseteq[n]$ of size $r$ for which the principal submatrix $B_{K'_n}$ is $PD_1$. By \cref{lem:psd 1 extn}, it suffices to count the choices of $a_2$ for which $B_{K_{n-1}\cup\{n\}}$ is $PD_1$. For each such choice, define $K'_i=K_i$ for $i\in[n-1]$ and $K'_n=K_{n-1}\cup\{n\}$. Then the subsets $K'_1,K'_2,\dots,K'_n$ satisfy the conditions in \cref{def:posdef 1}. Hence, $B\in\PSD_1(n)$, and $\operatorname{rank}(B)=r$.

By \cref{lem:psd 1 extn}, there are $|\mathbb{F}_q^{+}|$ choices for $a_2$. 
Consequently, there are $q^{r-1}|\mathbb{F}_q^{+}|$ matrices in $\PSD_1(n)$ of rank $r$ whose leading principal submatrix is $A$.
 Therefore, the number of $\PSD_1$ matrices of rank $r$ whose leading principal submatrix of order $n-1$ has rank $r-1$ is $q^{r-1}|\mathbb{F}_q^{+}|\,\psd_1(n-1,r-1)$.

\medskip

\noindent\textbf{Case 3:} $\operatorname{rank}(A)=r-2$. We claim that $A$ cannot be extended to a matrix in $\PSD_1(n)$ of rank $r$.
Suppose, for the sake of contradiction, that there exists $B\in \PSD_1(n)$ of rank $r$ whose leading principal submatrix of order $n-1$ is $A$. Since $B$ has rank $r$, there exists a subset $K \subseteq [n]$ of size $r$ such that the submatrix $B_K$ is $\PD_1$. However, the submatrix $B_{K \setminus \{n\}}$, which has size $r-1$, must then be nonsingular, contradicting the assumption that the leading principal submatrix of order $n-1$ has rank $r-2$. Therefore, this case contributes nothing.
\end{proof}

Solving the recurrence in \cref{thm:psd 1 recur}, we obtain the following formula.

\begin{corollary}
The number of matrices in $\PSD_1(n)$ of rank $r$ is given by 
\[
\psd_1(n,r)=\begin{cases}
\displaystyle
\qbinom{n}{r} q^{\frac{r(r-1)}{2}}\left(\frac{q-1}{2}\right)^r
& \text{if $q$ is odd}, \\[1em]

\displaystyle
\qbinom{n}{r}q^{\frac{r(r-1)}{2}}({q-1})^r
& \text{if $q$ is even}.
\end{cases}
\]
\end{corollary}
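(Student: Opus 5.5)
The plan is to solve the recurrence of \cref{thm:psd 1 recur} by checking that the proposed closed form satisfies it together with the same initial and boundary conditions. Uniqueness then follows by induction on $n$. Write $s=|\mathbb{F}_q^{+}|$, so $s=(q-1)/2$ for $q$ odd and $s=q-1$ for $q$ even. The two cases can then be treated at once by the single candidate
\[
F(n,r)=\qbinom{n}{r}q^{\frac{r(r-1)}{2}}s^r ,
\]
and the recurrence takes the uniform form $\psd_1(n,r)=q^r\psd_1(n-1,r)+q^{r-1}s\,\psd_1(n-1,r-1)$.

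First I check the boundary data. We have $F(n,0)=\qbinom{n}{0}=1$, and $F(n,r)=0$ when $r>n$ or $r<0$, using the standard convention that the Gaussian binomial vanishes outside $0\le r\le n$. These match the initial conditions in \cref{thm:psd 1 recur}. Since the recurrence determines $\psd_1(n,\cdot)$ from $\psd_1(n-1,\cdot)$, an induction on $n$ gives $\psd_1=F$ once $F$ is shown to satisfy the recurrence for $n\ge1$ and $0<r\le n$.

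For the inductive step I substitute $F$ into the right-hand side:
\[
q^r\qbinom{n-1}{r}q^{\frac{r(r-1)}{2}}s^r+q^{r-1}s\qbinom{n-1}{r-1}q^{\frac{(r-1)(r-2)}{2}}s^{r-1}.
\]
Since $\frac{(r-1)(r-2)}{2}+(r-1)=\frac{r(r-1)}{2}$, the factor $q^{\frac{r(r-1)}{2}}s^r$ comes out of both terms, leaving
\[
q^{\frac{r(r-1)}{2}}s^r\left(q^r\qbinom{n-1}{r}+\qbinom{n-1}{r-1}\right).
\]
The bracket is exactly the $q$-Pascal identity $\qbinom{n}{r}=\qbinom{n-1}{r-1}+q^r\qbinom{n-1}{r}$, which can be verified directly from the product formula $\qbinom{n}{r}=\prod_{i=0}^{r-1}\frac{q^{n-i}-1}{q^{r-i}-1}$. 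The expression therefore equals $F(n,r)$.

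The only point needing care is to use the version of $q$-Pascal with the factor $q^r$ on $\qbinom{n-1}{r}$ rather than the other version, which has $q^{n-r}$ on $\qbinom{n-1}{r-1}$. The exponent bookkeeping above is what forces this choice. The edge case $r=n$ also needs a check, since there $\qbinom{n-1}{n}=0$ and the identity reduces to $\qbinom{n}{n}=\qbinom{n-1}{n-1}=1$. There is no genuine obstacle.
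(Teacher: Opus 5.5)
Your proof is correct and follows the paper's route. The paper simply states that the formula comes from solving the recurrence of \cref{thm:psd 1 recur}, and your check via the $q$-Pascal identity $\qbinom{n}{r}=\qbinom{n-1}{r-1}+q^r\qbinom{n-1}{r}$, together with the matching boundary conditions, supplies exactly the computation the paper omits.
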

    
\subsection{Type 2}
\label{sec:psd 2}

\begin{definition}
A symmetric matrix $A\in M_n(\mathbb{F})$ is said to be a \textit{positive semidefinite matrix of type 2} (denoted by $\PSD_2$) if $A=LL^T$ for some lower triangular matrix $L$ whose diagonal entries are nonnegative. We denote the set of $\PSD_2$ matrices in $M_n(\mathbb{F}_q)$ by $\PSD_2(n)$, and let $\psd_2(n)=|\PSD_2(n)|$.
\end{definition} 

At present, we do not have a simple closed-form expression for the number of matrices in $\PSD_2(n)$. Instead, we derive an upper bound for this quantity. We will need the following classical identity. 

\begin{proposition}[$q$-binomial theorem {\cite[Section 1.8]{Stanley2011}}]
\label{prop:q bin thm}
We have
\[
\sum_{k=0}^{n+1} x^k q^{\binom{k}{2}} \qbinom{n+1}{k} = 
\prod_{i=0}^n(1+x q^i).
\]
\end{proposition}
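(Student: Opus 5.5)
We prove the $q$-binomial theorem by induction on $n$, using the $q$-Pascal recurrence
\[
\qbinom{n+1}{k}=\qbinom{n}{k}+q^{\,n+1-k}\qbinom{n}{k-1},
\]
which follows from the definition $\qbinom{n}{k}=\prod_{i=0}^{k-1}\frac{q^{n-i}-1}{q^{i+1}-1}$ by putting the two right-hand terms over a common denominator. Alternatively, it follows from the combinatorial description of $\qbinom{n}{k}$ as the number of $k$-dimensional subspaces of $\mathbb{F}_q^{n}$, by splitting according to whether a subspace lies in a fixed hyperplane.

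\textbf{Base case.} Set $P_n(x)=\prod_{i=0}^{n}(1+xq^i)$. For $n=0$ the left side is $1+x$, which equals $P_0(x)$.

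\textbf{Inductive step.} Assume the identity for $n-1$, namely $P_{n-1}(x)=\sum_{k=0}^{n}x^kq^{\binom k2}\qbinom nk$. Then
\[
P_n(x)=P_{n-1}(x)+xq^n P_{n-1}(x).
\]
Expand both products using the induction hypothesis. The coefficient of $x^k$ on the right is
\[
q^{\binom k2}\qbinom nk+q^{n}q^{\binom{k-1}{2}}\qbinom n{k-1}.
\]
Since $\binom{k-1}{2}=\binom k2-(k-1)$, the second term equals $q^{\binom k2}q^{\,n+1-k}\qbinom n{k-1}$. By the $q$-Pascal recurrence, the whole coefficient is therefore $q^{\binom k2}\qbinom{n+1}{k}$, which is the claimed coefficient of $x^k$ in $P_n(x)$. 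This completes the induction.

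The only delicate point is matching the power of $q$. The factor $q^n$ in $xq^nP_{n-1}$ combines with the shift $\binom{k}{2}-\binom{k-1}{2}=k-1$ to give exactly $q^{n+1-k}$, and this is why we use the version of $q$-Pascal with the weight $q^{n+1-k}$ on the $\qbinom{n}{k-1}$ term rather than the other version. Once that choice is made, the remaining steps are routine.
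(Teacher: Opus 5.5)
Your proof is correct. The bookkeeping $q^{n}q^{\binom{k-1}{2}}=q^{\binom k2}q^{n+1-k}$ is right. The version $\qbinom{n+1}{k}=\qbinom{n}{k}+q^{n+1-k}\qbinom{n}{k-1}$ of $q$-Pascal is also correct: over the common denominator the numerators combine as $(q^{n+1-k}-1)+q^{n+1-k}(q^{k}-1)=q^{n+1}-1$. The only point left implicit is the endpoints $k=0$ and $k=n+1$. There you need the conventions $\qbinom{n}{-1}=0$ and $\qbinom{n}{n+1}=0$ (the latter is automatic from the product formula) for the recurrence and the coefficient comparison to go through.

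The paper gives no proof of this statement; it only cites Stanley, so there is no argument of the authors' to compare against. The usual alternative is bijective:
\begin{itemize}
\item The coefficient of $x^k$ in $\prod_{i=0}^{n}(1+xq^i)$ is $\sum_{0\le a_1<\cdots<a_k\le n}q^{a_1+\cdots+a_k}$.
\item The shift $a_j\mapsto a_j-(j-1)$ turns this into $q^{\binom k2}$ times the generating function of weakly increasing length-$k$ sequences in $[0,n+1-k]$.
\item Equivalently, these are partitions fitting in a $k\times(n+1-k)$ box, whose generating function is $\qbinom{n+1}{k}$.
\end{itemize}
That route yields directly the subset-sum form \eqref{eq:q bin coeff}, which is how the paper actually uses the theorem. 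However, it takes the box-partition interpretation of $\qbinom{n+1}{k}$ as an input, and that interpretation is itself usually proved by a $q$-Pascal recursion. Your induction is purely algebraic and self-contained given only that recursion.
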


\begin{theorem}
    The number of matrices in $\PSD_2(n)$ satisfies the following bound
\[
    \psd_2(n)\leq \begin{cases}
    \displaystyle
    2\prod_{i=1}^n\left(1+\frac{q^i}{2}\right)
    & \text{if $q\equiv 3\pmod{4}$}, \\[1.5em]

    \displaystyle
    \prod_{i=1}^n\left(1+\frac{q^i}{4}\right)
    & \text{if $q\equiv 1\pmod{4}$}, \\[1.5em]

    \displaystyle
    q^{\frac{n(n-1)}{2}}(q+1)^{n}
    & \text{if $q$ is even}. \\
    \end{cases}
\]
\end{theorem}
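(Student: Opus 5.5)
The plan is to bound $\psd_2(n)$ by the number of factors $L$ counted modulo the redundancy from \cref{lem:posdef_2 lem indef field}, organized by which diagonal entries of $L$ vanish. I then sum with the $q$-binomial theorem, \cref{prop:q bin thm}.

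\textbf{The even case.} This is the crude case. Every element of $\mathbb{F}_q$ is either $0$ or positive, so the diagonal entries of $L$ range over all of $\mathbb{F}_q$. The map $L\mapsto LL^T$ is therefore defined on a set of size $q^{n(n-1)/2}q^n$. This is already at most $q^{n(n-1)/2}(q+1)^n$, which gives the claimed bound.

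\textbf{Normal form for $q$ odd.} Write $A=LL^T=\sum_j \ell_j\ell_j^T$, where $\ell_j$ is the $j$th column of $L$, supported on rows $\geq j$. Let $S\subseteq[n]$ be the set of indices $j$ with $\ell_{jj}\neq 0$, and put $k=|S|$. I would first reduce to a normal form in which every column $\ell_j$ with $j\notin S$ is zero. This should follow by induction on $n$ via the block decomposition
\[
A=\left(\begin{array}{c|c}\ell_{11}^2 & \ell_{11}u^T\\ \hline \ell_{11}u & uu^T+L'L'^T\end{array}\right),
\]
where $\ell_1=(\ell_{11},u)$. If $\ell_{11}=0$, the first row of $A$ vanishes and $A=0\oplus(uu^T+L'L'^T)$. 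The task is then to re-express $uu^T+L'L'^T$ as $\widetilde L\widetilde L^T$ with $\widetilde L$ lower triangular of order $n-1$ with nonnegative diagonal, so that the induction can proceed.

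\textbf{Counting in normal form.} Granting the normal form, each $A$ is determined by three pieces of data:
\begin{itemize}
\item the set $S$;
\item for each $j\in S$, the diagonal entry $\ell_{jj}$;
\item for each $j\in S$, the $n-j$ subdiagonal entries of $\ell_j$, which are free.
\end{itemize}
For the diagonal entries, \cref{lem:posdef_2 lem indef field} (applied to the columns indexed by $S$) lets me count modulo signs. In the indefinite case, each pair $\pm\ell_{jj}$ contains exactly one positive element in at most half the cases, giving at most $(q-1)/4\le q/4$ effective choices. In the definite case the sign is fixed by positivity, giving $(q-1)/2\le q/2$ choices.

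Summing $q^{\sum_{j\in S}(n-j)}$ over all $k$-subsets $S$ gives $q^{\binom{k}{2}}\qbinom{n}{k}$. Hence
\[
\psd_2(n)\le \sum_{k=0}^n x^k q^{\binom k2}\qbinom{n}{k},
\]
with $x=q/4$ for $q\equiv1\pmod4$ and $x=q/2$ for $q\equiv 3\pmod4$. By \cref{prop:q bin thm}, this sum equals $\prod_{i=0}^{n-1}(1+xq^i)=\prod_{i=1}^{n}(1+q^i x/q)$, which yields the stated products. The extra factor $2$ in the case $q\equiv3\pmod4$ gives slack to absorb a possible boundary overcount, for example the contribution at $k=0$ or the parity of sign choices.

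\textbf{Main obstacle.} The hard step is the normal form: showing that a zero diagonal entry can be assumed to come with a zero column. If it fails as stated, I would weaken it. Instead of killing $\ell_j$, I would show that $A$ is still determined by data of the same size, namely $S$, the diagonal entries up to sign, and $n-j$ free entries for each $j\in S$. For this I would argue that the contribution $uu^T$ with $u$ supported below a zero pivot can be absorbed into the Cholesky data of the later columns indexed by $S$. The constant factor $2$ in the case $q\equiv 3\pmod 4$ is available to cover any resulting loss.
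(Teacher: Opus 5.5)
\textbf{The normal form is false, not merely unproven.} Over $\mathbb{F}_3$ take $L=\begin{pmatrix}0&0\\1&1\end{pmatrix}$, so that $LL^T=\operatorname{diag}(0,2)\in\PSD_2(2)$.
\begin{itemize}
\item Every factor $\widetilde L$ of this matrix has $\widetilde\ell_{11}=0$ and $\widetilde\ell_{21}^2+\widetilde\ell_{22}^2=2$.
\item Since $2=-1$ is not a square in $\mathbb{F}_3$, the zero-pivot column is forced to be nonzero.
\end{itemize}
This is the general phenomenon in your block decomposition. When $\ell_{11}=0$, the lower block $uu^T+L'L'^T$ is a sum of $n$ rank-one terms, and it need not lie in $\PSD_2(n-1)$.

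\textbf{Your fallback fails as well.} It asserts that $A$ is determined by $S$, the pivots up to sign, and the subdiagonal entries of the columns in $S$. But $L=\begin{pmatrix}0&0\\b&0\end{pmatrix}$ has $S=\varnothing$, while $LL^T=\operatorname{diag}(0,b^2)$ depends on $b$.
\begin{itemize}
\item For $q\equiv1\pmod 4$ this already gives $(q+1)/2$ matrices with zero first row.
\item Your data with $S\subseteq\{2\}$ allow at most $1+q/4$ of them.
\item With $x=q/4$, your sum reproduces $\prod_{i=1}^n(1+q^i/4)$ exactly. So the surplus could only be paid for by the slack $(q-1)/4<q/4$ in other terms, which is a global comparison you never make.
\item For $q\equiv3\pmod4$, appealing to the factor $2$ is likewise an assertion, not an argument.
\end{itemize}

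\textbf{Comparison with the paper.} The paper does not normalize at all. It counts all factors $L$, sorted by which lines are zero, nonzero with zero pivot (weight $q^{m-1}-1$), or have a positive pivot. It then divides by sign flips; the flips that actually preserve $LL^T$ are column flips. This route does not settle $q\equiv1\pmod4$ either.
\begin{itemize}
\item Its displayed estimates drop the multiplicities $\binom{k_1+k_2}{k_1}$ and $k+1$.
\item More decisively, the orbit count itself is $\sum 2^{-(k_1+k_2)}|\mathcal{L}_{k_1,k_2,k_3}|=\prod_{i=0}^{n-1}\frac{2+q^i+q^{i+1}}{4}$. At $n=2$, $q=13$ this equals $184$, which already exceeds $\prod_{i=1}^{n}(1+q^i/4)=2941/16$.
\end{itemize}

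\textbf{A repair using your own first-column split, without normalizing.} Let $c=(q-1)/4$ for $q\equiv1\pmod 4$ and $c=(q-1)/2$ for $q\equiv3\pmod 4$.
\begin{itemize}
\item If $A_{11}\neq0$, then $A$ determines and is determined by the triple consisting of $A_{11}=\ell_{11}^2$, the column $\ell_{11}u$, and the Schur complement $A_{[2,n]}-uu^T=L'L'^T$. The triple ranges over (squares of positive elements)$\times\mathbb{F}_q^{n-1}\times\PSD_2(n-1)$. So there are exactly $c\,q^{n-1}\psd_2(n-1)$ such matrices.
\item If $A_{11}=0$, then $A=0\oplus A'$ with $A'$ symmetric, giving at most $q^{\binom n2}$ matrices.
\end{itemize}
Unrolling gives $\psd_2(n)\le q^{\binom n2}\sum_{j=0}^n c^j$. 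Dividing both sides by $\pd_2(n)=c^nq^{\binom n2}$, one checks this is at most the stated bounds. The only case needing care is $q=5$, where $c=1$ and the inequality reads $n+1\le\prod_{i=1}^n\frac{5^i+4}{4\cdot 5^{i-1}}$.
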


\begin{proof}
First, consider the case when $q$ is odd. 
Replacing $x$ by $xq$ and $n$ by $n-1$ in \cref{prop:q bin thm}, we obtain
\begin{equation}\label{eq:transf binom thm}
\sum_{k=0}^{n} x^k q^{\binom{k+1}{2}} \qbinom{n}{k}
= \prod_{i=1}^n(1+xq^i).
\end{equation}
Comparing the coefficient of $x^k$, we obtain
\begin{equation}\label{eq:q bin coeff}
    \sum_{1\leq i_1<\cdots<i_{k}\leq n}
    q^{i_1+\cdots+i_{k}}=q^{\binom{k+1}{2}} \qbinom{n}{k}.
\end{equation}
Given a composition $k_1+k_2+k_3=n$ of $n$, let $\mathcal{L}_{k_1,k_2,k_3}$ denote the set of lower triangular matrices in $M_n(\mathbb{F}_q)$ such that $k_1$ rows are nonzero rows with zero diagonal entries, $k_2$ rows have positive diagonal entries, and $k_3$ rows are zero rows. Then
\[
    |\mathcal{L}_{k_1,k_2,k_3}|=\sum_{\substack{
    I,J\subseteq [n]\\|I|=k_1,|J|=k_2,I\cap J=\varnothing
    }}
    \left(\prod_{m\in I}(q^{m-1}-1)\right)\left(\prod_{m\in J}q^{m-1}|\mathbb{F}_q^{+}|\right).
\]
    where $I$ denote the set of indices of the nonzero rows with zero diagonal entries, and $J$ denote the set of indices of the rows with positive diagonal entries. Using the inequality,  $q^{m-1}-1,\, q^{m-1}|\mathbb{F}_q^{+}|\leq q^{m}/2$ for all $m\in [n]$ and \eqref{eq:q bin coeff}, we get  
\[
    |\mathcal{L}_{k_1,k_2,k_3}| \leq
    \frac{1}{2^{k_1+k_2}}\sum_{1\leq \ell_1<\cdots<\ell_{k_1+k_2}\leq n}
    q^{\ell_1+\cdots+\ell_{k_1+k_2}}=\frac{q^{\binom{k_1+k_2+1}{2}}}{2^{k_1+k_2}}\qbinom{n}{k_1+k_2}.
\]
    
    Consider the case when $q\equiv 3\pmod{4}$. Observe that any two matrices in $\mathcal{L}_{k_1,k_2,k_3}$ that differ by a factor of $-1$ in one or more of their nonzero rows with zero diagonal entries give rise to the same $\PSD_2$ matrix. Therefore, 
\[
\begin{aligned}
        \psd_2(n) &\leq \sum_{k_1+k_2+k_3= n}\frac{1}{2^{k_1}}\left|\mathcal{L}_{k_1,k_2,k_3}\right|\\
        &\leq \sum_{k_1+k_2+k_3=n}\frac{q^{\binom{k_1+k_2+1}{2}}}{2^{2k_1+k_2}}\qbinom{n}{k_1+k_2}
        =\sum_{k=0}^n\frac{q^{\binom{k+1}{2}}}{2^{k}}\qbinom{n}{k}\left(\sum_{k_1=0}^{k}\frac{1}{2^{k_1}}\right)\\
        &\leq 2\sum_{k=0}^n\frac{q^{\binom{k+1}{2}}}{2^{k}}\qbinom{n}{k}.
    \end{aligned}
\]
    Taking $x=1/2$ in \eqref{eq:transf binom thm}, we obtain
\[
    \psd_2(n)\leq 2\prod_{i=1}^n\left(1+\frac{q^i}{2}\right).
\]
    This proves the bound for $q\equiv 3\pmod{4}$.

    Now, consider the case when $q\equiv 1\pmod{4}$. As in the previous case, any two matrices in $\mathcal{L}_{k_1,k_2,k_3}$ that differ by sign changes in the nonzero rows with zero diagonal entries give rise to the same $\PSD_2$ matrix. Moreover, if two matrices differ by sign changes in rows with nonzero diagonal entries, they also give rise to the same $\PSD_2$ matrix. Therefore, 
\[
    \begin{aligned}
    \psd_2(n)&\leq\sum_{k_1+k_2+k_3= n}\frac{1}{2^{k_1+k_2}}\left|\mathcal{L}_{k_1,k_2,k_3}\right|\\
    &\leq \sum_{k_1+k_2+k_3= n}\frac{q^{\binom{k_1+k_2+1}{2}}}{2^{2(k_1+k_2)}}\qbinom{n}{k_1+k_2}
    =\sum_{k=0}^n\frac{q^{\binom{k+1}{2}}}{2^{2k}}\qbinom{n}{k}.
    \end{aligned}
\]
    Taking $x=1/4$ in \eqref{eq:transf binom thm}, we obtain
\[
    \psd_2(n)\leq \prod_{i=1}^n\left(1+\frac{q^i}{4}\right).
\]
    This proves the bound for $q\equiv 1\pmod{4}$.
    
    Lastly, suppose that $q$ is even. In this case, the stated bound is simply the number of lower triangular matrices with nonnegative diagonal entries.
\end{proof}

We observed in \cref{sec:posdef 2} that $\PD_2(n)\subseteq\PD_1(n)$ for all $n\geq1$, with equality if and only if the field is definite. In the next theorem, we prove the reverse inclusion in the positive semidefinite case when the field is definite.

\begin{theorem}\label{thm:psd 1 psd 2 rel}
Over the definite field $\mathbb{F}_q$, $\PSD_1(n)\subseteq\PSD_2(n)$ for all $n\geq 1$.
\end{theorem}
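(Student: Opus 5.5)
Induction on $n$, building the Cholesky-type factor $L$ row by row along the leading principal submatrices, mirroring the recurrence in \cref{thm:psd 1 recur}. The key strengthening is the statement to be carried through the induction: if $A\in\PSD_1(n)$ has rank $r$ with sets $K_1\subseteq\cdots$ as in \cref{def:posdef 1}, then $A=LL^T$ with $L$ lower triangular, diagonal entries in $\mathbb{F}_q^+\cup\{0\}$, column $j$ of $L$ zero whenever $j\notin K_n$, and $\ell_{jj}\neq 0$ for $j\in K_n$. Reading off the construction in \cref{thm:psd 1 recur}, we may take $K_n$ to be the set of indices $i$ at which the rank of $A_{[i]}$ jumps (Case 2 steps), and $A_{K_n}$ is then the $\PD_1$ matrix. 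The base case $n=1$ is trivial: $A=(a)$ with $a=0$ or $a\in\mathbb{F}_q^+$, and in a definite field $a=b^2$ with $b$ positive (for $q$ even every nonzero element is positive).

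For the inductive step, write $B=\begin{pmatrix}A&v\\ v^T&a_2\end{pmatrix}$ with $A=L_AL_A^T$ as above. By \cref{thm:psd 1 recur} only Cases 1 and 2 occur, and in both $v$ lies in the column space of $A$ by \cref{lem:mac symm class}. Since $L_A$ has full column rank on the columns indexed by $K:=K_{n-1}$ and zero columns elsewhere, the column space of $A$ equals that of $L_A$. Hence we can solve $L_A w=v$ with $w$ supported on $K$, and $w$ is uniquely determined there because $(L_A)_{[n-1],K}$ has full column rank. Set the new last row of $L$ to be $(w^T,\ell)$, where $\ell^2=a_2-w^Tw$. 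Then $LL^T=B$ provided such an $\ell\ge 0$ exists, i.e.\ $p:=a_2-w^Tw\in\{0\}\cup\mathbb{F}_q^+$ and $\ell$ is chosen to be a nonnegative square root.

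It remains to identify $p$ with the quantity appearing in \cref{lem:psd 1 extn}. I would show that $p$ is the Schur complement: $\det(B_{K\cup\{n\}})=\det(A_K)\,(a_2-v_K^TA_K^{-1}v_K)$. Using $A_K=L_{K}L_K^T$ (restricting $L_A$ to rows and columns $K$, which is legitimate because the columns outside $K$ vanish), together with $v_K=L_Kw_K$, this gives $v_K^TA_K^{-1}v_K=w^Tw$. In Case 1 the last row of $B$ depends on the earlier rows, so $\det B_{K\cup\{n\}}=0$ and $p=0$; then $\ell=0$ and $K_n$ is unchanged. In Case 2 the set $K\cup\{n\}$ gives a $\PD_1$ submatrix, so $p\det(A_K)\in\mathbb{F}_q^+$ with $\det A_K\in\mathbb{F}_q^+$, hence $p\in\mathbb{F}_q^+$. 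Definiteness of the field then supplies a positive $\ell$ with $\ell^2=p$. This step is the only place definiteness is used, and it is exactly the obstruction exhibited by $A_1$ in the $\PD$ case.

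The main obstacle is the bookkeeping that the $K$-set chosen in \cref{def:posdef 1} can be assumed to be the jump set, so that the support of the columns of $L$ matches it. If $B$ is $\PSD_1$ with some other witnessing set $K'$ in Case 2, \cref{lem:psd 1 extn} says $B_{K'\cup\{n\}}$ is $\PD_1$ if and only if $B_{K\cup\{n\}}$ is. So I would invoke that lemma to replace an arbitrary witness by the canonical jump set, then run the argument above.
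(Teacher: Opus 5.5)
Your proof is correct and takes essentially the same route as the paper. Both arguments extend the factor of $A_{[n-1]}$ by a last row $(w^T,\ell)$ with $L_Aw=v$, where $\ell$ is a nonnegative square root of $p=a_2-w^Tw$; the quantity $p$ is $0$ or positive by the Case 1/Case 2 analysis of \cref{thm:psd 1 recur} and \cref{lem:psd 1 extn}, and definiteness supplies $\ell$. Your column-support strengthening and the Schur-complement identification are not needed: writing $v=Au$ and taking $w=L_A^Tu$ gives $w^Tw=u^TAu$, which is exactly the value $a_2'$ of \cref{lem:psd 1 extn}, so $p=a_2-a_2'$ directly. Your witness-transfer remark, showing that $K_{n-1}\cup\{n\}$ may be used as the witness, does make explicit a step the paper leaves implicit.
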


\begin{proof}
We will use induction on the order $n$ of the matrices to prove that if $A\in \PSD_1(n)$, then $A\in \PSD_2(n)$. For $n=1$, the statement is trivially true. Suppose the statement is true when the order of the matrices is $n-1$.

Let $A\in \PSD_1(n)$. Since $A_{[n-1]}$ is also a $\PSD_1$ matrix, we can write $A_{[n-1]}=LL^T$ where $L\in M_{n-1}(\mathbb{F}_q)$ is a lower triangular matrix with nonnegative diagonal entry. Write 
\[
B=
\begin{pmatrix}
\begin{array}{c|c}
LL^T&v\\
\hline
\\[-1em]
v^T&a
\end{array}
\end{pmatrix},
\]
where $v\in \mathbb{F}_q^{n-1}$ and $a\in \mathbb{F}_q$. By the proof of \cref{thm:psd 1 recur}, $v^T$ linearly depends on the rows of $LL^T$. Therefore, $v^T$ can be written as $v^T=u^TLL^T$ for some $u\in \mathbb{F}_q^{n-1}$. Moreover, by the proof of \cref{thm:psd 1 recur}, $a=uLL^Tu^T+b$ for some nonnegative $b$. Since $\mathbb{F}_q$ is definite, there exists nonnegative $c$ such that $c^2=b$. This give us the decomposition 
\[
A=\begin{pmatrix}
\begin{array}{c|c}
L & \mathbf{0}_{(n-1)\times 1} \\ \hline
uL & c
\end{array}
\end{pmatrix}
\begin{pmatrix}
\begin{array}{c|c}
L^T & L^Tu^t \\ \hline
\mathbf{0}_{1\times (n-1)} & c
\end{array}
\end{pmatrix}
\]
Therefore $A$ is a $\PSD_2$ matrix, proving the theorem.
\end{proof}

\subsection{Type 3}
\label{sec:psd 3}

\begin{definition}
A matrix $A\in M_n(\mathbb{F}_q)$ is said to be \textit{positive semidefinite matrix of type 3}, denoted $\PSD_3$, if there exists $M\in M_n(\mathbb{F}_q)$ such that $A=MM^T$. We denote the set of $\PSD_3$ matrices in $M_n(\mathbb{F}_q)$ by $\PSD_3(n)$, and let $\psd_3(n,r)$ denote the number of rank $r$ matrices in $\PSD_3(n)$.
\end{definition} 

\begin{remark}
As in \cref{rem:posdef_2 and posdef_3}, we have $\PSD_2(n)\subseteq\PSD_3(n)$ for all $n\geq 1$.
\end{remark}

We will make use of the following classification of symmetric matrices over finite fields of odd characteristic.

\begin{proposition}[\cite{Albert1938}]\label{prop:char_of_symm_q_odd}
    Let $\mathbb{F}_q$ be a finite field of odd characteristic. Then every symmetric matrix in $A\in M_n(\mathbb{F}_q)$ of rank $r$ is congruent to either $\operatorname{diag}(\underbrace{1, \dots, 1}_{r },\underbrace{0, \dots, 0}_{n-r })$ or $\operatorname{diag}(\underbrace{1, \dots, 1}_{r-1},a,\underbrace{0, \dots, 0}_{n-r})$
    where $a\in \mathbb{F}_q^-$.
\end{proposition}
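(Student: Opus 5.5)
The plan is the standard diagonalization argument for symmetric bilinear forms in odd characteristic, followed by a normalization of the diagonal using squares.

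\textbf{Step 1 (diagonalization).} We show by induction on $n$ that every symmetric $A\in M_n(\mathbb{F}_q)$ is congruent to a diagonal matrix, i.e. $PAP^T$ is diagonal for some $P\in\mathrm{GL}_n(\mathbb{F}_q)$.
\begin{itemize}
\item If $A=0$ there is nothing to do.
\item Otherwise we claim there is a vector $x$ with $x^TAx\neq 0$. If some diagonal entry $a_{ii}$ is nonzero, take $x=e_i$. If all diagonal entries vanish, some $a_{ij}\neq 0$ with $i\neq j$, and then $(e_i+e_j)^TA(e_i+e_j)=2a_{ij}\neq 0$. This is exactly where odd characteristic is used.
\item Extend $x$ to a basis whose remaining vectors lie in $\{y: x^TAy=0\}$. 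This is a hyperplane complementary to $x$ because $x^TAx\neq0$.
\item In this basis $A$ becomes $\operatorname{diag}(d_1)\oplus A'$ with $A'$ symmetric of order $n-1$. Apply the induction hypothesis to $A'$.
\end{itemize}
Since congruence preserves rank, exactly $r$ of the resulting diagonal entries are nonzero. A permutation congruence then puts $A$ in the form $\operatorname{diag}(d_1,\dots,d_r,0,\dots,0)$ with all $d_i\in\mathbb{F}_q^{\times}$.

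\textbf{Step 2 (normalizing the nonzero entries).} We reduce to the case $r=2$. It suffices to show that $\operatorname{diag}(a,b)$ is congruent to $\operatorname{diag}(1,ab)$ for any $a,b\in\mathbb{F}_q^\times$.
\begin{itemize}
\item By \cref{prop:php fin fld}(1), applied to $a^{-1}$, there exist $s,t$ with $as^2+bt^2=1$. Indeed, choose $u,w$ with $u^2+w^2=a^{-1}$, set $s=au$ and $t=w\sqrt{a/b}$ when $a/b$ is a square. In general, use a pigeonhole argument on the sets $\{as^2\}$ and $\{1-bt^2\}$, each of size $(q+1)/2$, which must intersect.
\item Then $v=(s,t)$ satisfies $v\,\operatorname{diag}(a,b)\,v^T=1$. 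Completing $v$ to a basis as in Step 1 yields $\operatorname{diag}(1,c)$ for some $c$.
\item Comparing determinants gives $c=ab\cdot(\det)^2$, so we may rescale $c$ to $ab$ by a square.
\end{itemize}
Applying this repeatedly to consecutive pairs pushes all the multiplicative content into the $r$-th entry, giving $\operatorname{diag}(1,\dots,1,d,0,\dots,0)$ with $d\neq0$.

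\textbf{Step 3 (the last entry).} If $d$ is a square $e^2$, scale the $r$-th basis vector by $e^{-1}$ to get $d=1$. Otherwise $d\in\mathbb{F}_q^-$. Any two negative elements differ by a square factor, so we may replace $d$ by any chosen $a\in\mathbb{F}_q^-$. This gives the two stated normal forms.

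\textbf{Main obstacle.} The only nontrivial point is the representation $as^2+bt^2=1$ in Step 2. The pigeonhole argument of \cref{prop:php fin fld} handles it directly, since $\{as^2: s\in\mathbb{F}_q\}$ and $\{1-bt^2: t\in\mathbb{F}_q\}$ each have $(q+1)/2$ elements and therefore must meet in $\mathbb{F}_q$.
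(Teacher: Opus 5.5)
The paper gives no proof of this proposition; it is quoted from Albert, so there is no internal argument to compare against. Your proof is correct and complete for the statement as written, which only asserts that a congruence exists and does not claim the two normal forms are inequivalent. It is the standard classical route:
\begin{enumerate}
\item diagonalize the form, using $(e_i+e_j)^TA(e_i+e_j)=2a_{ij}\neq 0$ when the diagonal vanishes;
\item show every nondegenerate binary form $\operatorname{diag}(a,b)$ represents $1$, so it is congruent to $\operatorname{diag}(1,ab)$;
\item normalize the last nonzero entry modulo squares, using that $\mathbb{F}_q^\times/(\mathbb{F}_q^\times)^2$ has order $2$.
\end{enumerate}
What your approach buys over the citation is self-containment. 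The only nontrivial input is the pigeonhole trick already used in the proof of \cref{prop:php fin fld}. You apply it to $\{as^2 : s\in\mathbb{F}_q\}$ and $\{1-bt^2 : t\in\mathbb{F}_q\}$, each of size $(q+1)/2$, instead of to $S$ and $a-S$.

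There is one small slip. In your parenthetical special case where $a/b$ is a square, you should take $s=u$, not $s=au$. With $u^2+w^2=a^{-1}$ and $t=w\sqrt{a/b}$, the choice $s=u$ gives $as^2+bt^2=a(u^2+w^2)=1$. The choice $s=au$ gives $a^3u^2+aw^2$, which is not $1$ in general. This is harmless, since the general pigeonhole argument right after it covers every case, and the special case (and the appeal to \cref{prop:php fin fld}(1) there) can simply be deleted.

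Two details are worth stating explicitly. First, the entry $c$ in $\operatorname{diag}(1,c)$ is nonzero because congruence preserves rank. Second, the degenerate case $r=0$ is covered by the first normal form, since the second form does not make sense there.
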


Recall that $N(n,r)$ represents the number of symmetric matrices of rank $r$ in $M_n(\mathbb{F}_q)$, given in \cref{thm:symm rank conunt}. 
We are now ready to give the formula for the number of $\PSD_3$ matrices of given rank in $M_n(\mathbb{F}_q)$. 

\begin{theorem}\label{thm:psd 3 count}
The number of matrices in $\PSD_3(n)$ of rank $r$ is given by 

\[
\psd_3(n,r) = 
\begin{cases}

\displaystyle
\prod_{i=1}^{\lfloor\frac{r-1}{2}\rfloor}\dfrac{q^{2i}}{q^{2i}-1}\prod_{i=0}^{r-1}(q^{n-i}-1)
& \text{if $q$ is even}, \\[1em]

\displaystyle
N(n,r) 
& \text{if $r< n$ and  $q$ is odd}\\[1em]

\displaystyle
\pd_3(n)
& \text{if $r=n$ and $q$ is odd}.
\end{cases}
\]

\end{theorem}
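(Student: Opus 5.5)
I will split the count by the parity of $q$ and, for $q$ odd, by whether $r<n$ or $r=n$. In every case the task is to decide exactly which symmetric matrices of rank $r$ have the form $MM^T$ with $M\in M_n(\mathbb{F}_q)$, and then count them using \cref{thm:symm rank conunt}.

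\textbf{Case $q$ odd, $r<n$.} I will show that every symmetric $A$ of rank $r<n$ is $\PSD_3$, so $\psd_3(n,r)=N(n,r)$. By \cref{prop:char_of_symm_q_odd}, $A=PDP^T$ with $P$ invertible and $D$ equal either to $\operatorname{diag}(1,\dots,1,0,\dots,0)$ or to $\operatorname{diag}(1,\dots,1,a,0,\dots,0)$ with $a\in\mathbb{F}_q^-$. In the first case $A=(PD)(PD)^T$, since $D^2=D$.

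In the second case there is at least one zero diagonal slot, because $r<n$. By \cref{prop:php fin fld}(1) we can write $a=s^2+t^2$. Let $M_0$ be the $n\times n$ matrix whose rows are $e_1,\dots,e_{r-1}$, then $s e_r+t e_{r+1}$, then zero rows. Then $M_0M_0^T=D$, and so $A=(PM_0)(PM_0)^T$. This gives the case.

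\textbf{Case $q$ odd, $r=n$.} Here $M$ must be invertible, so the rank-$n$ $\PSD_3$ matrices are exactly $\PD_3(n)$. Hence $\psd_3(n,n)=\pd_3(n)$, which \cref{thm:posdef_3 count} computes.

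\textbf{Case $q$ even.} By \cref{lem:posdef 3 q even}, a nonzero symmetric matrix is $\PSD_3$ if and only if some diagonal entry is nonzero. So
\[
\psd_3(n,r)=N(n,r)-Z(n,r),
\]
where $Z(n,r)$ counts the symmetric matrices with zero diagonal (alternating matrices) of rank $r$. Alternating matrices have even rank, so $Z(n,r)=0$ for odd $r$. I would then need $N(n,r)=\prod_{i=1}^{\lfloor (r-1)/2\rfloor}\frac{q^{2i}}{q^{2i}-1}\prod_{i=0}^{r-1}(q^{n-i}-1)$ when $r$ is odd; this holds because $\lfloor r/2\rfloor=\lfloor (r-1)/2\rfloor$ for odd $r$.

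For even $r=2k$ I need the classical count of alternating matrices of rank $2k$:
\[
Z(n,2k)=\prod_{i=1}^{k}\frac{q^{2i-2}}{q^{2i}-1}\prod_{i=0}^{2k-1}(q^{n-i}-1).
\]
I would derive this either by an orbit–stabilizer argument using the symplectic group, or by a MacWilliams-type recurrence. The recurrence comes from \cref{lem:mac symm class} restricted to zero diagonal, where case (2) cannot occur: $Z(n,r)=q^rZ(n-1,r)+(q^{n-1}-q^{r-2})Z(n-1,r-2)$.

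Subtracting, the product $\prod_{i=0}^{2k-1}(q^{n-i}-1)$ factors out, and the claim reduces to the identity
\[
\prod_{i=1}^{k}\frac{q^{2i}}{q^{2i}-1}-\prod_{i=1}^{k}\frac{q^{2i-2}}{q^{2i}-1}=\prod_{i=1}^{k-1}\frac{q^{2i}}{q^{2i}-1}.
\]
Multiplying through by $\prod_{i=1}^k(q^{2i}-1)$ turns this into $q^{k(k+1)}-q^{k(k-1)}=q^{k(k-1)}(q^{2k}-1)$, which is true.

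The main obstacle is establishing the alternating-matrix count $Z(n,2k)$ cleanly and checking it against this recurrence; everything else is a direct reduction to earlier results.
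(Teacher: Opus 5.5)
\textbf{There is a genuine gap in your even-$q$ case: the lemma it rests on is false for singular matrices.}

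Your two odd-$q$ cases are fine and agree with the paper. For $r<n$ you reproduce the paper's argument: congruence via \cref{prop:char_of_symm_q_odd}, then $a=s^2+t^2$ spread across the $r$-th and $(r+1)$-st coordinates, which is where $r<n$ is used. For $r=n$, your remark that $\operatorname{rank}(MM^T)=n$ forces $M$ to be invertible gives $\psd_3(n,n)=\pd_3(n)$ directly, where the paper only cites MacWilliams.

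The failing step is ``by \cref{lem:posdef 3 q even}, a nonzero symmetric matrix is $\PSD_3$ if and only if some diagonal entry is nonzero.'' The ``only if'' direction of that lemma is false for singular matrices, so you cannot use it here. Over $\mathbb{F}_2$, the matrix $M$ with rows $(1,1,0)$, $(0,1,1)$, $(0,0,0)$ gives $MM^T=\left(\begin{smallmatrix}0&1&0\\1&0&0\\0&0&0\end{smallmatrix}\right)$, a nonzero alternating matrix. The reason is that in characteristic two $x\cdot x=(\sum_i x_i)^2$, so Gram matrices of vectors in the hyperplane $\sum_i x_i=0$ have zero diagonal. The criterion holds in two situations only. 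One is when $A$ is nonsingular: then $M$ is invertible, and $\iden_n=M^{-1}A(M^{-1})^T$ would be alternate. The other is when $M$ is restricted to $\operatorname{rank}A$ columns.

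Consequently $N(n,r)-Z(n,r)$ undercounts whenever $r$ is even and $2\le r<n$. Every alternating matrix of rank $2k<n$ is congruent to $G\oplus\mathbf{0}_{n-2k}$, where $G$ is the Gram matrix of $e_1+e_2,\dots,e_{2k}+e_{2k+1}$. This $G$ is the adjacency matrix of a path on $2k$ vertices, which is alternating with determinant $1$ in characteristic two. Hence every such matrix is $\PSD_3$. So for $q$ even the correct count is $\psd_3(n,r)=N(n,r)$ for $r<n$, exactly as for $q$ odd. For instance, over $\mathbb{F}_2$ one has $\psd_3(3,2)=N(3,2)=28$, whereas the displayed formula gives $21$. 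The first case of the statement therefore cannot be proved as written once $n\ge 3$.

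Your recurrence for $Z$ is correct, as are its closed form (the verification reduces to $q^{2k}(q^{n-2k}-1)+(q^{2k}-1)=q^n-1$) and the product identity. Together, though, they count the rank-$r$ symmetric matrices congruent to $\iden_r\oplus\mathbf{0}_{n-r}$, not the rank-$r$ matrices in $\PSD_3(n)$. Your subtraction argument is legitimate only at $r=n$, where it gives $\pd_3(n)=N(n,n)-Z(n,n)$ for $q$ even. The paper gives no proof of this case beyond the citation to MacWilliams. So the defect lies in \cref{lem:posdef 3 q even} as stated and in the first case of the theorem itself, not only in your route.
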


\begin{proof}
For the proof of the first and third cases, reader can refer to \cite{Macwilliams1969}. We will prove the case when $r<n$ and $q$ is odd. 
As mentioned in \cref{prop:char_of_symm_q_odd}, every symmetric matrix of rank $r$ is congruent to 
$\operatorname{diag}(\underbrace{1, \dots, 1}_{r },\underbrace{0, \dots, 0}_{n-r })$ or $\operatorname{diag}(\underbrace{1, \dots, 1}_{r-1},a,\underbrace{0, \dots, 0}_{n-r})$, where $a$ is a negative element in $\mathbb{F}_q$. 
In the first case, it trivially follows that $A=BB^T$ for some $B\in M_n(\mathbb{F}_q)$. Suppose now that $A$ lies in the second case. By \cref{prop:php fin fld}(1), every element in $\mathbb{F}_q$ can be written as a sum of two squares. Let $a=b^2+c^2$ for some $b,c\in\mathbb{F}_q$. Now consider the matrix $B\in M_n(\mathbb{F}_q)$ in which the first $r-1$ diagonal entries are $1$, the $(r,r)$'th entry is $b$, the $(r,r+1)$'th entry is $c$, and all other entries are $0$. That is,
\[
B =
\left(
\begin{array}{c|c|c}
\iden_{r-1} & \textbf{0}_{(r-1)\times2} & \textbf{0}_{(r-1)\times (n-r-1)} \\ \hline
\textbf{0}_{2\times (r-1)} & \begin{array}{cc} b & c \\ 0 & 0 \end{array} & \textbf{0}_{2\times (n-r-1)} \\ \hline
\textbf{0}_{(n-r-1)\times (r-1)} & \textbf{0}_{(n-r-1)\times 2} & \textbf{0}_{(n-r-1)\times (n-r-1)}
\end{array}
\right).
\]
Observe that $BB^T=\operatorname{diag}(\underbrace{1, \dots, 1}_{r-1 \text{ times}},a,\underbrace{0, \dots, 0}_{n-r \text{ times}})$. Therefore, $A=(MB)(MB)^T.$
\end{proof}

\subsection{Type 4}
\label{sec:psd 4}

\begin{definition}
    A symmetric matrix in $M_n(\mathbb{F}_q)$ is said to be \textit{positive semidefinite of type 4} (denoted $\PSD_4$) if it is orthogonally similar to a diagonal matrix with all of its diagonal entry nonnegative. We denote the set of $\PSD_4$ matrices in $M_n(\mathbb{F}_q)$ by $\PSD_4(n)$, and let $\psd_4(n,r)$ denote the number of rank $r$ matrices in $\PSD_4(n)$.
\end{definition}

\begin{remark}
By an argument similar to that in \cref{rem:posdef 4 and posdef 3 rel}, we have $\PSD_4(n)\subseteq\PSD_3(n)$ for all $n\geq 1$.
\end{remark}

The number of orthogonal matrices in $\mathrm{GL}_n(\mathbb{F}_q)$ is given in \cref{thm:orth matrices count}. The number of matrices in $\PSD_4(n)$ admits a formula analogous to that for $\pd_4(n)$, and satisfies a similar generating function.

\begin{theorem}\label{thm:psd 4 enum}
The number of matrices in $\PSD_4(n)$ of rank $r$ is given by
\[
\psd_4(n,r)=\sum_{\substack{k_1 + \cdots + k_{s} = n-r \\ k_i \geq 0}} 
\frac{|\mathrm{O}_n(\mathbb{F}_q)|}{|\mathrm{O}_{k_1}(\mathbb{F}_q)||\mathrm{O}_{k_2}(\mathbb{F}_q)|\cdots|\mathrm{O}_{k_s}(\mathbb{F}_q)||\mathrm{O}_{r}(\mathbb{F}_q)|},
\]
where $s=|\mathbb{F}_q^{+}|$. 
\end{theorem}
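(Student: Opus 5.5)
The plan is to repeat the orbit-counting argument of \cref{thm:posdef 4 count}, now also allowing the eigenvalue $0$. Every matrix in $\PSD_4(n)$ of rank $r$ is orthogonally similar to a diagonal matrix $D$ with nonnegative diagonal entries, exactly $r$ of which are nonzero. I will fix a listing $a_1,\dots,a_s$ of $\mathbb{F}_q^{+}$, where $s=|\mathbb{F}_q^{+}|$, and take as orbit representatives
\[
D=\operatorname{diag}(\underbrace{a_1,\dots,a_1}_{k_1},\dots,\underbrace{a_s,\dots,a_s}_{k_s},\underbrace{0,\dots,0}_{m}).
\]
Each representative is then determined by the multiplicity vector $(k_1,\dots,k_s,m)$.

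\textbf{A caveat on indexing.} Since $\operatorname{rank} D$ is the number of nonzero entries, the natural indexing is $k_1+\cdots+k_s=r$ with zero block of size $m=n-r$. The stabilizer would then contain $\mathrm{O}_{n-r}(\mathbb{F}_q)$ rather than $\mathrm{O}_r(\mathbb{F}_q)$. I will carry out the proof with this indexing, so the displayed formula should be read with the roles of $r$ and $n-r$ for the zero eigenvalue matched accordingly. I would double-check this against $n=1$: there $\psd_4(1,1)=s$ and $\psd_4(1,0)=1$.

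\textbf{Step 1: distinct representatives give distinct orbits.} Orthogonal similarity is in particular similarity, so the characteristic polynomial is an invariant. Hence the multiset of diagonal entries, and therefore $(k_1,\dots,k_s,m)$, is an invariant of the orbit. It follows that $\psd_4(n,r)$ is the sum of the orbit sizes $|\mathrm{O}_n(\mathbb{F}_q)\cdot D|$ over all compositions $(k_1,\dots,k_s)$ of $r$ into $s$ nonnegative parts. Parts equal to zero are allowed, and $|\mathrm{O}_0|=1$ by convention.

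\textbf{Step 2: orbit sizes via \cref{lem:kung thm}.} I apply \cref{lem:kung thm} with $\mathcal{G}=\mathrm{O}_n(\mathbb{F}_q)$, which reduces the problem to computing the stabilizer, i.e.\ the centralizer $c(D)$ of $D$ in $\mathrm{O}_n(\mathbb{F}_q)$. The primary components of $D$ are the eigenspaces $V_{x-a_i}$ and $V_x$, spanned by the corresponding coordinate vectors. By the argument in \cref{lem:mat comm A}, any $S$ commuting with $D$ preserves each of these eigenspaces. Therefore $S$ is block diagonal, with blocks of sizes $k_1,\dots,k_s,m$. Conversely, every such block diagonal matrix commutes with $D$, because $D$ acts as a scalar on each block.

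\textbf{Step 3: the stabilizer is a product of orthogonal groups.} For block diagonal $S=\operatorname{diag}(S_1,\dots,S_{s+1})$ we have $S^TS=\operatorname{diag}(S_1^TS_1,\dots)$. Hence $S$ is orthogonal if and only if each block $S_j$ is orthogonal, and so
\[
c(D)\cong \mathrm{O}_{k_1}(\mathbb{F}_q)\times\cdots\times\mathrm{O}_{k_s}(\mathbb{F}_q)\times\mathrm{O}_{m}(\mathbb{F}_q).
\]
By orbit--stabilizer, the orbit of $D$ therefore has size $|\mathrm{O}_n|/\bigl(\prod_i|\mathrm{O}_{k_i}|\cdot|\mathrm{O}_m|\bigr)$. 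Summing these sizes over the compositions from Step 1 gives the formula.

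I expect the main obstacle to be Step 2. The definition of $c_{\mathcal{G}}$ in \cref{lem:kung thm} is phrased for restrictions to primary components, and one has to check that those restrictions genuinely assemble into the product of the smaller orthogonal groups. This is exactly what the block-diagonal argument of Steps 2 and 3 does directly.
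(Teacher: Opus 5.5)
Your proof follows the paper's approach: the paper reruns the orbit--stabilizer count of \cref{thm:posdef 4 count} for the conjugation action of $\mathrm{O}_n(\mathbb{F}_q)$ on sorted nonnegative diagonal representatives. Your direct block-diagonal computation of the stabilizer as $\prod_i\mathrm{O}_{k_i}(\mathbb{F}_q)\times\mathrm{O}_m(\mathbb{F}_q)$ is a cleaner substitute for the appeal to \cref{lem:kung thm}. You should also state that every nonnegative diagonal matrix is orthogonally similar to a sorted one, since permutation matrices are orthogonal.

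Your indexing caveat is correct. The paper's proof gives the zero eigenvalue multiplicity $r$, so the displayed formula actually counts matrices of rank $n-r$; for $n=1$ it gives $\psd_4(1,1)=1$ instead of $s$. The correct statement is the one you prove, with $k_1+\cdots+k_s=r$ and $|\mathrm{O}_{n-r}(\mathbb{F}_q)|$ in the denominator.
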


\begin{proof}
The proof follows the same reasoning as in \cref{thm:posdef 4 count}, except that in this case, zero is also allowed as a diagonal entry with multiplicity $r$.
\end{proof}

\begin{corollary}
    Let $\psd_4(n)$ denote the number of positive definite matrices of type 4 in $M_n(\mathbb{F}_q)$. The sequence $(\psd_4(n))_{n\geq 0}$ satisfies the generating function-type equation, 
\[
    \sum_{n\geq0}\psd_4(n)\frac{x^n}{|\mathrm{O}_n(\mathbb{F}_q)|}=\left(\sum_{k\geq 0}\frac{x^k}{|\mathrm{O}_k(\mathbb{F}_q)|}\right)^{s+1},
\]
    where $s=|\mathbb{F}_q^{+}|$.
\end{corollary}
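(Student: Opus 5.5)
The plan is to derive the generating-function identity directly from \cref{thm:psd 4 enum}, in the same way \cref{cor:posdef 4 gen func} follows from \cref{thm:posdef 4 count}. First, sum the formula of \cref{thm:psd 4 enum} over all ranks $r\in[0,n]$ to get
\[
\psd_4(n)=\sum_{r=0}^{n}\psd_4(n,r)=\sum_{\substack{k_0+k_1+\cdots+k_s=n\\ k_i\geq 0}}\frac{|\mathrm{O}_n(\mathbb{F}_q)|}{|\mathrm{O}_{k_0}(\mathbb{F}_q)||\mathrm{O}_{k_1}(\mathbb{F}_q)|\cdots|\mathrm{O}_{k_s}(\mathbb{F}_q)|}.
\]
Here the extra index $k_0$ is the multiplicity of the eigenvalue $0$, and $k_1,\dots,k_s$ are the multiplicities of the $s$ positive elements. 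One point needs care. In \cref{thm:psd 4 enum} the composition runs over $k_1+\cdots+k_s=n-r$ and the zero block carries $|\mathrm{O}_r(\mathbb{F}_q)|$. If the zero eigenvalue has multiplicity $n-r$ (the rank being $r$), then the roles of $r$ and $n-r$ are simply relabelled. Either way, summing over $r$ turns the double sum into a single sum over weak compositions of $n$ into $s+1$ parts, with one factor $|\mathrm{O}_{k_i}(\mathbb{F}_q)|$ per part. I also adopt the convention $|\mathrm{O}_0(\mathbb{F}_q)|=1$, so that the $n=0$ term and empty blocks contribute correctly.

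Next, divide both sides by $|\mathrm{O}_n(\mathbb{F}_q)|$, multiply by $x^n$, and sum over $n\geq 0$. The right-hand side becomes
\[
\sum_{n\geq 0}\sum_{k_0+\cdots+k_s=n}\prod_{i=0}^{s}\frac{x^{k_i}}{|\mathrm{O}_{k_i}(\mathbb{F}_q)|}.
\]
By the Cauchy product rule for formal power series, this is exactly $\bigl(\sum_{k\geq0}x^k/|\mathrm{O}_k(\mathbb{F}_q)|\bigr)^{s+1}$. No convergence issue arises, since the identity is one of formal power series.

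The argument is essentially bookkeeping. The only step I expect to need attention is the reconciliation of the indexing conventions for the zero block in \cref{thm:psd 4 enum}, together with the implicit convention $|\mathrm{O}_0|=1$. Once these are fixed, the zero eigenvalue behaves exactly like an additional $(s+1)$-st "positive value" in the proof of \cref{thm:posdef 4 count}, and the exponent $s+1$ follows.
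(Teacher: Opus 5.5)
Your proposal is correct and takes the route the paper intends: the paper states this corollary without proof, as an immediate consequence of \cref{thm:psd 4 enum}, just as \cref{cor:posdef 4 gen func} follows from \cref{thm:posdef 4 count}. You sum over $r$, treat the zero block as an $(s+1)$-st part with $|\mathrm{O}_0(\mathbb{F}_q)|=1$, and read off the Cauchy product. Your remark that the theorem's $r$ is really the multiplicity of the eigenvalue $0$ (so the stated formula counts nullity $r$, not rank $r$) is accurate, and it is harmless once you sum over $r$.
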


\subsection{Type 5}
\label{sec:psd 5}

\begin{definition}
A symmetric matrix in $M_n(\mathbb{F}_q)$ is said to be \textit{positive semidefinite of type 5} (denoted $\PSD_5$), if all it principal minors are nonnegative. 
We denote the set of $\PSD_5$ matrices in $M_n(\mathbb{F}_q)$ by $\PSD_5(n)$, and let $\psd_5(n)=|\PSD_5(n)|$.
\end{definition}

\begin{remark}\label{rem:psd 5 even}
    Since every element of a field $\mathbb{F}_q$ of characteristic two is nonnegative, every symmetric matrix in $M_n(\mathbb{F}_q)$ is a $\PSD_5$ matrix.
\end{remark}

By \cref{rem:psd 5 even} the only interesting case remaining is when $\mathbb{F}_q$ has odd characteristic. We start by giving formula for the number of matrices in $\PSD_5(2)$.

\begin{proposition}\label{prop: psd 5 in M2(Fq)}
    Let $\mathbb{F}_q$ be a field of odd characteristic. The number of matrices in $\PSD_5(2)$ is given by 
\[
\psd_5(2)=\begin{cases}
    \displaystyle
    3\left(\frac{q-1}{2}\right)^2+(q-1)\left(\frac{q-5}{4}\right)\left(\frac{q-1}{2}\right)+q(2q-1)
    & \text{if $q\equiv 1 \pmod 4$}, \\[1em]

    \displaystyle
    3\left(\frac{q-1}{2}\right)^2+(q-1)\left(\frac{q-3}{4}\right)\left(\frac{q-1}{2}\right)+(2q-1)
    & \text{if $q\equiv 3 \pmod 4$}.
    \end{cases}
\]
\end{proposition}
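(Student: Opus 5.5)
We write a general symmetric matrix in $M_2(\mathbb{F}_q)$ as $A=\begin{pmatrix} a & b\\ b & c\end{pmatrix}$. Then $A\in\PSD_5(2)$ exactly when $a,c\in\mathbb{F}_q^{+}\cup\{0\}$ and $ac-b^2\in\mathbb{F}_q^{+}\cup\{0\}$. The plan is to split according to whether both diagonal entries are positive or at least one of them is zero, and to count the admissible values of $b$ in each case. The first two summands of the stated formula should come from the first case and the last summand from the second.

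\textbf{Both diagonal entries positive.} There are $\left(\frac{q-1}{2}\right)^2$ such pairs $(a,c)$. Fix one and set $t=ac\in\mathbb{F}_q^{+}$; we count $b$ with $t-b^2$ nonnegative.
\begin{itemize}
\item $b=0$ always works, since $t>0$: this gives $1$ choice.
\item $b^2=t$ makes the determinant zero: this gives $2$ choices.
\item Otherwise $y=b^2$ must be positive with $y\neq t$ and $t-y$ positive, and each such $y$ gives $2$ values of $b$. Dividing by the positive element $t$, this is the number of $z\in\mathbb{F}_q^{+}$ with $1-z\in\mathbb{F}_q^{+}$.
\end{itemize}
For the last count we use \cref{thm:x^2=-1}. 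If $q\equiv 1\pmod 4$, then $-1$ is a square, so $1-z$ is positive iff $z-1$ is, and \cref{thm:a a-1 is +ve} gives $\frac{q-5}{4}$. If $q\equiv 3\pmod 4$, then $1-z$ is positive iff $z-1$ is negative. So the count is $\frac{q-1}{2}-1-\frac{q-3}{4}=\frac{q-3}{4}$, where we remove $z=1$ and the $z$ counted in \cref{thm:a a-1 is +ve}; equivalently one can use \cref{cor:+-} after replacing $z$ by $-z$. Each pair $(a,c)$ therefore contributes $3+2\cdot\frac{q-5}{4}$, respectively $3+2\cdot\frac{q-3}{4}$. Multiplying by $\left(\frac{q-1}{2}\right)^2$ gives $3\left(\frac{q-1}{2}\right)^2+(q-1)\frac{q-5}{4}\cdot\frac{q-1}{2}$ and its $q\equiv 3$ analogue.

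\textbf{A zero diagonal entry.} There are $2\cdot\frac{q+1}{2}-1=q$ pairs $(a,c)$ with $a,c$ nonnegative and at least one of them zero. For each, the determinant is $-b^2$, which is nonnegative iff $-1$ is a square or $b=0$.
\begin{itemize}
\item For $q\equiv 1\pmod 4$, every $b\in\mathbb{F}_q$ is allowed.
\item For $q\equiv 3\pmod 4$, only $b=0$ is allowed.
\end{itemize}
Summing this boundary contribution gives the remaining term.

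\textbf{Main obstacle.} The delicate step is this boundary bookkeeping. The count of $q$ pairs with a zero entry, and the resulting boundary totals $q\cdot q$ and $q$, do not obviously match the stated terms $q(2q-1)$ and $2q-1$. I would recheck the enumeration of the degenerate pairs, including whether the convention "nonnegative" $=\mathbb{F}_q^{+}\cup\{0\}$ is the one intended. I would also verify the final expression numerically for $q=3,5$ before trusting the closed form.
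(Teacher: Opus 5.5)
Your argument is correct and uses essentially the paper's decomposition: both diagonal entries positive (split into $b=0$, determinant zero, determinant positive), then at least one zero diagonal entry. One cosmetic difference is in the both-positive case. The paper factors the determinant as $b^2(ac/b^2-1)$ and applies \cref{thm:a a-1 is +ve} directly to $y=ac/b^2$. This avoids your split on whether $-1$ is a square (equivalently, $1-z=z(z^{-1}-1)$). Your route through $1-z$ is also valid and gives the same first two terms.

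The mismatch you flagged in the boundary case is not a slip on your side. The stated formula is wrong, and your totals $q\cdot q$ and $q$ are the correct ones. The paper's proof asserts that there are $2q-1$ pairs of diagonal entries with at least one zero. That is the number of such pairs in all of $\mathbb{F}_q^2$. It ignores that the other diagonal entry is a $1\times 1$ principal minor and must lie in $\mathbb{F}_q^{+}\cup\{0\}$, which leaves exactly your $q$ pairs.

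The smallest case settles it. Over $\mathbb{F}_3$ the matrices in $\PSD_5(2)$ are the three matrices $\begin{pmatrix}1&b\\ b&1\end{pmatrix}$ with $b\in\mathbb{F}_3$, together with $\operatorname{diag}(1,0)$, $\operatorname{diag}(0,1)$ and the zero matrix. So $\psd_5(2)=6$, while the stated formula gives $3+0+5=8$. The two spurious matrices are $\operatorname{diag}(2,0)$ and $\operatorname{diag}(0,2)$, with $2\in\mathbb{F}_3^{-}$. Likewise, for $q=5$ the true count is $12+25=37$, not $57$.

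So the only thing missing from your proposal is the conclusion. Instead of trying to reach $q(2q-1)$ and $2q-1$, you should state that the proposition as written is false. Your argument proves the corrected count
\[
\psd_5(2)=3\left(\frac{q-1}{2}\right)^2+(q-1)\left(\frac{q-5}{4}\right)\left(\frac{q-1}{2}\right)+q^2=\frac{(q-1)^2(q+1)}{8}+q^2
\]
for $q\equiv 1\pmod 4$, and
\[
\psd_5(2)=3\left(\frac{q-1}{2}\right)^2+(q-1)\left(\frac{q-3}{4}\right)\left(\frac{q-1}{2}\right)+q=\frac{(q-1)^2(q+3)}{8}+q
\]
for $q\equiv 3\pmod 4$. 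These give $6$, $37$ and $52$ for $q=3,5,7$, matching direct enumeration.
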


\begin{proof}
For
\[
A=\begin{pmatrix}
a & c \\
c & b
\end{pmatrix}
\]
to be a $\PSD_5$ matrix, $a$, $b$, and $\det(A)$ must be nonnegative. We partition the set of $\PSD_5$ matrices in $M_2(\mathbb{F}_q)$ into three cases and analyze each separately.

\noindent \textbf{Case 1:} $a,b \in \mathbb{F}_q^{+}$ and $c=0$. In this case, there are $((q-1)/2)^2$ choices, and all are valid.

\noindent \textbf{Case 2:} $a,b \in \mathbb{F}_q^{+}$ and $c \neq 0$. In this case, $\det(A)=ab-c^2 = c^2(ab/c^2-1)$ must be nonnegative.
First, consider the case $\det(A)=0$. For each choice of $a,b \in \mathbb{F}_q^{+}$, there are exactly two choices of $c \in \mathbb{F}_q^\times$, that is $x$ and $-x$, where $x^2=ab$. Hence, there are $2((q-1)/2)^2$
such matrices.
Now suppose $\det(A) > 0$. By \cref{thm:a a-1 is +ve}, there are $(q-5)/4$ choices for $ab/c^2$ when $q \equiv 1 \pmod{4}$ and $(q-3)/4$ choices for $ab/c^2$ when $q \equiv 3 \pmod{4}$. For each such choice, there are $q-1$ choices for $c$ and $(q-1)/2$ choices for the pair $(a,b)$. Therefore, this contributes
\[
(q-1)\left(\frac{q-5}{4}\right)\left(\frac{q-1}{2}\right)
\]
matrices when $q \equiv 1 \pmod{4}$, and
\[
(q-1)\left(\frac{q-3}{4}\right)\left(\frac{q-1}{2}\right)
\]
matrices when $q \equiv 3 \pmod{4}$.

\noindent \textbf{Case 3:} At least one of $a$ or $b$ is zero. In this case, there are $2q-1$ ways to choose the pair $(a,b)$.
If $q \equiv 1 \pmod{4}$, then for any such pair $(a,b)$, every $c \in \mathbb{F}_q$ yields a $\PSD_5$ matrix, giving $q$ choices for $c$.
If $q \equiv 3 \pmod{4}$, then for any such pair $(a,b)$, only $c=0$ yields a $\PSD_5$ matrix, giving exactly one choice for $c$.
\end{proof}

We next establish an inclusion relation between the classes $\PSD_5(n)$ and $\PSD_1(n)$ over the finite field $\mathbb{F}_q$, where $q\equiv3\pmod4$ is a prime power.

\begin{proposition}\label{prop:psd 1 psd 5 rel}
Let $q\equiv3\pmod4$, where $q$ is a prime power. Over the field $\mathbb{F}_q$, we have $\PSD_5(n)\subseteq\PSD_1(n)$ for every $n\ge1$.
\end{proposition}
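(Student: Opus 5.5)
We argue by induction on $n$, using that $\PSD_5$ is closed under taking principal submatrices, so that $A_{[n-1]}\in\PSD_5(n-1)\subseteq\PSD_1(n-1)$. The only arithmetic input is that, since $q\equiv 3\pmod 4$, the element $-1$ is negative by \cref{thm:x^2=-1}. Hence $-u$ is negative for every positive $u$. The base case $n=1$ is immediate: a nonnegative scalar is either $0$ (take $K_1=\varnothing$) or positive (take $K_1=\{1\}$).

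For the inductive step, write $B\in\PSD_5(n)$ in the block form of \cref{lem:mac symm class}, with $A=B_{[n-1]}$ of rank $r$. The induction hypothesis supplies sets $K_1,\dots,K_{n-1}$ witnessing $A\in\PSD_1(n-1)$, and these also serve for $B$ for all $m\le n-1$. We showed after \cref{def:posdef 1} that $|K_{n-1}|=r$. It remains to produce $K_n$, and we split according to \cref{lem:mac symm class}.

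\textbf{Case $\operatorname{rank}B=r$.} Take $K_n=K_{n-1}$. Every principal submatrix of size greater than $r$ is singular because of the rank.

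\textbf{Case $\operatorname{rank}B=r+1$.} Here $v$ lies in the row space of $A$. As in the proof of \cref{lem:psd 1 extn}, write $a_2=a_2'+p$. Multilinearity of the determinant gives $\det B_{K_{n-1}\cup\{n\}}=p\det A_{K_{n-1}}$. This quantity is a principal minor of $B$, so it is nonnegative. It is also nonzero, since $p\neq 0$ (otherwise the rank would stay $r$). Hence it is positive. The leading principal minors of $B_{K_{n-1}\cup\{n\}}$ are therefore those of the $\PD_1$ matrix $A_{K_{n-1}}$ together with this positive determinant, so this submatrix is $\PD_1$. Take $K_n=K_{n-1}\cup\{n\}$; larger principal submatrices are singular by rank.

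\textbf{Case $\operatorname{rank}B=r+2$.} This is the main obstacle: we must show it cannot occur, and this is exactly where $q\equiv3\pmod4$ enters. A symmetric matrix of rank $\rho$ has a nonsingular principal submatrix of size $\rho$ (the principal-minor fact already used after \cref{def:posdef 1}). So there is $M\subseteq[n-1]$ with $|M|=r+1$ such that $C=B_{M\cup\{n\}}$ is nonsingular. Then $A_M$ is singular, since $\operatorname{rank}A=r$, but has rank exactly $r$, since $C$ is nonsingular. Consequently $\operatorname{adj}(A_M)$ has rank one, and we can write $\operatorname{adj}(A_M)=\lambda xx^T$ with $x$ spanning $\ker A_M$ and $\lambda\ne0$.

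Bordered-determinant expansion then gives $\det C=-\lambda\,(x^Tw)^2$, where $w=v_M$. Since $\det C\neq0$, we have $x^Tw\neq 0$. On the other hand, the diagonal entries $\lambda x_i^2$ of the adjugate are principal minors of $B$ of size $r$. They are therefore nonnegative, and at least one of them is nonzero because $x\ne0$. It follows that $\lambda$ is positive. Hence $\det C$ is $-1$ times a positive element, which is negative. This contradicts $B\in\PSD_5(n)$, so the third case is impossible and the induction is complete.
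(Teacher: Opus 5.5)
Your proof is correct, and its skeleton is the paper's: induction on $n$, the rank trichotomy of \cref{lem:mac symm class}, and the first two rank cases handled through the splitting $a_2=a_2'+p$ from \cref{lem:psd 1 extn}. Here you make explicit what the paper only asserts, namely that nonnegativity of the principal minor $p\det A_{K_{n-1}}$ forces $p$ to be positive. As in the paper, the rank-$(r+2)$ case is excluded by exhibiting a negative principal minor; you differ in how that minor is found.

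The paper works in two steps:
\begin{itemize}
\item It first treats the corank-one situation $\operatorname{rank}A=n-2$. It brings $A$ to $\operatorname{diag}(1,\dots,1,0)$ via \cref{prop:char_of_symm_q_odd} and computes $\det B=-\det(C)^{-2}y^2$.
\item It then reduces $\operatorname{rank}A\le n-3$ to this case by finding an index $j$ for which $B_{I\cup\{j,n\}}$ is in the corank-one situation.
\end{itemize}
You instead pass directly to a nonsingular principal submatrix $B_{M\cup\{n\}}$ of size $r+2$. You then use $\operatorname{adj}(A_M)=\lambda xx^T$ together with the bordered-determinant expansion, which handles all ranks at once and needs no normal form.

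Your route also makes explicit a point the paper's congruence step leaves unstated. \cref{prop:char_of_symm_q_odd} also allows $A$ to be congruent to $\operatorname{diag}(1,\dots,1,a,0)$ with $a$ negative. In that case the same computation would give $\det B=-a\,y^2\det(C)^{-2}$, which is positive. So the paper tacitly needs the nondegenerate part of $A$ to have square discriminant; this follows from $A\in\PSD_1(n-1)$, but it is not said. Your observation that $\lambda$ is positive, because the diagonal of $\operatorname{adj}(A_M)$ consists of nonnegative principal minors of size $r$, is exactly this fact, and you obtain it from the $\PSD_5$ hypothesis on $B$ alone.
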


\begin{proof}
We prove the statement by induction on $n$. For $n=1$, the statement is
trivially true. Assume that the statement holds for matrices of order
$n-1$.

Let $B\in \PSD_5(n)$ and write
\[
B=
\begin{pmatrix}
\begin{array}{c|c}
A&v\\
\hline
\\[-1em]
v^T&a_2
\end{array}
\end{pmatrix},
\]
where $A\in\PSD_5(n-1)$, $v\in \mathbb{F}_q^{n-1}$, and
$a_2\in\mathbb{F}_q$.

By the induction hypothesis, $A\in\PSD_1(n-1)$. If $v^T$ is linearly dependent on the rows of $A$, then it must satisfy either Case~1 or Case~2 of \cref{thm:psd 1 recur} for every principal minor of $B$ to be positive. Therefore, $B\in\PSD_1(n)$.

Suppose that $v$ is linearly independent of the rows of $A$. In this case, $A$ cannot have full rank, since otherwise the row space of
$A$ would contain $v^T$. First suppose that $\operatorname{rank}(A)=n-2$.
By \cref{prop:char_of_symm_q_odd}, there exists
$C\in \mathrm{GL}_{n-1}(\mathbb{F}_q)$ such that
\[
\operatorname{diag}(\underbrace{1,\dots,1}_{n-2},0)=C^TAC.
\]
Therefore,
\[
\left(
\begin{array}{c|c|c}
\iden_{n-2}&\mathbf{0}_{(n-2)\times1}&u\\
\hline
\mathbf{0}_{1\times(n-2)}&0&y\\
\hline
\\[-1em]
u^T&y&a_2
\end{array}
\right)
=
\left(
\begin{array}{c|c}
C^T&\mathbf{0}_{(n-1)\times1}\\
\hline
\mathbf{0}_{1\times(n-1)}&1
\end{array}
\right)
B
\left(
\begin{array}{c|c}
C&\mathbf{0}_{(n-1)\times1}\\
\hline
\mathbf{0}_{1\times(n-1)}&1
\end{array}
\right),
\]
where $v^TC=\begin{pmatrix}u^T&y\end{pmatrix}$ and, by \cref{lem:mac symm class}(3), $y\neq0$. Expanding the determinant of the matrix on the left along the last row, we
obtain
\[
\det
\left(
\begin{array}{c|c|c}
\iden_{n-2}&\mathbf{0}_{(n-2)\times1}&X^T\\
\hline
\mathbf{0}_{1\times(n-2)}&0&y\\
\hline
X&y&a_2
\end{array}
\right)
=-y^2.
\]
Therefore,
\[
\det(B)=-\det(C)^{-2}y^2,
\]
which is negative. Hence $B\notin\PSD_5(n)$.

Now consider the case when $\operatorname{rank}(A)=r\le n-3$. Let
$I\subseteq[n-1]$ be an index set of size $r$ such that $A_I$ has rank
$r$. If $v_I^T$ linearly depends on the rows of $A_I$, then
there exists some $j_1\in[n-1]\setminus I$ such that
$v_{I\cup\{j_1\}}^T$ is linearly independent of the rows of
$A_{I\cup\{j_1\}}$. Applying the above argument to the principal submatrix
$A_{I\cup\{j_1,n\}}$, we obtain $\det(A_{I\cup\{j_1,n\}})$ to be negative. Similarly, if $v_{I}^T$ is linearly independent of the rows of
$A_I$, choose any $j_2\in[n-1]\setminus I$. Then the same argument applied
to $A_{I\cup\{j_2,n\}}$ gives $\det(A_{I\cup\{j_2,n\}})$ is negative.

Therefore, we have shown that $v^T$ cannot be linearly independent of the rows of $A$. Hence, $v^T$ linearly depends on the rows of $A$, in which case we have proved that $B$ belongs to $\PSD_1(n)$. Therefore, by the induction, the proposition follows.
\end{proof}

At present, we do not have a simple closed-form expression for the number of matrices in $\PSD_5(n)$ over $\mathbb{F}_q$ when $n\geq 3$ and $q$ is odd. In the absence of such a formula, we instead obtain a structural formula for number of matrices in $\PSD_5(n)$ is given in \cref{thm:psd 5 struc formula}.

To prove \cref{thm:psd 5 struc formula}, we use tools from arithmetic geometry, namely the theory of Weil zeta functions. We begin by recalling the definition of an algebraic set.

\begin{definition}
Let $\{f_1,f_2,\dots,f_k\} \subseteq \mathbb{F}_q[x_1,x_2,\dots,x_n]$ be a system of polynomial equations. The \textit{algebraic set} defined by the system $\{f_1,f_2,\dots,f_k\}$ is the set of all common zeros of these polynomials in $\mathbb{F}_q^n$, that is,
\[
\{(a_1,a_2,\dots,a_n)\in \mathbb{F}_q^n \mid f_i(a_1,a_2,\dots,a_n)=0 \text{ for all } i\in [k]\}.
\]
For each integer $m \geq 1$, let $N_m(f_1,f_2,\dots,f_k)$ denote the number of points in the algebraic set defined by $\{f_1,f_2,\dots,f_k\}$ over the affine space $\mathbb{F}_{q^m}^n$, that is,
\[N_m(f_1,f_2,\dots,f_k)=\left|\left\{
(a_1,a_2,\dots,a_n)\in \mathbb{F}_{q^m}^n
\;\middle|\;
f_i(a_1,a_2,\dots,a_n)=0 \text{ for all } i\in [k]
\right\}\right|.
\]
\end{definition}

We describe the set $\PSD_5(n)$ as a disjoint union of subsets that can each be expressed as the projection of an algebraic set. 
Let $p$ be an odd prime, let $n$ be a positive integer, and let $\mathcal{S}=(S_1,S_2,\dots,S_{2^n-1})$ be an ordering of all nonempty subsets of $[n]$. 
For every nonempty subset $S \subseteq [n]$, define $\mathcal{X}_{S,m}(0)$ and $\mathcal{X}_{S,m}(1)$ to be the sets of all matrices $A \in M_n(\mathbb{F}_{p^m})$ such that $\det(A_S)=0$ and $\det(A_S)\in\mathbb{F}_{p^m}^{+}$, respectively.

For $\epsilon = (\epsilon_1, \dots, \epsilon_{2^n-1}) \in \{0,1\}^{2^n-1}$, we define
\[
\mathcal{X}_m(\epsilon)=\bigcap_{i\in [2^{n}-1]} \mathcal{X}_{S_i,m}(\epsilon_{i}).
\]
Observe that for each $\epsilon \in \{0,1\}^{2^n-1}$, $\mathcal{X}_m(\epsilon)$ is a subset of $\PSD_5(n)$ over $\mathbb{F}_{p^m}$.

Conversely, let $A \in \PSD_5(n)$ over $\mathbb{F}_{p^m}$. For each $i \in [2^{n}-1]$, define $\epsilon_{i} = 0$ if $\det(A_{S_i}) = 0$, and $\epsilon_{i} = 1$ if $\det(A_{S_i}) \in\mathbb{F}_{p^m}^+$. Let $\epsilon_A = (\epsilon_1, \epsilon_2, \dots, \epsilon_{2^n-1})$. Then $A \in \mathcal{X}_m(\epsilon_A)$. Moreover, if $\epsilon \in \{0,1\}^{2^n-1}$ is such that $A \in \mathcal{X}_m(\epsilon)$, then necessarily $\epsilon = \epsilon_A$. 

Therefore, the set $\PSD_5(n)$ over $\mathbb{F}_{p^m}$ can be written as the disjoint union
\begin{equation}\label{eq:psd 5 disjoint union}
\PSD_5(n)=\bigsqcup_{\epsilon \in \{0,1\}^{2^n-1}} \mathcal{X}_m(\epsilon).
\end{equation}

Fix $\epsilon \in \{0,1\}^{2^n-1}$. We now express $\mathcal{X}_m(\epsilon)$ as the projection of an algebraic set defined by a suitable collection of polynomials. Let $T \subseteq [2^n-1]$ denote the set of indices $j$ for which $\epsilon_j = 1$, and write $t = |T|$. Let $X = (x_{a,b})_{1 \leq a,b \leq n}$ be an $n \times n$ matrix of variables, and let $Y =\{y_i\mid i\in T\}$ be a set of additional variables. Consider the following system of polynomial equations in $\mathbb{F}_{p}[X,Y]$:

\begin{itemize}
\item For each $i \in T$,
\[g_i(X,Y)=y_i^2 \det\big(X_{S_i}\big) - 1;
\]

\item For each $i\in [2^n-1]\setminus T$ ,
\[g_i(X,Y) = \det\big({X_{S_i}}\big).
\]
\end{itemize}
and for each positive integer $k$, let $\mathcal{X}_m^0(\epsilon) \subseteq \mathbb{F}_{p^m}^{n^2+t}$ to be the algebraic set defined by the above system of polynomial equations.

The sets $\mathcal{X}_m(\epsilon)$ and $\mathcal{X}_m^0(\epsilon)$ satisfy the two properties given in the following lemmas.

\begin{lemma}\label{lem:part psd 5 1}
If $(X',Y') \in \mathcal{X}_m^0(\epsilon)$, then $X' \in \mathcal{X}_m(\epsilon)$. Conversely, if $X' \in \mathcal{X}_m(\epsilon)$, then there exists $Y'=(y_i)_{i \in T} \in \mathbb{F}_{p^m}^{t}$ such that $(X',Y') \in \mathcal{X}_m^0(\epsilon)$.
\end{lemma}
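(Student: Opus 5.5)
The plan is to verify the two directions directly from the defining equations, treating each index $i\in[2^n-1]$ separately according to whether $i\in T$ or not.

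\textbf{Forward direction.} Suppose $(X',Y')\in\mathcal{X}_m^0(\epsilon)$.
\begin{itemize}
\item For $i\notin T$, the equation $g_i=0$ says $\det(X'_{S_i})=0$, so $X'\in\mathcal{X}_{S_i,m}(0)$.
\item For $i\in T$, we have $y_i^2\det(X'_{S_i})=1$. Hence $\det(X'_{S_i})\neq 0$ and $y_i\neq 0$, and
\[
\det(X'_{S_i})=(y_i^{-1})^2 .
\]
This is the square of a nonzero element of $\mathbb{F}_{p^m}$, so it lies in $\mathbb{F}_{p^m}^{+}$, and $X'\in\mathcal{X}_{S_i,m}(1)$.
\end{itemize}
Intersecting over all $i$ gives $X'\in\mathcal{X}_m(\epsilon)$.

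\textbf{Converse.} Let $X'\in\mathcal{X}_m(\epsilon)$.
\begin{itemize}
\item For $i\notin T$, we have $\det(X'_{S_i})=0$, so $g_i(X',Y)=0$ holds regardless of $Y$.
\item For $i\in T$, we have $\det(X'_{S_i})\in\mathbb{F}_{p^m}^{+}$. So there is $b_i\in\mathbb{F}_{p^m}^{\times}$ with $\det(X'_{S_i})=b_i^2$. Set $y_i=b_i^{-1}$; then $y_i^2\det(X'_{S_i})=b_i^{-2}b_i^2=1$.
\end{itemize}
Putting $Y'=(y_i)_{i\in T}$ gives $(X',Y')\in\mathcal{X}_m^0(\epsilon)$.

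The only mild subtlety is that the equations have coefficients in $\mathbb{F}_p$ but are evaluated over $\mathbb{F}_{p^m}$. Positivity must therefore be taken relative to $\mathbb{F}_{p^m}$, and the square root $b_i$ must be chosen in $\mathbb{F}_{p^m}$, which is exactly what the definition of $\mathcal{X}_{S,m}(1)$ provides. No real obstacle is expected. One could add that each fibre of the projection has size $2^t$ (since $y_i=\pm b_i^{-1}$ and $p$ is odd), but the lemma as stated does not require this.
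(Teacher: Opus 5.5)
Your proposal is correct and follows essentially the same route as the paper: check the defining equations index by index, reading off $\det(X'_{S_i})=y_i^{-2}\in\mathbb{F}_{p^m}^{+}$ in one direction and choosing $y_i$ with $y_i^2=\det(X'_{S_i})^{-1}$ in the other. Your explicit choice $y_i=b_i^{-1}$ also spells out the existence of that square root, which the paper leaves implicit.
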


\begin{proof}
For the first part, observe that for each $i \in T$, the equation $g_i(X',Y') = 0$ implies that
\[
\det\big(X'_{S_{i}}\big) = y_i^{-2} \in \mathbb{F}_{p^m}^+.
\]
Further, for each $i \in [2^n-1] \setminus T$, we have 
\[
\det\big(X'_{S_{i}}\big) = 0
\] 
by definition of $\mathcal{X}_m^0(\epsilon)$. Therefore, $X'\in \mathcal{X}_m(\epsilon)$.

For the converse, suppose that $X' \in \mathcal{X}_m(\epsilon)$. Then for each $i \in T$, we have 
\[
\det\big(X'_{S_{i}}\big) \in \mathbb{F}_{p^m}^+.
\] 
Let $Y'=(y_i)_{i \in T}$ where $y_i \in \mathbb{F}_{p^m}$ satisfies the equation
\[y_i^2 = \big(\det\big(X'_{S_{i}}\big)\big)^{-1}.
\]
With this choice of $Y'$, we have $(X',Y')\in \mathcal{X}_m^0(\epsilon)$.
Thus $(X',Y') \in \mathcal{X}_m^0(\epsilon)$, completing the proof.
\end{proof}

For a given $X'\in \mathcal{X}_m(\epsilon)$, we now want to classify all $Y'\in \mathbb{F}_{p^m}^{t}$, such that $(X',Y')\in \mathcal{X}_m^0(\epsilon)$. Let $\sigma=(\sigma_i)_{i\in T}\in \{-1,1\}^{t}$. For $Y=(y_i)_{i\in T} \in \mathbb{F}_q^{t}$, define
\[
\sigma  Y := (\sigma_i y_i)_{i \in T} \in \mathbb{F}_q^{t}. 
\]

\begin{lemma} \label{lem:part psd 5 2}
If $(X',Y') \in \mathcal{X}_m^0(\epsilon)$, then $(X', \sigma Y') \in \mathcal{X}_m^0(\epsilon)$ for every $\sigma \in \{-1,1\}^{t}$. Conversely, if $(X',Y'), (X',Y_0') \in \mathcal{X}_m^0(\epsilon)$, then there exists $\sigma \in \{-1,1\}^{t}$ such that $Y' = \sigma Y_0'$.
\end{lemma}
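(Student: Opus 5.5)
The plan is a direct verification from the defining equations of $\mathcal{X}_m^0(\epsilon)$. Only the equations $g_i$ with $i\in T$ involve the $Y$ variables, and each such equation involves a single variable $y_i$, and only through $y_i^2$. So both directions reduce to one coordinate at a time.

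For the first statement, fix $(X',Y')\in\mathcal{X}_m^0(\epsilon)$ and $\sigma\in\{-1,1\}^t$. For $i\in[2^n-1]\setminus T$, the equation $g_i=\det(X'_{S_i})=0$ does not involve $Y$, so it still holds. For $i\in T$, we have $(\sigma_i y_i)^2=\sigma_i^2y_i^2=y_i^2$. Hence
\[
g_i(X',\sigma Y')=(\sigma_iy_i)^2\det(X'_{S_i})-1=g_i(X',Y')=0.
\]
Therefore $(X',\sigma Y')\in\mathcal{X}_m^0(\epsilon)$.

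For the converse, suppose $(X',Y'),(X',Y_0')\in\mathcal{X}_m^0(\epsilon)$, and write $Y'=(y_i)_{i\in T}$ and $Y_0'=(z_i)_{i\in T}$. For each $i\in T$, the equations $y_i^2\det(X'_{S_i})=1=z_i^2\det(X'_{S_i})$ first force $d_i:=\det(X'_{S_i})\neq0$, since otherwise the left side would be $0\ne 1$. Dividing by $d_i$ gives $y_i^2=z_i^2=d_i^{-1}$. Then
\[
(y_i-z_i)(y_i+z_i)=0
\]
holds in the field $\mathbb{F}_{p^m}$, which has no zero divisors. So $y_i=z_i$ or $y_i=-z_i$. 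Set $\sigma_i=1$ in the first case and $\sigma_i=-1$ otherwise. Since $p$ is odd, $1\ne-1$. Moreover, $y_i\neq 0$, so the two cases do not overlap, and $\sigma$ is well defined even though this is not needed for existence. By construction, $Y'=\sigma Y_0'$.

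There is no real obstacle. The only points needing care are that $\det(X'_{S_i})$ is invertible, which is read off from the equation $g_i=0$, and that the factorization of $y_i^2-z_i^2$ requires no zero divisors. The remark about odd characteristic shows that each fiber has exactly $2^t$ elements, which is presumably how the lemma is used afterward.
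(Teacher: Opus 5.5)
Your proposal is correct and is essentially the paper's own direct verification: $(\sigma_i y_i)^2 = y_i^2$ gives the first part, and equating $y_i^2\det(X'_{S_i}) = z_i^2\det(X'_{S_i}) = 1$ gives $y_i = \pm z_i$ for the converse. You also make explicit two points the paper leaves implicit. First, $\det(X'_{S_i})\neq 0$ is forced by $g_i=0$. Second, odd characteristic together with $y_i\neq 0$ makes the $2^t$ sign changes give distinct points, which is exactly what the later count $|\mathcal{X}_m^0(\epsilon)| = 2^t|\mathcal{X}_m(\epsilon)|$ relies on.
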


\begin{proof}
Suppose $(X',Y') \in \mathcal{X}_m^0(\epsilon)$. Let $\sigma=(\sigma_i)_{i\in T}\in \{-1,1\}^{t}$. Then for each $i\in T$,
\[ (\sigma_i y_i')^2 \det\big(X'_{S_{i}}\big)-1
= (y_i')^2 \det\big(X'_{S_{i}}\big)-1
= 0, 
\]
Hence $(X', \sigma  Y') \in \mathcal{X}_m^0(\epsilon)$.

For the converse, suppose $(X',Y'), (X',Y_0') \in \mathcal{X}_m^0(\epsilon)$. Then for each $i \in T$,
\[(y_i')^2 \det\big(X'_{S_{i}}\big)-1= (y_{0,i}')^2 \det\big(X'_{S_{i}}\big) -1.
\]
Thus,
\[(y_i')^2 = (y_{0,i}')^2,
\]
which implies that $y_i'= \pm y_{0,i}'$. Therefore, there exists $\sigma_i \in \{-1,1\}$ such that $y_i'= \sigma_i y_{0,i}'$ for each $i\in T$. Let $\sigma = (\sigma_i)_{i \in T}$. Then $Y' = \sigma Y_0'$, completing the proof.
\end{proof} 

What we have proved so far is that $\PSD_5(n)$ over $\mathbb{F}_{p^m}$ can be expressed as a disjoint union of subsets, each of which can be realized as the projection of an algebraic set. Thus, for $f_1,f_2,\dots,f_k \in \mathbb{F}_p[x_1,x_2,\dots,x_n]$, if we can say something about $N_m(f_1,f_2,\dots,f_k)$, that is, the number of points in the corresponding algebraic set, then we can obtain information about $\mathcal{X}_m(\epsilon)$ for each $\epsilon \in \{0,1\}^{2^n-1}$. We now introduce the Weil zeta function to tackle this problem.

\begin{definition}
Let $\{f_1,f_2,\dots,f_k\} \subseteq \mathbb{F}_q[x_1,x_2,\dots,x_n]$ be system of polynomial equation. The \textit{Weil zeta function} $\zeta_{f_1,f_2,\dots,f_k}^q(z)$ is defined by
\[
\zeta_{f_1,f_2,\dots,f_k}^q(z)
=
\exp\left(
\sum_{m\geq 1}
N_m(f_1,f_2,\dots,f_k)\frac{z^m}{m}
\right).
\]
\end{definition}

In 1949, Weil conjectured four properties of the Weil zeta function: rationality, a functional equation (Poincaré duality), an analogue of the Riemann Hypothesis, and a connection between Betti numbers and topology. All of these properties are now known to be true.
We will use only the rationality of the Weil zeta function due to Dwork.

\begin{theorem}[{\cite{Dwork1960}}]
\label{thm: dwork thm}
    The Weil zeta function is a rational function over $\mathbb{Q}$, where the constant term of both the numerator and denominator is 1.
\end{theorem}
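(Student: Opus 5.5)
Since this is Dwork's rationality theorem, quoted in the paper without proof, the plan below follows Dwork's $p$-adic strategy. It reduces first to a single hypersurface, then to points in a torus. The zeta function is then written as a product of $p$-adic Fredholm determinants, and rationality follows from a Borel-type criterion.

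\textbf{Step 1: reduction to one hypersurface.} Note that $\zeta$ is multiplicative in the counting sequence: if $N_m=\sum_j c_j N^{(j)}_m$ with $c_j\in\mathbb{Z}$, then $\zeta=\prod_j (\zeta^{(j)})^{c_j}$. So it suffices to express $N_m(f_1,\dots,f_k)$ as an integer combination of sequences whose zeta functions are rational. The common zero set is the complement of $\bigcup_i\{f_i\neq 0\}$. The points where $f_i\neq 0$ for all $i\in S$ number $q^{mn}-N_m\bigl(\prod_{i\in S}f_i\bigr)$. By inclusion--exclusion, $N_m(f_1,\dots,f_k)$ is an integer combination of $q^{mn}$ and the counts $N_m(\prod_{i\in S}f_i)$. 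Since $\exp(\sum q^{mn}z^m/m)=(1-q^nz)^{-1}$, it suffices to treat a single polynomial $f$.

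\textbf{Step 2: reduction to the torus.} Partition $\mathbb{F}_{q^m}^n$ according to which coordinates vanish. Each piece is a torus $(\mathbb{F}_{q^m}^\times)^{d}$ on which $f$ restricts to a polynomial in $d$ variables. So it suffices to prove rationality for $N_m^*(f)=\#\{x\in(\mathbb{F}_{q^m}^{\times})^n: f(x)=0\}$. Introduce an extra variable $x_0$ and a nontrivial additive character $\psi$. Then
\[
q^m N_m^*(f)=\sum_{x_0\in\mathbb{F}_{q^m}}\;\sum_{x\in(\mathbb{F}_{q^m}^\times)^n}\psi\bigl(\mathrm{Tr}(x_0f(x))\bigr).
\]
Separating $x_0=0$ reduces the problem to exponential sums $S_m$ over the torus of dimension $n+1$ for the polynomial $g=x_0f$. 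Here a ``Tate twist'' by $q^m$ only rescales $z$.

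\textbf{Step 3: Dwork's analytic part.} This is the step I expect to be the main obstacle. Work over $\mathbb{C}_p$ with Dwork's splitting function $\theta(t)=\exp(\pi t-\pi t^p)$, where $\pi^{p-1}=-p$. Then $\psi(\mathrm{Tr}(\cdot))$ at a Teichmüller lift $\hat x$ equals $\prod_{i} F(\hat x^{q^i})$ for the product $F$ of $\theta$ over the monomials of $g$. The function $F$ is a power series converging on a disk of radius $>1$. On a Banach space of power series with a suitable growth condition, define $\Psi=\psi_q\circ F$, where $\psi_q$ keeps the monomials with exponents divisible by $q$ and divides those exponents by $q$. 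One proves that $\Psi$ is completely continuous, and establishes the trace formula
\[
(q^m-1)^{n+1}\,\mathrm{Tr}(\Psi^m)=\sum_{x^{q^m-1}=1}F(x)F(x^q)\cdots F(x^{q^{m-1}}).
\]
Expanding $(q^m-1)^{n+1}$ binomially writes $\exp(\sum_m S_m z^m/m)$ as an alternating product of Fredholm determinants $\det(1-q^iz\Psi)^{\pm1}$. These are $p$-adically entire because the matrix coefficients of $\Psi$ decay geometrically. Hence $\zeta$ is $p$-adically meromorphic on all of $\mathbb{C}_p$. The delicate points are the growth estimates on $\theta$ and the proof of the trace formula.

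\textbf{Step 4: rationality.} The zeta function is a power series with integer coefficients and constant term $1$. The trivial bound $N_m\le q^{mn}$ makes it complex-analytic on $|z|<q^{-n}$. Apply the Borel--Dwork criterion. If an integer power series is meromorphic on a $p$-adic disk of radius $R_p$ and analytic on a complex disk of radius $R_\infty$ with $R_pR_\infty>1$, then it is rational. Here $R_p=\infty$, so the criterion applies. The proof of the criterion shows that the Hankel determinants of the coefficients are integers whose product estimates force them to be $<1$ in absolute value, hence zero, for large index. Finally, $\zeta(0)=1$ and integrality of the coefficients, together with Fatou's lemma, let us normalize the numerator and denominator to lie in $\mathbb{Z}[z]$ with constant term $1$.
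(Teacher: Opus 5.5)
The paper gives no proof of this statement and only cites Dwork's 1960 paper. Your outline is a correct sketch of exactly that cited argument: inclusion--exclusion down to one hypersurface, reduction to the torus, the additive-character trick, then the splitting function $\theta$ with Dwork's trace formula giving $p$-adic meromorphy through Fredholm determinants, and finally the Borel--Dwork criterion together with Fatou's lemma. The one inaccuracy is in Step 3 when $q=p^a$ with $a>1$: since $\theta$ only splits the trace down to $\mathbb{F}_p$, writing $g=\sum_u a_u x^u$ you must take $F(x)=\prod_{j=0}^{a-1}\prod_u\theta\bigl(\hat a_u^{p^j}x^{p^j u}\bigr)$, including the Frobenius conjugates, rather than $\prod_u\theta(\hat a_u x^u)$, and only then does $\psi(\mathrm{Tr}\,g(\bar x))=\prod_{i=0}^{m-1}F(\hat x^{q^i})$ hold.
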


\begin{lemma}\label{lem:rational funcn}
Every rational function $f(z)=P(z)/Q(z) \in \mathbb{Q}(z)$ with $P(0), Q(0) \neq 0$ can be written in the form
\[
f(z)=\frac{P(0)}{Q(0)} \exp\left(\sum_{m \geq 1} \left( \sum_{j=1}^{b} \beta_j^m - \sum_{i=1}^{a} \alpha_i^m \right)\frac{z^m}{m} \right),
\]
where $\alpha_1, \dots, \alpha_a$ are the reciprocals of the roots of $P(z)$ and $\beta_1, \dots, \beta_b$ are the reciprocals of the roots of $Q(z)$. In particular, for $f_1, f_2, \dots, f_k \in \mathbb{F}_q[x_1, x_2, \dots, x_n]$, there exists algebraic numbers $\gamma_1, \dots, \gamma_{n_1}$ and $\delta_1, \dots, \delta_{n_2}$ such that
    \[N_m(f_1, f_2, \dots, f_k) = \sum_{j=1}^{n_2} \delta_j^m - \sum_{i=1}^{n_1} \gamma_i^m.
\]
\end{lemma}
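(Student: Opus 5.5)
The plan is to factor $P$ and $Q$ over $\mathbb{C}$ and take logarithms term by term. Write $P(z)=P(0)\prod_{i=1}^{a}(1-\alpha_i z)$ and $Q(z)=Q(0)\prod_{j=1}^{b}(1-\beta_j z)$. This is possible because $P(0),Q(0)\neq 0$: every root of $P$ is nonzero, and if $\rho_i$ are its roots (with multiplicity), then $P(z)=c\prod(z-\rho_i)=c\prod(-\rho_i)\prod(1-z/\rho_i)$. Evaluating at $z=0$ identifies the constant $c\prod(-\rho_i)$ with $P(0)$, and $\alpha_i=\rho_i^{-1}$. The same argument applies to $Q$.

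Next, use the formal power series identity
\[
1-\alpha z=\exp\left(-\sum_{m\geq 1}\alpha^m\frac{z^m}{m}\right)
\]
in $\mathbb{C}[[z]]$, together with the fact that $\exp$ turns sums into products in $\mathbb{C}[[z]]$. Substituting these into the factorization gives
\[
\frac{P(z)}{Q(z)}=\frac{P(0)}{Q(0)}\exp\left(\sum_{m\geq1}\Big(\sum_j\beta_j^m-\sum_i\alpha_i^m\Big)\frac{z^m}{m}\right).
\]
All manipulations take place in the ring of formal power series, since $1/Q(z)$ is a well-defined power series when $Q(0)\neq0$. This settles the first assertion.

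For the second assertion, apply \cref{thm: dwork thm} to $\zeta^q_{f_1,\dots,f_k}(z)$. This gives a rational function over $\mathbb{Q}$ with $P(0)=Q(0)=1$. Equating the two exponential expressions and taking the formal logarithm (which is injective on series with constant term $1$) lets us compare the coefficients of $z^m/m$. The result is $N_m=\sum_j\beta_j^m-\sum_i\alpha_i^m$. Setting $\delta_j=\beta_j$ and $\gamma_i=\alpha_i$, these are algebraic numbers because they are reciprocals of roots of polynomials with rational coefficients.

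I expect no real obstacle here. The only step that needs care is justifying the formal $\exp$/$\log$ manipulations: that $\exp$ and $\log$ are mutually inverse bijections between $z\mathbb{C}[[z]]$ and $1+z\mathbb{C}[[z]]$. That bijectivity is what makes the coefficient comparison legitimate.
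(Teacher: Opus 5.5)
Your proposal is correct and follows essentially the same route as the paper. Both write $P(z)=P(0)\prod_i(1-\alpha_i z)$ and $Q(z)=Q(0)\prod_j(1-\beta_j z)$, expand $\log(1-\alpha z)=-\sum_{m\ge1}\alpha^m z^m/m$, and then combine with Dwork's theorem and compare coefficients of $z^m/m$. Your remark that $\exp\colon z\mathbb{C}[[z]]\to 1+z\mathbb{C}[[z]]$ is a bijection makes explicit the justification the paper leaves implicit in ``comparing the coefficients.''
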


\begin{proof}
    We can write $P(z)=P(0)\prod_{i=1}^{a}(1-\alpha_iT)$ and $Q(z)=Q(0)\prod_{i=1}^{b}(1-\beta_iT)$. This gives 
\[
\begin{aligned}
f(z) &=\frac{P(0)}{Q(0)}\frac{\prod_{i=1}^{a}(1-\alpha_iz)}{\prod_{i=1}^{b}(1-\beta_iz)}\\
&=\frac{P(0)}{Q(0)}\exp{\left(\sum_{i=1}^{a}\log(1-\alpha_iz)-\sum_{j=1}^{b}\log(1-\beta_jz)  \right)}\\  
&=\frac{P(0)}{Q(0)}\exp{\left(\sum_{m\geq 1}\left(\sum_{j=1}^{b}\beta_j^m -\sum_{i=1}^{a}\alpha_i^m \right)\frac{z^m}{m}\right)}
\end{aligned}
\]
This proves the first part of the lemma. 

For the second part, we use \cref{thm: dwork thm} together with the first part of the lemma. Comparing the coefficients, we obtain the desired form.
\end{proof}

We are now ready to state the structural formula for the number of matrices in $\PSD_5(n)$.

\begin{theorem}\label{thm:psd 5 struc formula}
Let $p$ be an odd prime and let $n$ be a positive integer. Then there exists a positive integer $r$, integers $c_{1,p}, c_{2,p}, \dots, c_{r,p}$, and algebraic numbers $\gamma_{1,p}, \gamma_{2,p}, \dots, \gamma_{r,p}$ such that the number of matrices in $\PSD_5(n)$ over $\mathbb{F}_{p^m}$ is given by

\[2^{-(2^n-1)}\left(c_{1,p} \gamma_{1,p}^{m} + c_{2,p} \gamma_{2,p}^{m} + \cdots + c_{r,p} \gamma_{r,p}^{m}\right).
\]
\end{theorem}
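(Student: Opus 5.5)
We count $\PSD_5(n)$ over $\mathbb{F}_{p^m}$ using the disjoint union \eqref{eq:psd 5 disjoint union}, $\psd_5(n)=\sum_{\epsilon\in\{0,1\}^{2^n-1}}|\mathcal{X}_m(\epsilon)|$, and treat each piece separately. For fixed $\epsilon$ with $t=|T|$, we compare $|\mathcal{X}_m(\epsilon)|$ with the number of points $N_m(\epsilon)$ of the algebraic set $\mathcal{X}^0_m(\epsilon)$ cut out by the polynomials $g_i$, which have coefficients in $\mathbb{F}_p$. The set $\mathcal{X}_m(\epsilon)$ is also required to consist of symmetric matrices. We encode this by adding the linear polynomials $x_{a,b}-x_{b,a}$ to the system. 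This does not affect the two lemmas.

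By \cref{lem:part psd 5 1}, the projection $(X',Y')\mapsto X'$ maps $\mathcal{X}^0_m(\epsilon)$ onto $\mathcal{X}_m(\epsilon)$. By \cref{lem:part psd 5 2}, each fibre is an orbit $\{(X',\sigma Y')\}$ of $\{\pm1\}^t$. Every coordinate $y_i$ with $i\in T$ is nonzero, since $y_i^2\det(X'_{S_i})=1$. Because $p$ is odd, $y_i\neq -y_i$, so the action is free and each fibre has exactly $2^t$ points. Hence
\[
|\mathcal{X}_m(\epsilon)|=2^{-t}N_m(\epsilon)=2^{-(2^n-1)}\,2^{\,2^n-1-t}N_m(\epsilon).
\]

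Next we apply \cref{thm: dwork thm} and \cref{lem:rational funcn} to the system defining $\mathcal{X}^0_m(\epsilon)$, regarded over the base field $\mathbb{F}_p$. This yields algebraic numbers $\delta_j,\gamma_i$, independent of $m$, with
\[
N_m(\epsilon)=\sum_j\delta_j^m-\sum_i\gamma_i^m.
\]
Multiplying by the integer $2^{2^n-1-t}$ and summing over all $\epsilon$ gives
\[
\psd_5(n)=2^{-(2^n-1)}\sum_k c'_k\,\eta_k^m,
\]
a finite sum with integer coefficients $c'_k$. Collecting equal $\eta_k$ and adding their coefficients puts this in the stated form, with $r$, the $c_{k,p}$ and the $\gamma_{k,p}$ depending only on $p$ and $n$.

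The delicate step is the fibre count, which relies on each $y_i$ being nonzero and on the characteristic being odd. The other point to check is that Dwork's theorem is applied to the zeta function over $\mathbb{F}_p$, so that $N_m$ counts points over $\mathbb{F}_{p^m}$ exactly as in the definition. The rest is bookkeeping.
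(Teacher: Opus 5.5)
Your proposal is correct and follows the paper's proof step for step. That means the disjoint union over $\epsilon$, the lifted algebraic set $\mathcal{X}^0_m(\epsilon)$ whose fibres over $\mathcal{X}_m(\epsilon)$ have size $2^t$, Dwork's rationality via \cref{lem:rational funcn}, and summation over $\epsilon$ after putting everything over the common factor $2^{-(2^n-1)}$. Your two additions make explicit points the paper leaves implicit: imposing symmetry through the linear equations $x_{a,b}-x_{b,a}$ (the paper's $\mathcal{X}_m(\epsilon)$ is defined over all of $M_n(\mathbb{F}_{p^m})$), and checking that the sign action is free because each $y_i\neq 0$ and $p$ is odd.
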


\begin{proof} 
From \eqref{eq:psd 5 disjoint union}, it follows that the number of matrices in $\PSD_5(n)$ over $\mathbb{F}_{p^m}$ is given by
\begin{equation}\label{eq:psd 5 xm count}
\psd_5(n)=\sum_{\epsilon \in \{0,1\}^{2^n-1}} 
\big|\mathcal{X}_m(\epsilon)\big|.
\end{equation}
Therefore, for each $\epsilon \in \{0,1\}^{2^n-1}$, we aim to compute $\big|\mathcal{X}_m(\epsilon)\big|$. 

Fix $\epsilon \in \{0,1\}^{2^n-1}$. We define an equivalence relation $\sim$ on $\mathcal{X}_m^0(\epsilon)$ where we define $(X',Y') \sim (X_0',Y_0')$ if $X' = X_0'$. By \cref{lem:part psd 5 1}, each equivalence class corresponds to a unique element of $\mathcal{X}_m(\epsilon)$, and by \cref{lem:part psd 5 2}, each equivalence class has cardinality $2^t$. Hence,
\begin{equation}\label{eq:psd 5 xm x0m rel}
|\mathcal{X}_m^0(\epsilon)| = 2^{t}\, \big|\mathcal{X}_m(\epsilon)\big|.
\end{equation}
Applying \cref{lem:rational funcn} to the polynomials defining the algebraic set $\mathcal{X}_m^0(\epsilon)$, namely
\begin{itemize}
\item for each $i \in T$,
\[g_i(X,Y) = y_i^2 \det\big(X_{S_{i}}\big) - 1,
\]
\item for each $i \in [2^n-1] \setminus T$,
\[g_i(X,Y) = \det\big(X_{S_{i}}\big),
\]
\end{itemize}
we obtain algebraic numbers $\alpha_1, \dots, \alpha_{n_1}$ and $\beta_1, \dots, \beta_{n_2}$ such that
\[
|\mathcal{X}_m^0(\epsilon)| = \sum_{j=1}^{n_2} \beta_j^m - \sum_{i=1}^{n_1} \alpha_i^m.
\]
Substituting into \eqref{eq:psd 5 xm x0m rel}, we obtain
\[
\big| \mathcal{X}_m(\epsilon) \big|=2^{-t}
\left(\sum_{j=1}^{n_2} \beta_j^m-\sum_{i=1}^{n_1} \alpha_i^m\right).
\]

Since this holds for every $\epsilon \in \{0,1\}^{2^n-1}$, the result follows from \eqref{eq:psd 5 xm count}, completing the proof of the theorem.
\end{proof}

\begin{remark}\label{rem:when psd 5 polyn}
Suppose $\gamma_{1,p}, \gamma_{2,p}, \dots, \gamma_{r,p}$ in \cref{thm:psd 5 struc formula} are of the form $\gamma_{i,p}=\zeta_i p^{\ell_i}$,
where $\zeta_i$'s are some roots of unity, and $\ell_i\in\mathbb{Z}$ for all $i\in[t]$. Let $t$ be the lcm of the orders of $\zeta_1,\dots,\zeta_r$. 
Then, for each $b \in [0,t-1]$, the number of matrices in $\PSD_5(n)$ over $\mathbb{F}_{p^{at+b}}$ is a polynomial in $p^a$.
\end{remark}

\cref{rem:when psd 5 polyn} and \cref{prop: psd 5 in M2(Fq)} motivate us to conjecture the following. 

\begin{conjecture}
For a prime $p$ and positive integer $n$, there exists a positive integer $t$ and rational polynomials $f_0(x),\dots,f_{t-1}(x)\in \mathbb{Q}[x]$ of degree $\binom{n+1}{2}$, such that the number of matrices in $\PSD_5(n)$ over $\mathbb{F}_{p^m}$ is $f_i(p^k)$ when $k\equiv i\pmod{t}$.
\end{conjecture}

\subsection{Relating these definitions} 
\label{sec:rel psd}
We now examine how two these criteria are related. 
The Venn diagrams in \cref{fig:psd-venn} summarize the inclusion relations between the sets of positive semidefinite matrices of various types.
We justify these figures below.
We should clarify that we have not verified all the inclusions in these figures, and these should be interpreted as heuristic pictures, rather than as completely accurate. For example, in \cref{fig:psd-venn}(A), we do not know if there are matrices which are both $\PSD_1$ and $\PSD_5$ but neither $\PSD_2$ nor $\PSD_3$.

\begin{figure}[htbp]
\centering

\begin{subfigure}{0.45\textwidth}
\centering
\begin{tikzpicture}[scale=0.75]

\draw[thick] (0,0) circle (3.4);
\node at (0,-2.9) {$\PSD_{3}$};

\draw[thick] (-0.8,0.7) circle (1.65);
\node at (-1.5,1.3) {$\PSD_{1}$};

\draw[thick] (0.8,0.7) circle (1.65);
\node at (1.5,1.3) {$\PSD_{2}$};

\draw[thick] (-0.8,-0.7) circle (1.65);
\node at (-1.5,-1.3) {$\PSD_{4}$};

\draw[thick] (0.8,-0.7) circle (1.65);
\node at (1.5,-1.3) {$\PSD_{5}$};

\end{tikzpicture}
\caption{$q\equiv1\pmod4$, $q\ge5$ and $n\ge 3$}
\end{subfigure}
\hfill
\begin{subfigure}{0.45\textwidth}
\centering
\begin{tikzpicture}[scale=0.75]

\draw[thick] (0,0) circle (3.5);
\node at (-2.1,-1.7) {$\PSD_{3}$};

\draw[thick] (0,0.7) circle (1);
\node at (0,0.9) {$\PSD_{5}$};

\draw[thick] (0,0.7) circle (1.75);
\node at (0,1.95) {$\PSD_{1}$};

\draw[thick] (0,0.7) circle (2.5);
\node at (0,2.75) {$\PSD_{2}$};

\draw[thick] (0,-1.2) ellipse (1.3 and 1.8);
\node at (0,-2.3) {$\PSD_{4}$};

\end{tikzpicture}
\caption{$q\equiv3\pmod4$, $q\ge7$ and $n\ge 5$}
\end{subfigure}

\vspace{1em}

\begin{subfigure}{0.45\textwidth}
\centering
\begin{tikzpicture}[scale=0.75]

\draw[thick] (0,0) ellipse (4 and 3.5);
\node at (0,-3.1) {$\PSD_{5}$};

\draw[thick] (0,0) ellipse (3.4 and 2.7);
\node at (2.3,-0.7) {$\PSD_{3}$};

\draw[thick] (0,0.8) circle (0.9);
\node at (0,1) {$\PSD_{1}$};

\draw[thick] (0,0.7) circle (1.8);
\node at (0,2) {$\PSD_{2}$};

\draw[thick] (0,-0.8) circle (1.4);
\node at (0,-1.55) {$\PSD_{4}$};

\end{tikzpicture}
\caption{$q$ even, $q\ge8$ and $n\ge 3$}
\end{subfigure}

\caption{Relations between the sets of positive semidefinite matrices of various types over $\mathbb{F}_q$. }
\label{fig:psd-venn}

\end{figure}

\subsubsection{$\PSD_{1}(n)$ and $\PSD_{2}(n)$} 
From \cref{sec:posdef 2}, we know that $\PD_2(n)\subseteq \PD_1(n)$, where equality holds if and only if the field is definite. However, this is not true in the positive semidefinite case. In \cref{thm:psd 1 psd 2 rel}, we proved that $\PSD_1(n)\subseteq \PSD_2(n)$ when the field is definite. We now give an example of a matrix in $\PSD_2(n)\setminus \PSD_1(n)$ over an arbitrary finite field.

First, consider the case when $q$ is even. Consider the matrix 
\[
B_1=\left(
\begin{array}{c|c}
\begin{matrix}
0 & 0 & 0 \\
0 & 0 & 1 \\
0 & 1 & 0
\end{matrix}
&
\textbf{0}_{3\times (n-3)} \\
\hline
\textbf{0}_{(n-3)\times 3} & \iden_{n-3}
\end{array}
\right)=\left(
\begin{array}{c|c}
\begin{matrix}
0 & 0 & 0 \\
1 & 1 & 0 \\
0 & 1 & 1
\end{matrix}
&
\textbf{0}_{3\times (n-3)} \\
\hline
\textbf{0}_{(n-3)\times 3} & \iden_{n-3}
\end{array}
\right)\left(
\begin{array}{c|c}
\begin{matrix}
0 & 1 & 0 \\
0 & 1 & 1 \\
0 & 0 & 1
\end{matrix}
&
\textbf{0}_{3\times (n-3)} \\
\hline
\textbf{0}_{(n-3)\times 3} & \iden_{n-3}
\end{array}
\right)
\]
Observe that $B_1 \in \PSD_2(n)\setminus \PSD_1(n)$ because $(B_1)_{S}$ is not $PD_1$ for any $S \subseteq [3]$ and $B_1$ is not a zero matrix.
    
Now consider the case when $q$ is odd. It is easy to observe that we can choose $a\in \mathbb{F}_q^{+}$ such that $a+1\in \mathbb{F}_q^{-}$. Let $b^2=a$. Consider the matrix
\[
B_2=\left(
\begin{array}{c|c}
\begin{matrix}
0 & 0 \\
0 & a+1 
\end{matrix}
&
\textbf{0} \\
\hline
\textbf{0} & \iden_{n-2}
\end{array}
\right)=\left(
\begin{array}{c|c}
\begin{matrix}
0 & 0 \\
b & 1 
\end{matrix}
&
\textbf{0} \\
\hline
\textbf{0} & \iden_{n-2}
\end{array}
\right)\left(
\begin{array}{c|c}
\begin{matrix}
0 & b \\
0 & 1 
\end{matrix}
&
\textbf{0} \\
\hline
\textbf{0} & \iden_{n-2}
\end{array}
\right).
\] 
Again observe that $B_2 \in \PSD_2(n)\setminus \PSD_1(n)$. 
In \cref{sec:PD_1_PD_2_rel}, we saw examples of matrices in $\PD_1(n)\setminus \PD_2(n)$ when the field is indefinite, and therefore we obtain matrices in $\PSD_1(n)\setminus \PSD_2(n)$.

\subsubsection{$\PSD_{3}(n)$ with $\PSD_{1}(n)$ and $\PSD_{2}(n)$}
From \cref{sec:PD_3_PD_1 2 4_rel}, we have the inclusion relation $\PD_1(n),\,\PD_2(n) \subsetneq \PD_3(n)$ and example of matrices in $\PD_{3}(n)\setminus \PD_{1}(n)$.

First, suppose $q$ is even. Every nonzero matrix in $\PSD_1(n)$ has some principal submatrix which is $\PD_1$ and is therefore $\PD_3$. As a consequence of \cref{lem:posdef 3 q even}, it will have some positive diagonal entry, implying that it belongs to $\PSD_3(n)$. Therefore $\PSD_1(n)\subsetneq \PSD_3(n)$.

Now suppose $q$ is odd. Consider any matrix in $\PSD_1(n)$. If it has full rank, then it is in $\PD_1(n)\subsetneq \PD_3(n)\subsetneq \PSD_3(n)$. On the other hand, if it is not of full rank, then it is in $\PSD_3(n)$ by \cref{thm:psd 3 count}. Therefore, $\PSD_1(n)\subseteq \PSD_3(n)$.

\subsubsection{$\PSD_{4}(n)$ with $\PSD_{1}(n), \PSD_{2}(n)$ and $\PSD_3(n)$} 
The examples in \cref{sec:PD_4_PD_1 2_rel} show that there is no inclusion relation between $\PD_4(n)$ and either $\PD_1(n)$ or $\PD_2(n)$. Consequently, there is no inclusion relation between $\PSD_4(n)$ and either $\PSD_1(n)$ or $\PSD_2(n)$.

It is straightforward to verify that $\PSD_4(n)\subseteq \PSD_3(n)$. Furthermore, the example in \cref{sec:rel posdef} demonstrates that the inclusion is strict.

\subsubsection{$\PSD_{5}(n)$ with $\PSD_{1}(n), \PSD_{2}(n)$ and $\PSD_3(n)$} 
When $q$ is even, \cref{rem:psd 5 even} shows that $\PSD_3(n)\subsetneq \PSD_5(n)$. When $q$ is odd, \cref{lem:posdef 3 q odd} and \cref{thm:psd 3 count} shows that $\PSD_5(n)\subseteq \PSD_3(n)$. 

In \cref{prop:psd 1 psd 5 rel}, we showed that when
$q\equiv3\pmod4$, we have $\PSD_5(n)\subseteq\PSD_1(n)$, and consequently
$\PSD_5(n)\subseteq\PSD_2(n)$. We next show that these inclusions are strict. Furthermore, when $q\equiv1\pmod4$ is odd, we show that there is no
inclusion relation between $\PSD_5(n)$ and $\PSD_i(n)$ for any
$i\in\{1,2\}$.

Choose $a_1\in \mathbb{F}_q^{+}$ such that $a_1+1\in \mathbb{F}_q^{-}$. Let $b_1^2=a_1$. Consider the matrix
\[
B_3=\left(
\begin{array}{c|c}
\begin{matrix}
1 & 0 \\
b_1 & 1 
\end{matrix}
&
\textbf{0} \\
\hline
\textbf{0} & \iden_{n-2}
\end{array}
\right)
\left(
\begin{array}{c|c}
\begin{matrix}
1 & b_1 \\
0 & 1 
\end{matrix}
&
\textbf{0} \\
\hline
\textbf{0} & \iden_{n-2}
\end{array}
\right)=\left(
\begin{array}{c|c}
\begin{matrix}
1 & b_1 \\
b_1 & a_1+1 
\end{matrix}
&
\textbf{0} \\
\hline
\textbf{0} & \iden_{n-2}
\end{array}
\right).
\]
By construction, $B_3\in \PD_2(n)\subseteq \PD_1(n)$ and so, $B_3\in \PSD_1(n)\cap \PSD_2(n)$. Moreover, since $a_1+1\in \mathbb{F}_q^{-}$, we have $B_3 \notin \PSD_5(n)$. Hence, $A_1\in (\PSD_1(n) \cap \PSD_2(n)) \setminus \PSD_5(n)$. 

First, consider $q\equiv 1\pmod{4}$. Let $a_2 \in \mathbb{F}_q^{+}$. Consider the matrix 
\[
B_4=\left(\begin{array}{c|c}
\begin{matrix}
a_2 & -a_2 & 0 \\
-a_2 & a_2 & a_2 \\
0 & a_2 & a_2
\end{matrix}
&
\textbf{0} \\
\hline
\textbf{0} & \iden_{n-3}
\end{array}
\right).
\]
Note that $B_4$ is not a $\PD_1$ matrix (and hence not a $\PD_2$ matrix), as $\det((B_4)_{\{1,2\}})=0$. 
However, $\det B_4 = -a_2^3 \in \mathbb{F}_q^+$,
and hence it is of full rank. 
It is routine to check that all minors of $B_4$ are nonnegative, and hence
$B_4\in\PSD_5(n)$.

\subsubsection{$\PSD_{5}(n)$ and $\PSD_4(n)$} 
Suppose $q$ is even. Since $\PSD_4(n)\subsetneq \PSD_3(n) \subseteq \PSD_5(n)$, we obtain $\PSD_4(n)\subsetneq \PSD_5(n)$.

Now suppose $q$ is odd.
We will now show that there is no inclusion relation between $\PSD_5(n)$ and $\PSD_4(n)$. 
By \cref{thm:x^2+y^2=1}, for $q \geq 7$, we can choose $(a_2,b_2) \in \mathbb{F}_q^2$ such that $a_2^2 + b_2^2 = 1$ and $(a_2,b_2) \notin \{(\pm 1,0), (0,\pm 1)\}$. 
Choose $c_2\in \mathbb{F}_q^{+}$ such that $c_2+1\in\mathbb{F}_q^{-}$. Consider the matrix
\[
\begin{aligned} 
B_5 &=\left(\begin{array}{c|c}
\begin{matrix}
a_2 & b_2  \\
b_2 & -a_2 
\end{matrix} & \textbf{0} \\
\hline
\textbf{0} & \iden_{n-2}
\end{array}\right) 
\left(\begin{array}{c|c}
\begin{matrix}
1 & 0  \\
0 & c_2a_2^2b_2^{-2} 
\end{matrix} & \textbf{0} \\
\hline
\textbf{0} & \iden_{n-2}
\end{array}\right)
\left(\begin{array}{c|c}
\begin{matrix}
a_2 & b_2  \\
b_2 & -a_2 
\end{matrix} & \textbf{0} \\
\hline
\textbf{0} & \iden_{n-2}
\end{array}\right)\\
&=
\left(
\begin{array}{c|c}
\begin{matrix}
a_2^2(c_2+1) & a_2b_2(1-c_2a_2^2b_2^{-2})  \\
a_2b_2(1-c_2a_2^2b_2^{-2}) & c_2a_2^4b_2^{-2}+b_2^2 
\end{matrix}
&
\textbf{0} \\
\hline
\textbf{0} & \iden_{n-2}
\end{array}
\right).
\end{aligned}
\]
Observe that $B_5\in \PSD_4(n)\setminus \PSD_5(n)$, since $a_2^2(c_2+1)$ is negative. 
By \cref{cor:+-}, there exists $a_3\in \mathbb{F}_q$ such that $a_3^2+1\in \mathbb{F}_q^-$. Consider the matrix
\[ 
B_6 =\left(\begin{array}{c|c}
\begin{matrix}
1 & a_3 \\
a_3 & a_3^2 
\end{matrix} & \textbf{0} \\
\hline
\textbf{0} & \iden_{n-2}
\end{array}\right). 
\]
It is easy to check that $B_6 \in \PSD_5(n)$. The characteristic polynomial of $B_6$ is $(x-1)^{n-2}x(x-(1+a_3^2))$. Since $B_6$ always has a negative eigenvalue, it follows that $B_6 \notin \PSD_4(n)$.

\section{Totally positive matrices}
\label{sec:TP}

There is another important family of matrices over the real field, namely totally positive matrices. 

\begin{definition}
\label{def:tp}
A matrix $A\in M_{m,n}(\mathbb{F}_q)$ is said to be \textit{totally positive} if all of its minors are positive; that is $\operatorname{det}(A_{I,J})$ is positive for all $I\subseteq [m]$, $J\subseteq [n]$ of same size. We denote by $\mathrm{TP}_{m,n}(\mathbb{F}_q)$ the set of all totally positive matrices in $M_{m,n}(\mathbb{F}_q)$.
\end{definition}

\subsection{Enumeration}
\label{sec:totpos enum}

In this section, we study totally positive matrices over finite fields. Our goal is to count the number of totally positive matrices in $M_{m,n}(\mathbb{F}_q)$. The next proposition is the first result in this direction.

\begin{proposition}\label{prop:tp count form}
The number of totally positive in $\mathrm{TP}_{m,n}(\mathbb{F}_q)$ is
\[
|\mathrm{TP}_q(m,n)|= r\cdot|\mathbb{F}_q^{+}|^{m+n-1},
\]
where $r$ is the number of totally positive matrices in $\mathrm{TP}_{m,n}(\mathbb{F}_q)$ with all the entries in first row and first column equal to 1. 
\end{proposition}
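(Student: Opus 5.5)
The plan is to exhibit a group action that preserves total positivity and acts freely, such that each orbit contains exactly one matrix whose first row and first column are all equal to $1$. Let $G=(\mathbb{F}_q^{+})^m\times(\mathbb{F}_q^{+})^n$, a group under coordinatewise multiplication. It acts on $M_{m,n}(\mathbb{F}_q)$ by
\[
(D_1,D_2)\cdot A = D_1 A D_2,
\]
where $D_1=\operatorname{diag}(d_1,\dots,d_m)$ and $D_2=\operatorname{diag}(e_1,\dots,e_n)$ have positive diagonal entries.

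\textbf{Step 1: the action preserves $\mathrm{TP}_{m,n}(\mathbb{F}_q)$.} For index sets $I\subseteq[m]$ and $J\subseteq[n]$ of equal size,
\[
\det\big((D_1AD_2)_{I,J}\big)=\prod_{i\in I}d_i\prod_{j\in J}e_j\,\det(A_{I,J}).
\]
Since $\mathbb{F}_q^{+}$ is a multiplicative subgroup of $\mathbb{F}_q^{\times}$, the prefactor is positive. A positive times a positive is positive, so $D_1AD_2$ is totally positive exactly when $A$ is.

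\textbf{Step 2: the stabilizer of a totally positive matrix.} If $A$ is totally positive, all its entries are nonzero, being $1\times1$ minors. The condition $D_1AD_2=A$ then forces $d_ie_j=1$ for all $i,j$. Hence $d_i=d_1^{\,}$ for all $i$ and $e_j=d_1^{-1}$ for all $j$. The stabilizer is therefore the diagonal copy $\{(c\,\iden_m,c^{-1}\iden_n):c\in\mathbb{F}_q^{+}\}$, which has order $|\mathbb{F}_q^{+}|$. By orbit--stabilizer, every orbit in $\mathrm{TP}_{m,n}(\mathbb{F}_q)$ has size
\[
|\mathbb{F}_q^{+}|^{m+n}/|\mathbb{F}_q^{+}|=|\mathbb{F}_q^{+}|^{m+n-1}.
\]

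\textbf{Step 3: each orbit has exactly one normalized representative.} Here "normalized" means the first row and first column are all ones. Given totally positive $A=(a_{ij})$, choose $e_j=a_{11}a_{1j}^{-1}$ and $d_i=a_{i1}^{-1}$. These are positive because the entries $a_{ij}$ are positive. A direct check shows that $D_1AD_2$ has first column equal to $1$ and first row equal to $1$.

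For uniqueness, suppose two normalized matrices lie in the same orbit, say $B'=D_1BD_2$. Comparing first rows gives $d_1e_j=1$ for every $j$. Comparing first columns gives $d_ie_1=1$ for every $i$. Together these force $(D_1,D_2)$ to lie in the stabilizer from Step 2, so $B'=B$.

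\textbf{Conclusion.} The orbits are in bijection with the $r$ normalized totally positive matrices, and each orbit has size $|\mathbb{F}_q^{+}|^{m+n-1}$. This gives $|\mathrm{TP}_{m,n}(\mathbb{F}_q)|=r\cdot|\mathbb{F}_q^{+}|^{m+n-1}$.

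The only delicate point is the stabilizer computation, which depends on all entries of $A$ being nonzero. That holds automatically because every $1\times1$ minor of a totally positive matrix is positive. Everything else is routine bookkeeping.
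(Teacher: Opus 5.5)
Your proof is correct and is essentially the paper's argument. The paper uses the same relation $B=PAQ$ with positive diagonal $P,Q$, shows it preserves total positivity, and normalizes the first row and column to get a unique representative per class. The only cosmetic difference is in counting the class size: the paper uses an explicit bijection $B\mapsto(b_{1,1},\dots,b_{m,1},b_{1,2},\dots,b_{1,n})$ onto $(\mathbb{F}_q^{+})^{m+n-1}$, while you compute the stabilizer and apply orbit--stabilizer. The paper's injectivity check is exactly your stabilizer computation.
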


\begin{proof}
Define a relation $\sim$ on $M_{m,n}(\mathbb{F}_q)$ where $A\sim B$ if $B = PAQ$, where $P \in \mathrm{GL}_m(\mathbb{F}_q)$ and $Q \in \mathrm{GL}_n(\mathbb{F}_q)$ are diagonal matrices with positive diagonal entries. It is trivial to see that $\sim$ is an equivalence relation. We first prove the following two properties of $\sim$.

\medskip

\noindent Claim 1: Equivalence class of a totally positive matrix consists entirely of totally positive matrices.
    
\noindent Proof of the claim: Suppose $A\in \mathrm{TP}_{m,n}(\mathbb{F}_q)$ and $B = PAQ$ for some diagonal matrices $P\in \mathrm{GL}_m(\mathbb{F}_q)$ and $Q\in \mathrm{GL}_n(\mathbb{F}_q)$ with positive diagonal entries. Let 
\[P=\operatorname{diag}(p_1, p_2, \dots, p_m),\quad Q = \operatorname{diag}(q_1, q_2, \dots, q_n).
\]
    For a fix $1\leq k\leq\min\{m,n\}$ and $I = \{i_1, i_2, \dots, i_k\} \subseteq [m]$ and $J = \{j_1, j_2, \dots, j_k\} \subseteq [n]$,
\[ 
\det(B_{I,J}) = \left( \prod_{r=1}^k p_{i_r} q_{j_r} \right) \cdot \det(A_{I,J}). 
\]
    Since all $p_i$'s and $q_j$'s are positive and $A$ is a totally positive matrix, $\det(B_{I,J})$ is also positive. Hence, $B$ is a totally positive matrix. This proves the first claim.

    \medskip
    
    \noindent Claim 2: Each equivalence class of a totally positive matrix contains $\left( |P(\mathbb{F}_q)| \right)^{m + n - 1}$ elements. 
    
    \noindent Proof of the claim: Let $A = (a_{i,j})_{m\times n}\in \mathrm{TP}_{m,n}(\mathbb{F}_q)$ be a totally positive matrix. Define the map $\phi: [A] \rightarrow ({\mathbb{F}_q^{+}})^{m+n-1}$ by
\[ 
\phi(B) = (b_{1,1}, b_{2,1}, \dots, b_{m,1}, b_{1,2}, b_{1,3}, \dots, b_{1,n}), 
\]
    where $B = (b_{i,j})_{m\times n} \in [A]$.

    We first show that $\phi$ is surjective. Let
\[ 
b = (b_{1,1}, b_{2,1}, \dots, b_{m,1}, b_{1,2}, b_{1,3}, \dots, b_{1,n}) \in ({\mathbb{F}_q^{+}})^{m+n-1}.
\]
    Define
\[ 
P = \operatorname{diag}\left( \frac{b_{1,1}}{a_{1,1}}, \frac{b_{2,1}}{a_{2,1}}, \dots, \frac{b_{m,1}}{a_{m,1}} \right), 
\]
and
\[ 
Q = \operatorname{diag}\left( 1, \frac{a_{1,1} b_{1,2}}{b_{1,1} a_{1,2}}, \dots, \frac{a_{1,1} b_{1,n}}{b_{1,1} a_{1,n}} \right). 
\]
    Then $PAQ \in [A]$ and satisfies $\phi(PAQ) = b$. Hence, $\phi$ is surjective.

    Now, we show $\phi$ is injective. Suppose $B=(b_{i,j})_{m\times n}, C=(c_{i,j})_{m\times n}\in [A]$ such that $\phi(B)=\phi(C)$. Let $P=\operatorname{diag}(p_1, \dots, p_m)$ and $Q = \operatorname{diag}(q_1, \dots, q_n)$ with positive diagonal entries such that $C=PBQ$. Therefore
\[ p_i q_1 b_{i,1} = c_{i,1} = b_{i,1} \quad \text{and} \quad p_1 q_j c_{1,j} = b_{1,j} = c_{1,j},
\]
    and since $c_{i,j} \neq 0$, we deduce that $p_i q_1 = 1$ for all $i\in [m]$ and $p_1 q_j = 1$ for all $j\in [n]$. So $p_i=p_1$ and $q_j = q_1$ for all $i, j$, implying $PBQ = B$, that is, $B = C$. Thus, $\phi$ is injective. This proves the second claim.

    Therefore, by Claims 1 and 2, the relation $\sim$ partitions the set $\mathrm{TP}_{m,n}(\mathbb{F}_q)$ into equivalence classes, each of size $|\mathbb{F}_q^{+}|^{m+n-1}$. Moreover, by Claim 2, each equivalence class contains a unique totally positive matrix with all entries in the first row and first column entries equal to $1$. Hence, the total number of totally positive matrices in $\mathrm{TP}_{m,n}(\mathbb{F}_q)$ is $r \cdot |\mathbb{F}_q^{+}|^{m+n-1}$, which proves the proposition.
\end{proof}

An observation from the proof of \cref{prop:tp count form} is that every totally positive matrix is equivalent to a unique totally positive matrix whose entries in the first row and the first column are all equal to 1. Since matrices of this form will play a significant role in what follows, we introduce the following terminology.

\begin{definition}
We called a totally positive matrix whose entries in the first row and the first column are all equal to 1 an \emph{elementary totally positive matrix}.
\end{definition}

We now gives a formula for the number of elementary totally positive matrices for small orders.

\begin{lemma}\label{lem:elem tpm order 2 3}
The number of elementary totally positive matrices in $\mathrm{TP}_{2,2}(\mathbb{F}_q)$ is,
\[
\begin{cases}
q-2 & \text{if $q$ is even},\\[0.5em]
\dfrac{q-5}{4} & \text{if $q \equiv 1 \pmod{4}$},\\[0.5em]
\dfrac{q-3}{4} & \text{if $q \equiv 3 \pmod{4}$}.
\end{cases}
\]
If $\mathbb{F}_q$ has characteristic $2$, then the number of elementary totally positive matrices in $\mathrm{TP}_{3,3}(\mathbb{F}_q)$ is
\[(q-2)(q-3)(q^2-9q+21).
\]
\end{lemma}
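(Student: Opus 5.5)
For the first assertion I would work directly from the definition. An elementary totally positive matrix in $\mathrm{TP}_{2,2}(\mathbb{F}_q)$ has the form $\begin{pmatrix}1&1\\1&y\end{pmatrix}$, so its minors are $1,1,1,y$ and $y-1$. Total positivity is therefore equivalent to $y\in\mathbb{F}_q^{+}$ and $y-1\in\mathbb{F}_q^{+}$. \cref{thm:a a-1 is +ve} counts exactly these $y$ and gives the three stated values.

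For the $3\times 3$ case in characteristic $2$, positive simply means nonzero, and signs play no role. I would write the matrix as
\[
\begin{pmatrix}1&1&1\\1&a&b\\1&c&d\end{pmatrix}
\]
and list the conditions.
\begin{itemize}
\item The entries give $a,b,c,d\neq 0$.
\item The $2\times 2$ minors using row $1$ give $a,b,c,d\neq 1$, $a\neq b$ and $c\neq d$.
\item The minors using rows $\{2,3\}$ give $c\neq a$, $d\neq b$ and $ad\neq bc$.
\item The full determinant is $(a-1)(d-1)-(b-1)(c-1)$, which must be nonzero.
\end{itemize}
The pair $(a,b)$ consists of distinct elements of $\mathbb{F}_q\setminus\{0,1\}$, which gives $(q-2)(q-3)$ choices. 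It then suffices to show that, for every such fixed $(a,b)$, the number of pairs $(c,d)$ is $q^2-9q+21$.

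Fix $c\in\mathbb{F}_q\setminus\{0,1,a\}$; there are $q-3$ such $c$. The element $d$ must avoid the set $\{0,1,b\}$, which leaves $q-3$ candidates. It must also avoid three further values:
\[
f_1=c,\qquad f_2=bc/a,\qquad f_3=1+\frac{(b-1)(c-1)}{a-1}.
\]
I would check directly that these three values are pairwise distinct whenever $c\neq 1,a$. For instance, $f_1=f_3$ forces $(c-1)(a-1)=(b-1)(c-1)$, hence $a=b$. Similarly, $f_2=f_3$ reduces to $c(a-b)=a(a-b)$, hence $c=a$. Next I would determine when an $f_i$ already lies in $\{0,1,b\}$, since such coincidences must not be subtracted twice:
\begin{itemize}
\item $f_1\in\{0,1,b\}$ only when $c=b$;
\item $f_2\in\{0,1,b\}$ only when $c=a/b$ (the case $f_2=b$ would force $c=a$);
\item $f_3\in\{0,1,b\}$ only when $f_3=0$, that is, $c=c_0:=(a+b)/(b+1)$ (the cases $f_3=1$ and $f_3=b$ would force $c=1$ and $c=a$).
\end{itemize}
Consequently, the number of admissible $d$ for a given $c$ is
\[
(q-6)+[c=b]+[c=a/b]+[c=c_0].
\]

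Summing over the admissible $c$ gives $(q-3)(q-6)+N$, where $N$ is the number of the three values $b$, $a/b$, $c_0$ that are themselves admissible choices of $c$, that is, lie outside $\{0,1,a\}$. Each of them does: $b\notin\{0,1,a\}$ by hypothesis; $a/b$ equals $0$, $1$ or $a$ only if $a=0$, $a=b$ or $b=1$; and $c_0$ equals $0$, $1$ or $a$ only if $a=b$, $a=1$ or $b=ab$ (so $a=1$). Hence $N=3$, and the count is
\[
(q-3)(q-6)+3=q^2-9q+21,
\]
which is independent of $(a,b)$. Multiplying by $(q-2)(q-3)$ yields the formula.

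The main obstacle is the coincidence bookkeeping among the excluded values of $d$. Once the pairwise distinctness of $f_1,f_2,f_3$ and their possible collisions with $\{0,1,b\}$ are settled as above, the rest is routine counting.
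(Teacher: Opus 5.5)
Your proof is correct and follows essentially the paper's approach: the $2\times 2$ case reduces to \cref{thm:a a-1 is +ve}, and for the $3\times 3$ case you fix the $(q-2)(q-3)$ choices of $(a,b)$ and show that exactly $q^2-9q+21$ pairs $(c,d)$ complete the matrix. The paper gets this count by deleting forbidden pairs from all $q^2$ pairs one condition at a time, stating without detail that $ad\neq bc$ and the determinant condition each remove $q-4$ further pairs; your fibering over $c$, with explicit checks of when the excluded values of $d$ coincide, actually verifies that overlap bookkeeping.
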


\begin{proof}
We need to determine the number of ways in which the $(2,2)$-entry of the elementary totally positive matrix can be chosen. Observe that $a \in \mathbb{F}_q$ can be the $(2,2)$-entry of elementary totally positive matrix if and only if $a \in \mathbb{F}_q^{+}$ and $a-1 \in \mathbb{F}_q^{+}$. Therefore, by \cref{thm:a a-1 is +ve}, we obtain the first part.

We now find the number of elementary totally positive matrices in $M_3(\mathbb{F}_q)$. Let
\[A=\begin{pmatrix}
        1 & 1 & 1\\
        1 & a & b\\
        1 & c & d
    \end{pmatrix}
\]
    where $a,b \in \mathbb{F}_q \setminus \{0,1\}$ and $a \neq b$ are fixed. There are $q^2$ choices of $c,d$; however, they must satisfy the following conditions so that $A$ is totally positive:
    \begin{enumerate}
        \item $c$ or $d$ must not be zero: This removes $2q-1$ choices of $c,d$.
        \item $c$ or $d$ cannot be equal to $1$: This removes an additional $2q-3$ choices of $c,d$.
        \item $c \neq d$: This removes an additional $q-2$ choices of $c,d$.
        \item $c \neq a$ or $d \neq b$: This removes an additional $2q-7$ choices of $c,d$.
        \item $ad \neq bc$: This removes an additional $q-4$ choices of $c,d$.
        \item The last row of $A$ is not linearly dependent on the first two rows of $A$: This removes an additional $q-4$ choices of $c,d$.
    \end{enumerate}
    Therefore, there are $(q^2-9q+21)$ choices for $c,d$.

    Since there are $(q-2)(q-3)$ choices of $a,b$, the number of elementary totally positive matrices in $\mathrm{TP}_{3,3}(\mathbb{F}_q)$ is
\[(q-2)(q-3)(q^2-9q+21).
\]
    This proves the second part.
\end{proof}

Now, we derive formulas for the number of totally positive matrices of small order.
\begin{itemize}
\item By \cref{prop:tp count form} and \cref{lem:elem tpm order 2 3}, the number of totally positive matrices in $M_{2}(\mathbb{F}_q)$ is
\[
|\mathrm{TP}_{2,2}(\mathbb{F}_q)| =
\begin{cases}
(q - 2)(q - 1)^3 & \text{if } q \text{ is even}, \\
\left(\dfrac{q - 5}{4}\right)\left(\dfrac{q - 1}{2}\right)^3 & \text{if } q \equiv 1 \pmod{4}, \\
\left(\dfrac{q - 3}{4}\right)\left(\dfrac{q - 1}{2}\right)^3 & \text{if } q \equiv 3 \pmod{4}, 
\end{cases}
\]
and the number of totally positive matrices in $M_{3}(\mathbb{F}_q)$ when $q$ is even is
\[
|\mathrm{TP}_{3,3}(\mathbb{F}_q)|=(q-1)^5(q-2)(q-3)(q^2-9q+21)
\]

\item When $q$ is even, the total number of totally positive matrices in $M_{2,k}(\mathbb{F}_q)$ is
\[
|\mathrm{TP}_{2,k}(\mathbb{F}_q)|=\left(\prod_{i=2}^k(q-i)\right)(q-1)^{k+1}=(k-1)! \binom{q - 2}{k - 1}(q - 1)^{k + 1},
\]
as there are $q - 2$ choices for $a_{2,2}$, $q - 3$ choices for $a_{2,3}$, $q - 4$ choices for $a_{2,4}$, and so on.

\item Using \texttt{SageMath}~\cite{sagemath}, we find that $|\mathrm{TP}_{3,3}(\mathbb{F}_{q})|=0$ for odd $q$ and $q\leq 17$. 
For the next primes, we obtain
\begin{enumerate}
    \item $|\mathrm{TP}_{3,3}(\mathbb{F}_{19})|=9,904,396=2^2\cdot 19^5$
    \item $|\mathrm{TP}_{3,3}(\mathbb{F}_{23})|=38,618,058=2\cdot 3\cdot23^5$
    \item $|\mathrm{TP}_{3,3}(\mathbb{F}_{25})|=58,593,750=2\cdot 3\cdot 5^{10}$
\end{enumerate}

\item For $3 \times 4$ matrices for $q$ even, we find again using \texttt{SageMath}: 
\begin{enumerate}
    \item $|\mathrm{TP}_{3,4}(\mathbb{F}_{8})|=1,200=2^4\cdot 3\cdot 5^2$
    \item $|\mathrm{TP}_{3,4}(\mathbb{F}_{16})|=2,075,472=2^4\cdot 3^2\cdot 7\cdot 29\cdot 71$
    \item $|\mathrm{TP}_{3,4}(\mathbb{F}_{32})|=415,217,040=2^4\cdot 3\cdot5\cdot 7\cdot 89\cdot 2777$
\end{enumerate}

\end{itemize}

\subsection{Bounds relating the order and $q$}

For a fixed $q$, we now establish an upper bound on the order for which a totally positive matrix over $\mathbb{F}_q$ can exist. We first show that when $q\equiv 3 \pmod{4}$, interchanging rows or columns of a totally positive matrix may result in a matrix that is no longer totally positive. We begin by recalling the definition of a permutation matrix. For a permutation $\pi:[n]\to[n]$, the corresponding \textit{permutation matrix} is the matrix $P_\pi=(p_{i,j})_{n\times n}$, where $p_{i,j}=1$ if $\pi(i)=j$ and $p_{i,j}=0$ otherwise.

\begin{lemma}\label{lem: perm action on psd 5 q cong 3}
    Let $q \equiv 3 \pmod{4}$ be a prime power, and let $A \in \mathrm{TP}_{m,n}(\mathbb{F}_q)$. Then for any nonidentity permutation matrices $P \in \mathrm{GL}_m(\mathbb{F}_q)$ and $Q \in \mathrm{GL}_n(\mathbb{F}_q)$, the matrices $PA$ and $AQ$ are not totally positive. 
\end{lemma}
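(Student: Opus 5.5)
The whole argument rests on one fact: when $q\equiv 3\pmod 4$, $-1$ is not a square (\cref{thm:x^2=-1}), so $-1\in\mathbb{F}_q^{-}$. Hence $x\in\mathbb{F}_q^{+}$ implies $-x\in\mathbb{F}_q^{-}$, because a product of a positive and a negative element is negative. The plan is to find a $2\times 2$ minor of $PA$ (or $AQ$) that equals the negative of a $2\times 2$ minor of $A$. That minor must then be negative, so $PA$ is not totally positive.

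First treat $PA$; the case $AQ$ follows by applying the same argument to $A^T$, which is totally positive since minors of the transpose are the transposes of minors. Let $P=P_\pi$ with $\pi$ nonidentity, so the $i$th row of $PA$ is row $\pi(i)$ of $A$. Because $\pi\ne\mathrm{id}$, there are indices $i<i'$ with $\pi(i)>\pi(i')$ (an inversion). The case $m=1$ cannot occur, since then the only permutation is the identity, so $m\ge 2$.

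If $n\ge 2$, take columns $\{1,2\}$. The $2\times 2$ submatrix of $PA$ on rows $\{i,i'\}$ and columns $\{1,2\}$ is the submatrix $A_{\{\pi(i'),\pi(i)\},\{1,2\}}$ with its two rows swapped. So its determinant is $-\det\bigl(A_{\{\pi(i'),\pi(i)\},\{1,2\}}\bigr)$. That determinant is positive because $A$ is totally positive, so its negative lies in $\mathbb{F}_q^{-}$. Thus $PA\notin\mathrm{TP}_{m,n}(\mathbb{F}_q)$.

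The main obstacle is the degenerate case $n=1$ (and symmetrically $m=1$ for $AQ$). Then only $1\times 1$ minors exist, and a permutation merely rearranges positive entries, so $PA$ remains totally positive. The statement as written is therefore false in this case. I would resolve this by stating the implicit assumption $m,n\ge 2$ (or $\min(m,n)\ge2$), which is the setting used in the subsequent order bounds. Under that assumption, the inversion argument above completes the proof.
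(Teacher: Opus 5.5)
Your proof is correct and is the paper's argument: take an inversion $i<i'$ of $\pi$ and the minor of $PA$ on rows $\{i,i'\}$ and columns $\{1,2\}$, which is the negative of a positive minor of $A$ and hence negative because $-1\in\mathbb{F}_q^{-}$. Your caveat is also right: the statement fails when $n=1$ (resp.\ $m=1$ for $AQ$), the paper's proof silently uses columns $\{1,2\}$, and the lemma is only applied in \cref{prop:bound tpm}, where $m,n\ge 2$.
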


\begin{proof}
  Let $P$ correspond to a permutation $\pi \in S_n$. Since $\pi$ is not the identity permutation, there exists indices $i, j \in [n]$ such that $i<j$ and $\pi(j)<\pi(i)$. The determinant of the submatrix $PA_{\{i,j\}, \{1,2\}}$ is negative, so $PA$ is not a totally positive matrix. A similar argument shows that $AQ$ is also not totally positive matrix.
\end{proof}

\begin{proposition}\label{prop:bound tpm}
    If there exists a totally positive matrix in $M_{m,n}(\mathbb{F}_q)$ where $m, n \geq 2$, then
\[
q \geq 
\begin{cases}
    \max(m+1, n+1) & \text{ for even } q \\
    \max(4m+1, 4n+1) & \text{ for } q \equiv 1 \pmod{4}\\
    4(m+n) - 9 & \text{ for } q \equiv 3 \pmod{4}.
\end{cases}
\]
\end{proposition}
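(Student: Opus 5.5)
The plan is to reduce to elementary totally positive matrices and then count candidate entries with \cref{thm:a a-1 is +ve}. By \cref{prop:tp count form}, a matrix in $\mathrm{TP}_{m,n}(\mathbb{F}_q)$ exists if and only if an elementary one exists, so fix an elementary $A=(a_{i,j})$ with $a_{1,j}=a_{i,1}=1$. Write the second row as $1=x_1,x_2,\dots,x_n$ and the second column as $1=y_1,y_2,\dots,y_m$, so that $x_2=y_2=a_{2,2}$.

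The $1\times1$ and $2\times2$ minors using rows $\{1,2\}$ and columns $\{i,j\}$ with $i<j$ give two conditions: every $x_j$ is positive, and $x_j-x_i\in\mathbb{F}_q^{+}$ for all $i<j$. In the same way every $y_i$ is positive, and $y_j-y_i\in\mathbb{F}_q^{+}$ for $i<j$. In particular, each $x_j$ with $j\ge2$ and each $y_i$ with $i\ge 2$ lies in
\[
T=\{y\in\mathbb{F}_q^{+}: y-1\in\mathbb{F}_q^{+}\},
\]
whose size is given by \cref{thm:a a-1 is +ve}.

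For $q$ even, every nonzero element is positive, so the only constraints are that $x_1,\dots,x_n$ are distinct and nonzero. This gives $n\le q-1$, and the second column gives $m\le q-1$ in the same way. For $q\equiv1\pmod4$, the elements $x_2,\dots,x_n$ are distinct elements of $T$, and $|T|=(q-5)/4$. Hence $n-1\le (q-5)/4$, that is, $q\ge 4n+1$, and the column argument gives $q\ge 4m+1$. In this case only a single row or column is needed, and the order of the entries plays no role.

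For $q\equiv3\pmod4$ the target $q\ge4(m+n)-9$ is equivalent to $m+n-3\le (q-3)/4=|T|$. So I aim to show that the $m+n-3$ elements
\[
x_2,\dots,x_n,\;y_3,\dots,y_m
\]
are pairwise distinct elements of $T$. Distinctness within the row and within the column is already known. The new ingredient is that here $-1$ is negative, so each difference $x_j-x_i$ is positive for exactly one ordering of the pair, in the spirit of \cref{lem: perm action on psd 5 q cong 3}.

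The main obstacle is excluding a coincidence $x_j=y_i=c$ with $i,j\ge3$. To do this I would use the $3\times3$ submatrix on rows $\{1,2,i\}$ and columns $\{1,2,j\}$, which is
\[
\begin{pmatrix}1&1&1\\1&a&c\\1&c&d\end{pmatrix},\qquad a=a_{2,2},\ d=a_{i,j}.
\]
Its determinant equals $(a-1)(d-1)-(c-1)^2$. I would combine this with the $2\times2$ minors $ad-c^2$, $d-c$ (rows $\{2,i\}$, columns $\{1,j\}$) and $c-a$, and with the fact that $c-a$ is positive while $a-c$ is negative. The aim is to exhibit some minor, possibly after a further row or column choice in the spirit of \cref{lem: perm action on psd 5 q cong 3}, that equals the negative of a nonzero square and so is negative, giving a contradiction.

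If this symmetric-coincidence argument resists, my fallback is to bound $|T\cap(a+\mathbb{F}_q^{+})|$ directly. For $i,j\ge3$, both $x_j$ and $y_i$ lie in $T\cap(a+\mathbb{F}_q^{+})$, so such a bound might still reach the required count.
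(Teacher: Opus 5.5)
Your reduction to elementary matrices and your arguments for $q$ even and for $q\equiv 1\pmod 4$ are correct, and $(q-5)/4$ is the right count there. The gap is in the case $q\equiv 3\pmod 4$. The distinctness you are aiming for, $x_j\neq y_i$ for $i,j\ge 3$, is false, so no choice of minors will give the contradiction.

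Over $\mathbb{F}_{19}$, where $\mathbb{F}_{19}^{+}=\{1,4,5,6,7,9,11,16,17\}$, consider
\[
E=\begin{pmatrix}1&1&1\\1&5&6\\1&6&17\end{pmatrix}.
\]
It is elementary totally positive: its entries are positive, its nine $2\times 2$ minors are $4,5,1,5,16,11,1,11,11$, and $\det E=39\equiv 1$. Here $x_3=y_3=6$, so $\{x_2,x_3,y_3\}=\{5,6\}$ has only two elements.

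Already over $\mathbb{R}$ the symmetric matrix $\begin{pmatrix}1&1&1\\1&2&3\\1&3&6\end{pmatrix}$ is totally positive. So an argument built only from identities among minors and the multiplicative sign rules, all of which also hold over $\mathbb{R}$, cannot exclude $x_j=y_i$.

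Your fallback does not close the gap either. Since the $x$'s and $y$'s may coincide inside $T\cap(a+\mathbb{F}_q^{+})$, it only bounds $\max(m,n)-2$ by the size of that set. To reach $m+n-3\le|T|$ you would need that size to be at most $(|T|-1)/2$, which fails in general: for $a=6$ in $\mathbb{F}_{19}$ the set is $\{7,17\}$, while $|T|=|\{5,6,7,17\}|=4$.

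The missing idea is to take the $m+n-3$ cells along a hook through a pivot, rather than an L with a common corner. Write $x\prec y$ if $y-x\in\mathbb{F}_q^{+}$. For $q\equiv 3\pmod 4$ this relation is antisymmetric, because $x\prec y\prec x$ would make $-1=(x-y)/(y-x)$ a square.

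Take the cells $a_{2,2},\dots,a_{2,n},a_{3,n},\dots,a_{m,n}$.
\begin{itemize}
\item Each lies in $T$, since $a_{i,j}-1=\det A_{\{1,i\},\{1,j\}}$.
\item The entries of row $2$ are pairwise distinct, and so are the entries of column $n$.
\item Suppose $a_{2,j}=a_{i,n}$ with $j<n$ and $i\ge 3$. The minor on rows $\{1,2\}$, columns $\{j,n\}$ gives $a_{2,j}\prec a_{2,n}$. The minor on rows $\{2,i\}$, columns $\{1,n\}$ gives $a_{2,n}\prec a_{i,n}$. Together these form a $2$-cycle, which is impossible.
\end{itemize}
Hence $m+n-3\le (q-3)/4$, which is exactly the claim.

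This hook is the monotone path from $(2,2)$ to $(m,n)$ that the paper's index function $f$ is meant to follow. However, the paper phrases the argument via a linearly ordered set $X$ containing all the entries. Such an $X$ need not exist, because $\prec$ is not transitive on $T$: for $E$ above, $5\prec 6\prec 17\prec 5$. The hook version uses only antisymmetry, so it is the form you should write down.
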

    
\begin{proof}
Since every totally positive matrix is equivalent to an elementary totally positive matrix of the same order and the inequality only involve the sizes of he matrices and the field, we prove the proposition for elementary totally positive matrices.

First, consider the case when $q \not\equiv 3 \pmod{4}$. For any fixed row or column, no two entries are equal. Moreover, by \cref{lem:elem tpm order 2 3}, at most $q-2$ entries can appear in a fixed row or column when $q$ is even, while at most $(q-5)/2$ entries can appear in a fixed row or column when $q \equiv 1 \pmod{4}$. This establishes the proposition in this case.

Now consider the case when $q \equiv 3 \pmod{4}$. Let $X=\{x_1,x_2,\dots,x_k\}$ be a ordered subset of positive elements such that $x_i-1$ is positive for all $i$, and $x_j-x_i$ is positive whenever $j>i$. Suppose $A=(a_{i,j})_{m \times n}$ is an elementary totally positive matrix. For $i \in [2, m]$ and $j \in [2, m]$, the submatrix $A_{[\{1,i\},\{1,j\}]}$ is totally positive; hence $a_{i,j} \in X$.

Define a function $f :[2, m]^2 \to [k]$ by $f(i,j) = \ell$ whenever $a_{i,j} = x_\ell$.
By \cref{lem: perm action on psd 5 q cong 3}, the positivity of $\det(A_{\{1,i\},\{j_1,j_2\}})$ implies that, for each fixed $r\in [2, m]$, we have
\[
f(r,2) < f(r,3) < \cdots < f(r,n).
\]
Similarly, the positivity of $\det(A_{\{i_1,i_2\},\{1,j\}})$ implies that, for each fixed $s\in [2, n]$, we have
\[
f(2,s) < f(3,s) < \cdots < f(m,s).
\]
By iteratively applying these inequalities, together with the fact that $f(2,2) \geq 1$, we deduce that
\[
f(i,j) \geq i + j - 3 \qquad \text{for all } i\in [2, m]\text{ and } j\in [2, n].
\]
In particular, this implies
\[
m + n \leq f(m,n) + 3 \leq k + 3.
\]
By \cref{lem:elem tpm order 2 3}, we have $k \leq (q-3)/4$. Therefore,
\[
m + n \leq k + 3 \leq \frac{q+9}{4},
\]
which completes the proof.
\end{proof}

Observe that in the proof of \cref{prop:bound tpm}, only minors of order $2$ of the form $A_{[\{1,i\},\{1,j\}]}$ are considered. In particular, no other  minors of order $2$ are considered, and minors of order greater than $2$ are not used in the proof. Consequently, the bound obtained in \cref{prop:bound tpm} is likely far from optimal and can be improved significantly. This motivates the following question.

\begin{question}
For fixed positive integers $m$ and $n$, find the minimal prime power $q$ such that there exists an $m \times n$ totally positive matrix over $\mathbb{F}_q$.
In particular how does this minimal value of $q$ grow with $m$ and $n$?
\end{question}

\subsection{Structural formula}

At present, we do not have a simple closed-form expression for the number of totally positive matrices in $M_{m,n}(\mathbb{F}_q)$ when $\mathbb{F}_q$ is of odd characteristic and $\max\{m,n\} \geq 3$, or when $\mathbb{F}_q$ is of characteristic $2$ and $m,n \geq 3$. In the absence of such a formula, we instead derive a structural formula for the number of totally positive matrices in $M_{m,n}(\mathbb{F}_q)$ using an approach similar to that of \cref{thm:psd 5 struc formula}.

Observe that the number of totally positive matrices in $\mathrm{TP}_{m,n}(\mathbb{F}_q)$ and $\mathrm{TP}_{n,m}(\mathbb{F}_q)$ is the same, so in the rest of this section we can assume that $m \leq n$. 

First consider the case when $q$ is even. For each $r \leq m$ and $q=2^k$, let $\mathcal{X}_{r,k} $ denote the set of all matrices $A\in M_{m,n}(\mathbb{F}_{2^k})$ whose every $r \times r$ submatrix has positive determinant. In particular,
\begin{equation}\label{eq:tot_pos as intersection}
    \mathrm{TP}_{m,n}(\mathbb{F}_{2^k})=\bigcap_{r=1}^m \mathcal{X}_{r,k}
\end{equation}
For every $r\leq m$, let $\mathcal{A}_{r,k}$ be set of all matrices in $ M_{m,n}(\mathbb{F}_{2^k})$, which has at least one singular $r\times r$ submatrix. Therefore $\mathcal{A}_{r,k}=\mathcal{X}_{r,k}^c$ for all $r\leq m$ and positive integer $k$. From \eqref{eq:tot_pos as intersection}, we have 
\[
\mathrm{TP}_{m,n}(\mathbb{F}_{2^k})=\bigcap_{r=1}^m \mathcal{A}_{r,k}^c=\left(\bigcup_{r=1}^m\mathcal{A}_{r,k}\right)^c.
\]
Therefore, taking cardinality, we get 
\begin{equation}\label{eq:tot_pos as intersection even count}
    |\mathrm{TP}_{m,n}(\mathbb{F}_{2^k})|=2^{kmn}-\left|\bigcup_{r=1}^m\mathcal{A}_{r,k}\right|
\end{equation}

We are now ready to state the structural formula for the number of totally positive matrices in $M_{m,n}(\mathbb{F}_{2^k})$.

\begin{theorem}
    For every positive integers $m,n$, there exists positive integer $t$, integers $c_{1},c_{2},\dots,c_{t}$ and algebraic numbers $\gamma_{1},\gamma_{2},\dots,\gamma_{t}$ such that the number of totally positive matrices in $M_{m,n}(\mathbb{F}_{2^k})$ is of the form
\[c_{1} \gamma_{1}^{k} + c_{2} \gamma_{2}^{k} + \cdots + c_{t} \gamma_{t}^{k}.
\]
\end{theorem}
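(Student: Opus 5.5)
In characteristic two every nonzero element is positive, so a matrix is totally positive exactly when none of its minors vanishes. By \eqref{eq:tot_pos as intersection even count}, it is enough to show that $|\bigcup_{r=1}^m\mathcal{A}_{r,k}|$ has the stated form $\sum_i c_i\gamma_i^k$. The term $2^{kmn}$ is already of this form, with $\gamma=2^{mn}$ and $c=1$.

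First I will refine the union. Let $\mathcal{M}$ be the finite set of all pairs $(I,J)$ with $I\subseteq[m]$, $J\subseteq[n]$ and $|I|=|J|\geq 1$. For each $(I,J)\in\mathcal{M}$, let $\mathcal{B}_{I,J,k}\subseteq M_{m,n}(\mathbb{F}_{2^k})$ be the set of matrices with $\det(A_{I,J})=0$. Then $\bigcup_r\mathcal{A}_{r,k}=\bigcup_{(I,J)\in\mathcal{M}}\mathcal{B}_{I,J,k}$. Inclusion--exclusion gives
\[
\Bigl|\bigcup_{(I,J)\in\mathcal{M}}\mathcal{B}_{I,J,k}\Bigr|=\sum_{\varnothing\neq\mathcal{S}\subseteq\mathcal{M}}(-1)^{|\mathcal{S}|+1}\Bigl|\bigcap_{(I,J)\in\mathcal{S}}\mathcal{B}_{I,J,k}\Bigr|.
\]
The key observation is that each intersection $\bigcap_{(I,J)\in\mathcal{S}}\mathcal{B}_{I,J,k}$ is exactly the set of $\mathbb{F}_{2^k}$-points of the affine algebraic set in $mn$ variables $X=(x_{a,b})$ cut out by the polynomials $\{\det(X_{I,J}) : (I,J)\in\mathcal{S}\}\subseteq\mathbb{F}_2[X]$. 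This family of polynomials does not depend on $k$. So the cardinality equals $N_k(\{\det(X_{I,J})\}_{(I,J)\in\mathcal{S}})$, computed with base field $\mathbb{F}_2$.

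Next I apply \cref{lem:rational funcn}, which relies on \cref{thm: dwork thm}, to each such system with $q=2$. This gives algebraic numbers $\delta_j$ and $\gamma_i$ with $N_k=\sum_j\delta_j^k-\sum_i\gamma_i^k$ for all $k\geq1$. Substituting into the inclusion--exclusion sum, which has finitely many terms ($2^{|\mathcal{M}|}-1$ of them), and then into \eqref{eq:tot_pos as intersection even count}, expresses $|\mathrm{TP}_{m,n}(\mathbb{F}_{2^k})|$ as a finite signed sum of $k$-th powers of algebraic numbers. Collecting equal bases gives integer coefficients $c_i$, as claimed.

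The main obstacle is only a matter of bookkeeping. The structural counting with zeta functions only handles equations, so the inequality conditions ``minor is nonzero'' must be turned into equalities. Complementation followed by inclusion--exclusion does this, so no auxiliary variables are needed. An alternative is to introduce a variable $y_{I,J}$ with $y_{I,J}\det(X_{I,J})=1$ for every minor. Over characteristic two, $y$ is then uniquely determined, so this gives a single algebraic set whose point count over $\mathbb{F}_{2^k}$ equals $|\mathrm{TP}_{m,n}(\mathbb{F}_{2^k})|$ directly. I would present that version as the cleaner proof, since Lemma~\ref{lem:rational funcn} then applies at once, and keep the inclusion--exclusion route as the one matching \eqref{eq:tot_pos as intersection even count}. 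In both versions one must check that the defining polynomials have coefficients in $\mathbb{F}_2$, so that a single zeta function governs every $k$. This holds because determinants have integer coefficients.
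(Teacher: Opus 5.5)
Your proposal is correct. Your first route is the paper's argument with a finer inclusion--exclusion. The paper works with the $m$ sets $\mathcal{A}_{r,k}$ and encodes ``some $r\times r$ minor vanishes'' as the single equation $\prod_{|I|=|J|=r}\det(X_{I,J})=0$, so it needs only $2^m-1$ terms. You index by individual minors instead, which costs $2^{|\mathcal{M}|}-1$ terms but cuts out each variety by the minors themselves; both work, since only finiteness matters.

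Your preferred second route is genuinely different and cleaner. Adjoining $y_{I,J}$ with $y_{I,J}\det(X_{I,J})=1$ gives one algebraic set over $\mathbb{F}_2$ whose $\mathbb{F}_{2^k}$-point count is exactly $|\mathrm{TP}_{m,n}(\mathbb{F}_{2^k})|$. A single application of \cref{lem:rational funcn} then finishes the proof, with no complement and no inclusion--exclusion. It also puts the even case in the same form as the paper's odd-characteristic \cref{thm:tp struc formula}.

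One small correction of attribution: $y_{I,J}$ is uniquely determined by $y_{I,J}\det(X_{I,J})=1$ in every characteristic. What characteristic two actually buys you is that nonzero equals positive. That is why the linear equation can replace $y_{I,J}^2\det(X_{I,J})=1$, and why you avoid the $2^{-|\mathcal{I}|}$ normalization needed in the odd case.
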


\begin{proof}
    By the inclusion exclusion principle, we have
\begin{equation}\label{eq2}
    \left| \bigcup_{i=1}^m \mathcal{A}_{i,k} \right|
= \sum_{j=1}^m (-1)^{j-1} 
\left(
\sum_{1 \leq i_1 < i_2 < \cdots < i_j \leq m }
\left| \mathcal{A}_{i_1,k} \cap \cdots \cap \mathcal{A}_{i_j,k} \right|
\right).
\end{equation}
Fix $1 \leq i_1 < i_2 < \cdots < i_{\ell} \leq m$, and let $X = (x_{a,b})_{1 \leq a\leq m,1\leq b \leq n}$ be an $m \times n$ matrix of variables. The set of matrices $\mathcal{A}_{i_1,k} \cap \cdots \cap \mathcal{A}_{i_{\ell},k}$ can be viewed as an algebraic set over the affine space $M_{m,n}(\mathbb{F}_{2^k})$, where the defining polynomials of this algebraic set in $\mathbb{F}_2[X]$ are
\[
f_j(x_{a,b} \mid 1 \leq a,b \leq n)=\prod_{I\in \binom{[m]}{i_j},J\in \binom{[n]}{i_j}}\det\left(X_{I,J}\right)=0
\]
where $j\in [{\ell}]$.
Applying \cref{lem:rational funcn} on $f_1,f_2,\dots,f_{\ell}$, we obtain algebraic numbers $\alpha_1,\alpha_2,\dots,\alpha_{n_1},\beta_1,\beta_2,\dots,\beta_{n_2}$, such that
\[
\left|\mathcal{A}_{i_1,k} \cap \mathcal{A}_{i_2,k} \cap \cdots \cap \mathcal{A}_{i_{\ell},k}\right|=\sum_{j=1}^{n_2}\beta_j^k -\sum_{i=1}^{n_1}\alpha_i^k
\]
Putting this in \eqref{eq2}, gives 
\[ 
\left| \bigcup_{i=1}^m \mathcal{A}_{i,k} \right|=\sum_{j=1}^{t}c_j\gamma_j'^k
\]
where $c_j$ is integer and $\gamma_j'$ are algebraic numbers.  Therefore, by \eqref{eq:tot_pos as intersection even count}, 
\[|\mathrm{TP}_{m,n}(\mathbb{F}_q)|=2^{kmn}-\left|\bigcup_{r=1}^mA_{r,k}\right|=2^{kmn}-\sum_{j=1}^{t}c_j\gamma_j'^k
\]
Which is of the desire form.
\end{proof}

Now suppose $p$ is odd. Let $\mathcal{I}$ be the set of all ordered pairs $(I,J)$ of nonempty subsets $I\subseteq [m]$ and $J\subseteq [n]$ with $|I|=|J|$. Let $X = (x_{a,b})_{m\times n}$ be an $m \times n$ matrix of variables, and let $Y=\{y_{I,J} \mid (I, J) \in\mathcal{I}\}$ be set of additional variables.

Consider the following system of polynomial equations in $\mathbb{F}_p[X,Y]$ defined by:
\[
\{g_{I,J}(X,Y)\mid (I,J)\in\mathcal{I}\}
\]
where 
\[
g_{I,J}(X,Y)=y_{I,J}^2 \det\left(X[I,J]\right) - 1.
\]
for $(I,J)\in\mathcal{I}$, and for each positive integer $k$, let $\mathcal{X}_{k}^0 \subseteq \mathbb{F}_{p^k}^{m\cdot n+|\mathcal{I}|}$ be the algebraic set defined by the above system of polynomial equations.

The following lemmas can be proved using arguments analogous to those in \cref{lem:part psd 5 1,lem:part psd 5 2}; therefore, we omit the proofs.

\begin{lemma}\label{lem:part tot pos 5 1}
If $(X',Y') \in \mathcal{X}_k^0$, then $X' \in \mathrm{TP}_{m,n}(\mathbb{F}_q)$. Conversely, if $X' \in \mathrm{TP}_{m,n}(\mathbb{F}_q)$, then there exists $Y'=(y_{I,J})_{(I,J) \in \mathcal{I}} \in \mathbb{F}_{p^m}^{|\mathcal{I}|}$ such that $(X',Y') \in \mathcal{X}_k^0$.
\end{lemma}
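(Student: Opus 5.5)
The argument mirrors \cref{lem:part psd 5 1}, with the pairs $(I,J)\in\mathcal{I}$ playing the role of the index set $T$. Throughout, write $q=p^k$; the field in the statement should be $\mathbb{F}_{p^k}$, so the $\mathbb{F}_{p^m}$ in the statement is read as $\mathbb{F}_{p^k}$. The two directions are handled separately.

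For the first implication, suppose $(X',Y')\in\mathcal{X}_k^0$ and fix $(I,J)\in\mathcal{I}$. The equation $g_{I,J}(X',Y')=0$ reads $y_{I,J}'^2\det(X'_{I,J})=1$. This forces $y'_{I,J}\neq 0$ and $\det(X'_{I,J})\neq 0$, and it gives
\[
\det(X'_{I,J})=\bigl(y_{I,J}'^{-1}\bigr)^2 .
\]
So $\det(X'_{I,J})$ is the square of a nonzero element, which is exactly positivity in $\mathbb{F}_{q}$. Since $(I,J)$ ranges over all nonempty pairs of equal-size index sets, every minor of $X'$ is positive. By \cref{def:tp}, this means $X'\in\mathrm{TP}_{m,n}(\mathbb{F}_{q})$.

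For the converse, suppose $X'\in\mathrm{TP}_{m,n}(\mathbb{F}_{q})$. For each $(I,J)\in\mathcal{I}$ the minor $\det(X'_{I,J})$ is positive, so it equals $b_{I,J}^2$ for some $b_{I,J}\in\mathbb{F}_q^{\times}$. Set $y'_{I,J}=b_{I,J}^{-1}$; then $y_{I,J}'^2\det(X'_{I,J})=1$. Collecting these values into $Y'=(y'_{I,J})_{(I,J)\in\mathcal{I}}$ gives $(X',Y')\in\mathcal{X}_k^0$.

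No step is a real obstacle. The only points needing care are the notational clean-up of $q$ versus $p^k$, and the observation that the defining equation automatically excludes zero minors, so no separate nonvanishing condition is needed.
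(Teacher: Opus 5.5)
Your proof is correct and is exactly the argument the paper intends: the paper omits this proof as analogous to \cref{lem:part psd 5 1}, and you reproduce that lemma's two directions with the pairs $(I,J)\in\mathcal{I}$ in place of $T$, reading $(y'_{I,J})^2\det(X'_{I,J})=1$ as positivity of the minor and, conversely, taking $y'_{I,J}$ to be the inverse of a square root of the minor. Reading $\mathbb{F}_{p^m}$ and $\mathbb{F}_q$ as $\mathbb{F}_{p^k}$ is the right fix for the notational slip in the statement.
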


\begin{lemma} \label{lem:part tot pos 5 2}
If $(X',Y') \in \mathcal{X}_k^0(\epsilon)$, then $(X', \sigma Y') \in \mathcal{X}_k^0$ for every $\sigma \in \{-1,1\}^{t}$. Moreover, if $(X',Y'), (X',Y_0') \in \mathcal{X}_k^0$, then there exists $\sigma \in \{-1,1\}^{t}$ such that $Y' = \sigma Y_0'$.
\end{lemma}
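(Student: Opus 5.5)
We mirror the argument of \cref{lem:part psd 5 2}, with the index set $T$ there replaced by $\mathcal{I}$ and $t=|\mathcal{I}|$. The sign vector $\sigma=(\sigma_{I,J})_{(I,J)\in\mathcal{I}}\in\{-1,1\}^{t}$ acts on $Y=(y_{I,J})$ coordinatewise by $\sigma Y=(\sigma_{I,J}y_{I,J})$. The stray $\epsilon$ in the statement plays no role here, since there is only a single system $\{g_{I,J}\}$, so we read $\mathcal{X}_k^0(\epsilon)$ as $\mathcal{X}_k^0$. The proof has two halves.

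\textbf{Forward direction.} Take $(X',Y')\in\mathcal{X}_k^0$ and any $\sigma\in\{-1,1\}^t$. For each $(I,J)\in\mathcal{I}$ we have $\sigma_{I,J}^2=1$, so
\[
g_{I,J}(X',\sigma Y')=(\sigma_{I,J}y'_{I,J})^2\det(X'[I,J])-1=(y'_{I,J})^2\det(X'[I,J])-1=0 .
\]
Hence every defining polynomial vanishes at $(X',\sigma Y')$, and so $(X',\sigma Y')\in\mathcal{X}_k^0$.

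\textbf{Converse.} Suppose $(X',Y'),(X',Y_0')\in\mathcal{X}_k^0$. Fix $(I,J)\in\mathcal{I}$ and put $d=\det(X'[I,J])$. The two equations give $(y'_{I,J})^2d=1=(y'_{0,I,J})^2d$. In particular $d\neq 0$, so we may cancel it and obtain $(y'_{I,J})^2=(y'_{0,I,J})^2$. This means $(y'_{I,J}-y'_{0,I,J})(y'_{I,J}+y'_{0,I,J})=0$ in the field $\mathbb{F}_{p^k}$, so $y'_{I,J}=\pm y'_{0,I,J}$. Choose $\sigma_{I,J}$ to be the sign that occurs; if both signs work (which would require $y'_{I,J}=0$, impossible since $(y'_{I,J})^2d=1$), either choice is fine. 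Collecting these signs gives $\sigma$ with $Y'=\sigma Y_0'$.

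\textbf{Use in the counting.} The only subtle point is that $p$ is odd, so $1\neq-1$. This makes the $2^t$ vectors $\sigma Y'$ pairwise distinct, because every $y'_{I,J}$ is nonzero. Each fiber of the projection $\mathcal{X}_k^0\to\mathrm{TP}_{m,n}$ therefore has exactly $2^{t}$ points, which is what the subsequent counting argument needs, exactly as in the proof of \cref{thm:psd 5 struc formula}. The lemma itself does not require this distinctness, so there is no real obstacle.
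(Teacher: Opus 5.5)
Your proof is correct and is the argument the paper intends: the paper omits the proof and points to the proof of \cref{lem:part psd 5 2}, which you reproduce with $T$ replaced by $\mathcal{I}$ and $t=|\mathcal{I}|$, correctly reading the stray $\mathcal{X}_k^0(\epsilon)$ as $\mathcal{X}_k^0$. You also make two points explicit that the paper leaves implicit but that its fiber count of $2^{|\mathcal{I}|}$ relies on: that $\det(X'[I,J])\neq 0$ before you cancel it, and that the $2^t$ vectors $\sigma Y'$ are pairwise distinct because $p$ is odd and every $y'_{I,J}\neq 0$.
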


We are now ready to state the structural formula for the number of totally positive matrices in $M_{m,n}(\mathbb{F}_{p^k})$ when $p$ is odd. 

\begin{theorem}\label{thm:tp struc formula}
For odd prime $p$ and positive integers $m,n$, there exists a positive integer $t$, integers $c_{1},c_{2},\dots,c_{t}$, and algebraic numbers $\gamma_{1},\gamma_{2},\dots,\gamma_{t}$ such that the number of totally positive matrices in $M_{m,n}(\mathbb{F}_q)$, where $q=p^k$, is of the form
\[
2^{-|\mathcal{I}|}\left(c_{1}\gamma_{1}^{k}+c_{2}\gamma_{2}^{k}+\cdots+c_{t}\gamma_{t}^{k}\right).
\]
\end{theorem}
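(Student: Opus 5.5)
The plan mirrors the proof of \cref{thm:psd 5 struc formula}, in a simpler setting: every minor must be positive, so there is a single algebraic set $\mathcal{X}_k^0$ and no disjoint union over sign patterns $\epsilon$. Fix $q=p^k$ and consider the projection $\pi\colon\mathcal{X}_k^0\to M_{m,n}(\mathbb{F}_{p^k})$, $(X',Y')\mapsto X'$.

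First, by \cref{lem:part tot pos 5 1}, the image of $\pi$ is exactly $\mathrm{TP}_{m,n}(\mathbb{F}_{p^k})$. If $(X',Y')\in\mathcal{X}_k^0$, then each $\det(X'_{I,J})=y_{I,J}^{-2}$ is a nonzero square. Conversely, if every minor is positive, each equation $y^2=\det(X'_{I,J})^{-1}$ is solvable.

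Second, by \cref{lem:part tot pos 5 2} (with $t=|\mathcal{I}|$), each fibre of $\pi$ over a point of $\mathrm{TP}_{m,n}$ is an orbit of the sign group $\{-1,1\}^{|\mathcal{I}|}$. This orbit has exactly $2^{|\mathcal{I}|}$ elements: each $y_{I,J}\neq 0$ and $p$ is odd, so $y_{I,J}\neq -y_{I,J}$ and the group acts freely. Hence
\[
|\mathcal{X}_k^0| = 2^{|\mathcal{I}|}\,|\mathrm{TP}_{m,n}(\mathbb{F}_{p^k})|.
\]
This freeness is the one point that needs care, and it is where oddness of $p$ enters.

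Third, the polynomials $g_{I,J}$ have coefficients in $\mathbb{F}_p$, so $|\mathcal{X}_k^0|=N_k(\{g_{I,J}\})$ is the point count over $\mathbb{F}_{p^k}$ of one fixed algebraic set defined over $\mathbb{F}_p$. By Dwork's theorem (\cref{thm: dwork thm}) together with \cref{lem:rational funcn}, there are algebraic numbers $\beta_1,\dots,\beta_{n_2}$ and $\alpha_1,\dots,\alpha_{n_1}$, independent of $k$, such that
\[
N_k=\sum_j\beta_j^k-\sum_i\alpha_i^k .
\]
Relabelling these as $\gamma_1,\dots,\gamma_t$ with coefficients $c_i=\pm1$, and merging equal $\gamma$'s so that the $c_i$ become integers, gives the claimed form after dividing by $2^{|\mathcal{I}|}$.

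The main obstacle is bookkeeping rather than substance. One must make sure the zeta function is taken for the fixed base field $\mathbb{F}_p$, so that the exponent is $k$ and the $\gamma_i$ do not depend on $k$. One must also apply the fibre count uniformly in $k$; the factor $2^{|\mathcal{I}|}$ does not depend on $k$, so this is automatic.
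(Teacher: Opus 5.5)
Your proposal is correct and takes essentially the same route as the paper: realize $\mathrm{TP}_{m,n}(\mathbb{F}_{p^k})$ as the image of the projection of $\mathcal{X}_k^0$, show each fibre has exactly $2^{|\mathcal{I}|}$ points, and apply Dwork's rationality via \cref{lem:rational funcn} to $N_k(\{g_{I,J}\})$ over the fixed base field $\mathbb{F}_p$. You also check explicitly that the sign group acts freely, since $y_{I,J}\neq 0$ and $p$ is odd; the paper leaves this implicit when it asserts that each equivalence class has cardinality $2^{|\mathcal{I}|}$.
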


\begin{proof} 
We can define a equivalence relation $\sim$ on the set $\mathcal{X}_k^0$ where we define $(X',Y') \sim (X_0',Y_0')$ if $X' = X_0'$.  By \cref{lem:part psd 5 1}, each equivalence class corresponds to a unique element of $\mathrm{TP}_{m,n}(\mathbb{F}_{p^k})$, and by \cref{lem:part psd 5 2}, each equivalence class has cardinality $2^{|\mathcal{I}|}$. Hence,
\begin{equation}\label{eq:rel tp count to X0_k}
|\mathcal{X}_k^0| = 2^{|\mathcal{I}|}\cdot \left|\mathrm{TP}_{m,n}(\mathbb{F}_{p^k})\right|.
\end{equation} 
Using \cref{lem:rational funcn} on 
\[
\{g_{I,J}\mid (I,J)\in\mathcal{I}\},
\]
we obtain algebraic numbers $\alpha_1,\alpha_2,\dots,\alpha_{n_1},\beta_1,\beta_2,\dots,\beta_{n_2}$ such that 
\[
\left|\mathcal{X}_k^0\right|=\sum_{j=1}^{n_2}\beta_j^k -\sum_{i=1}^{n_1}\alpha_i^k
\]
Plugging this into \eqref{eq:rel tp count to X0_k}, we will get the desire form.
\end{proof}

By reasoning similar to that in \cref{rem:when psd 5 polyn} and the formulas for the number of totally positive matrices for small orders given in \cref{sec:totpos enum}, we make the following conjecture.

\begin{conjecture}
For a prime $p$ and positive integers $m, n$, there exists a positive integer $t$ and rational polynomials $g_0(x),\dots,g_{t-1}(x)\in \mathbb{Q}[x]$ of degree $m n$, such that the number of totally positive matrices in $M_{m,n}(\mathbb{F}_{p^k})$ is $g_i(p^k)$ when $k\equiv i\pmod{t}$.
\end{conjecture}

\section*{Acknowledgements}
We thank J. Cooper, S. Damase, D. Guillot and A. Khare for discussions.
We acknowledge support from the DST FIST Program 2021 TPN–700661. 
The second named author (SP) wishes to thank the Council of Scientific and Industrial Research (CSIR), New Delhi, for the award of a Junior Research Fellowship [File No. 09/0079(19932)/2024-EMR-I].

\bibliographystyle{alpha}
\bibliography{main}

\appendix

\section{A new characterization for positive semidefinite matrices}
\label{sec:new psd}
We now prove the equivalence of \cref{thm:psd RC}(a) and (f). We begin by recalling the necessary lemmas.

\begin{lemma}[{\cite[Theorem 1 and Theorem 3 of Section 20]{Bocher1922}}]\label{lem:bocher thm}
Let $\mathbb{F}$ be a field, and let $A\in M_n(\mathbb{F})$ be a symmetric
matrix. Suppose there exists a subset $K\subseteq[n]$ of size $r$ such that
$A_K$ is nonsingular. If, for every subset $M$ satisfying
$K\subseteq M\subseteq[n]$ and $|M|\in\{r+1,r+2\}$, the principal submatrix
$A_M$ is singular, then $A$ has rank $r$. Conversely, if rank of a symmetric matrix is $r>0$, then there exist atleast one $K\subseteq[n]$ of size $r$ such that $A_K$ is nonsingular.
\end{lemma}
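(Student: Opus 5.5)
We plan to reduce to the case $K=[r]$ and then read everything off a Schur complement. Choose a permutation matrix $P_\pi$ that sends the indices of $K$ to $[r]$, and replace $A$ by $P_\pi^T A P_\pi$. This is again symmetric and has the same rank. Its principal submatrices are, up to relabeling, the principal submatrices of $A$, so all the hypotheses carry over. Then write
\[
A=\begin{pmatrix} P & B\\ B^T & C\end{pmatrix},
\]
where $P=A_{[r]}$ is invertible, $B\in M_{r,n-r}(\mathbb{F})$ and $C$ is symmetric. Set $S=C-B^TP^{-1}B$, the Schur complement, which is symmetric.

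The first key step is the block factorization
\[
\begin{pmatrix} \iden_r & \mathbf{0}\\ -B^TP^{-1} & \iden_{n-r}\end{pmatrix} A \begin{pmatrix} \iden_r & -P^{-1}B\\ \mathbf{0} & \iden_{n-r}\end{pmatrix}=\begin{pmatrix} P & \mathbf{0}\\ \mathbf{0} & S\end{pmatrix}.
\]
It gives $\operatorname{rank}A=r+\operatorname{rank}S$. The same identity applies to every principal submatrix $A_M$ with $[r]\subseteq M$: the Schur complement of $P$ in $A_M$ is exactly the principal submatrix $S_{M\setminus[r]}$. Hence $\det A_M=\det P\cdot\det S_{M\setminus[r]}$.

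Taking $M=[r]\cup\{i\}$, the hypothesis $\det A_M=0$ gives $S_{ii}=0$ for every $i$. Taking $M=[r]\cup\{i,j\}$ with $i\ne j$, we get $0=\det P\,(S_{ii}S_{jj}-S_{ij}^2)=-\det P\,S_{ij}^2$, hence $S_{ij}=0$. So $S=\mathbf{0}$ and $\operatorname{rank}A=r$. This argument works in every characteristic, including characteristic two, because we never divide by $2$; symmetry of $S$ is only used so that $S_{ij}=S_{ji}$ in the $2\times 2$ determinant.

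For the converse, let $\operatorname{rank}A=r>0$. Pick a set $I$ of $r$ indices whose rows are linearly independent; these rows then span the row space. So $A=XA_{I,[n]}$ for some $X\in M_{n,r}(\mathbb{F})$, and restricting to the columns in $I$ gives $A_{[n],I}=XA_{I,I}$. By symmetry $A_{[n],I}=(A_{I,[n]})^T$, which has rank $r$. Therefore $r\le\operatorname{rank}A_{I,I}\le r$, so $A_I$ is nonsingular and $K=I$ works.

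The only mildly delicate point is justifying that the Schur complement of $A_M$ is $S_{M\setminus[r]}$. This follows by applying the displayed factorization to $A_M$ itself, since $P$, $B$ and $C$ restrict compatibly. I expect no real obstacle.
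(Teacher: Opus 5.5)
Your proof is correct in both directions, over an arbitrary field. The paper gives no argument of its own for this lemma; it only cites Theorems~1 and~3 of Section~20 of B\^ocher. So your proposal supplies a self-contained proof rather than competing with one in the text.

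For comparison, a classical determinantal proof of the first part goes through bordered minors. Let $\Delta_{ij}$ denote the minor of order $r+1$ obtained by adjoining row $i$ and column $j$ to $A_K$. Sylvester's identity gives $\det A_K\cdot\det A_{K\cup\{i,j\}}=\Delta_{ii}\Delta_{jj}-\Delta_{ij}\Delta_{ji}$. The hypotheses together with symmetry then force $\Delta_{ij}^2=0$ for all $i,j\notin K$. The bordering theorem for rank (a nonzero minor all of whose bordered minors vanish determines the rank) then yields $\operatorname{rank}A=r$.

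Your Schur complement is exactly this data normalized, since $\Delta_{ij}=\det A_K\cdot S_{ij}$. Your single block factorization gives both $\operatorname{rank}A=r+\operatorname{rank}S$ and $\det A_M=\det A_K\cdot\det S_{M\setminus K}$. It therefore replaces both Sylvester's identity and the bordering theorem. It also makes plain that nothing depends on the characteristic, which matters because the paper applies the lemma over $\mathbb{F}_q$ with $q$ even.

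Your converse argument, from $A_{[n],I}=XA_{I,I}$ with $A_{[n],I}$ of rank $r$, is precisely the fact invoked in \cref{sec:psd 1}. There the paper concludes that $A_M$ is nonsingular once $A_{M,[n]}$ and $A_{[n],M}$ have full rank.
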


\begin{lemma}[Cauchy interlacing theorem {\cite[Theorem 4.3.17]{HornJohnson2012}}]\label{thm:cauchy int thm} Let $B\in M_{n}(\mathbb{C})$ be a Hermitian matrix partitioned as
\[
B=
\left(
\begin{array}{c|c}
A & v\\
\hline
\\[-1em]
v^T &a_2
\end{array}
\right),
\]
where  
$A \in M_{n-1}(\mathbb{C}), v \in \mathbb{C}^{n-1}$ and $a_2\in\mathbb{C}$. Then
\[
\lambda_1(B)\leq \lambda_1(A)\leq \lambda_2(B)\leq\dots\leq\lambda_{n-1}(B)\leq \lambda_{n-1}(A)\leq \lambda_{n}(B).
\]
\end{lemma}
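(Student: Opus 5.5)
The plan is to derive both families of inequalities from the Courant--Fischer min--max characterization of eigenvalues of a Hermitian matrix. Throughout, the off-diagonal block $v^T$ is read as $v^*$, so that $B$ is Hermitian, and eigenvalues are listed in nondecreasing order. Write $R_B(x)=x^*Bx/x^*x$ for the Rayleigh quotient of $B$ on nonzero $x\in\mathbb{C}^n$, and define $R_A$ on $\mathbb{C}^{n-1}$ in the same way.

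First I will record Courant--Fischer in the form
\[
\lambda_k(B)=\min_{\substack{S\subseteq\mathbb{C}^n\\ \dim S=k}}\ \max_{0\neq x\in S} R_B(x).
\]
It follows from the spectral theorem (\cref{thm:posdef RC}(e) and its Hermitian analogue in \cite{HornJohnson2012}). Write $x$ in an orthonormal eigenbasis $u_1,\dots,u_n$ of $B$. Taking $S=\operatorname{span}(u_1,\dots,u_k)$ gives the value $\lambda_k(B)$. Conversely, any $k$-dimensional $S$ meets $\operatorname{span}(u_k,\dots,u_n)$ nontrivially, and on that intersection $R_B\ge\lambda_k(B)$.

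Next I embed $\mathbb{C}^{n-1}$ into $\mathbb{C}^n$ as $W=\{(y,0)\}$. For $y\neq 0$ one has $R_B((y,0))=R_A(y)$.

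The first family of inequalities, $\lambda_k(B)\le\lambda_k(A)$ for $k\in[n-1]$, comes from minimizing over fewer subspaces. The $k$-dimensional subspaces of $W$ are among the $k$-dimensional subspaces of $\mathbb{C}^n$, so
\[
\lambda_k(B)\le\min_{\substack{S\subseteq W\\ \dim S=k}}\ \max_{0\neq x\in S}R_B(x)=\lambda_k(A).
\]

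The second family, $\lambda_k(A)\le\lambda_{k+1}(B)$, uses dimension counting. Let $S$ be any $(k+1)$-dimensional subspace of $\mathbb{C}^n$. Since $\dim W=n-1$, we get $\dim(S\cap W)\ge k$. Choose a $k$-dimensional subspace $S'\subseteq S\cap W$. Then
\[
\max_{0\neq x\in S}R_B(x)\ \ge\ \max_{0\neq x\in S'}R_B(x)\ \ge\ \lambda_k(A).
\]
Taking the minimum over $S$ gives $\lambda_{k+1}(B)\ge\lambda_k(A)$.

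Combining the two families yields the full interlacing chain. The only step needing real care is the proof of the Courant--Fischer identity itself, specifically the intersection argument showing that every $k$-dimensional subspace meets $\operatorname{span}(u_k,\dots,u_n)$. The interlacing inequalities are then immediate dimension counts.
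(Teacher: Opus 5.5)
Your argument is correct: Courant--Fischer, together with $R_B((y,0))=R_A(y)$ and your two dimension counts, gives $\lambda_k(B)\le\lambda_k(A)\le\lambda_{k+1}(B)$ for every $k\in[n-1]$, which is the whole chain. The paper gives no proof of its own and only cites Horn--Johnson, where interlacing is likewise derived from the min--max principle, so your route is essentially the standard argument being invoked. Reading the block $v^T$ as $v^*$ is also harmless: for Hermitian $B$, the literal $v^T$ simply forces $v$ to be real.
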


\begin{proof}[Proof of equivalence of \cref{thm:psd RC}(a) and (f)] 
Suppose $A$ is a psd matrix of rank $r$. If $r=0$, there is nothing to prove. Suppose $r>0$. By \cref{lem:bocher thm}, there exists a subset $K\subseteq[n]$ of size $r$ such that $A_K$ is nonsingular. Since $A$ is positive semidefinite of rank $r$, it has exactly $r$ positive eigenvalues, while $0$ is an eigenvalue of multiplicity $n-r$. By \cref{thm:cauchy int thm}, every eigenvalue of $A_K$ is nonnegative. As $A_K$ is nonsingular, none of its eigenvalues is zero. Hence all eigenvalues of $A_K$ are positive, and therefore $A_K$ is positive definite.

Now let $M\subseteq[n]$ satisfy $|M|\in \{r+1,r+2\}$. Since $\operatorname{rank}(A)=r$, every principal submatrix of $A$ of order greater than $r$ is singular. In particular, $A_M$ is singular.

Conversely, suppose that $A$ satisfies \cref{thm:psd RC}(f). By \cref{lem:bocher thm}, $\operatorname{rank}(A) = r$. Let $0 < \lambda_1 \le \dots \le \lambda_r$ be the eigenvalues of $A_K$. By recursively applying \cref{thm:cauchy int thm}, $A$ has at least $r$ positive eigenvalues. Since $\operatorname{rank}(A)=r$, the eigenvalue $0$ has multiplicity $n-r$. Hence, all eigenvalues of $A$ are nonnegative, and therefore $A$ is positive semidefinite of rank $r$.
\end{proof}

\begin{proof}[Proof of \cref{thm:psd princ submatrix}]
We will show that $A$ satisfies \cref{thm:psd RC}(f) if and only if there exists a subset $K\subseteq[n]$ such that $A_K$ is positive definite and every principal submatrix $A_M$ is singular for all subsets $M\subseteq[n]$ satisfying $K\subseteq M$ and $|M|>|K|$.

If $A$ satisfies \cref{thm:psd RC}(f), then $A_K$ is positive definite. By \cref{lem:bocher thm}, the rank of $A$ is $|K|$. Hence every principal submatrix $A_M$ with $|M|>|K|$ is singular. The converse is trivial.
\end{proof}

\end{document}